\theoremstyle{plain}
\newtheorem{thm}{Theorem}[section]
\newtheorem{lem}[thm]{Lemma}
\newtheorem{pro}[thm]{Proposition}
\newtheorem{cor}[thm]{Corollary}
\newtheorem*{con*}{Conjecture}
\theoremstyle{remark}\theoremstyle{remark}
\newtheorem*{rem*}{Remark}
\newtheorem*{qun*}{Question}
\newtheorem{con}[thm]{Conjecture}
\newtheorem*{acknowledgements}{Acknowledgements}
\numberwithin{equation}{section}
\numberwithin{table}{section}
\newcommand{\N}{\mathbb{N}}
\newcommand{\Z}{\mathbb{Z}}
\newcommand{\Q}{\mathbb{Q}}
\newcommand{\F}{\mathbb{F}}
\newcommand{\C}{\mathbb{C}}
\newcommand{\R}{\mathbb{R}}
\newcommand{\spl}{\mathfrak{sl}}
\newcommand{\gl}{\mathfrak{gl}}
\newcommand{\mfb}{\mathfrak{b}}
\newcommand{\mfg}{\mathfrak{g}}
\newcommand{\mfn}{\mathfrak{n}}
\newcommand{\mfh}{\mathfrak{h}}
\newcommand{\lat}{\Lambda}
\newcommand{\mfp}{\mathfrak{p}}
\newcommand{\mfP}{\mathfrak{P}}
\newcommand{\lri}{\mathfrak{o}}
\newcommand{\Lri}{\mathfrak{O}}
\newcommand{\lfi}{\mathfrak{k}}
\newcommand{\Lfi}{\mathfrak{K}}
\newcommand{\ldr}{\mathfrak{R}}
\newcommand{\Ldr}{\mathfrak{D}}
\newcommand{\caseA}{A}
\newcommand{\caseB}{C}
\newcommand{\caseC}{B}
\newcommand{\caseD}{E}
\newcommand{\caseE}{F}
\newcommand{\caseF}{D}
\newcommand{\caseG}{G}
\newcommand{\caseH}{K}
\newcommand{\caseI}{H}
\newcommand{\caseJ}{I}
\newcommand{\caseK}{J}
\newcommand{\Anzahl}{a}
\newcommand{\Mass}{m}
\newcommand{\Menge}{A}
\newcommand{\gri}{\ensuremath{{\scriptstyle \mathcal{O}}}}
\newcommand{\smallgri}{\ensuremath{{\scriptscriptstyle \mathcal{O}}}}
\renewcommand{\epsilon}{\varepsilon}
\renewcommand{\phi}{\varphi}
\renewcommand{\theta}{\vartheta}
\DeclareMathOperator{\ad}{ad}
\DeclareMathOperator{\Irr}{Irr}
\DeclareMathOperator{\Rad}{Rad}
\DeclareMathOperator{\rk}{rk}
\DeclareMathOperator{\SL}{SL}
\DeclareMathOperator{\GL}{GL}
\DeclareMathOperator{\PGL}{PGL}
\DeclareMathOperator{\Aut}{Aut}
\DeclareMathOperator{\Hom}{Hom}
\DeclareMathOperator{\Mat}{Mat}
\DeclareMathOperator{\res}{res}
\DeclareMathOperator{\Cen}{C}
\DeclareMathOperator{\Tr}{Tr}
\DeclareMathOperator{\real}{Re}
\DeclareMathOperator{\Id}{Id}
\newcommand{\dotcup}{\ensuremath{\mathaccent\cdot\cup}}
\newcommand{\bigdotcup}{\ensuremath{\mathop{\dot{\bigcup}}}}
\def \bfa {{\bf a}}
\def \bfb {{\bf b}}
\def \bfy {{\bf y}}
\def \mcR {\ensuremath{\mathcal{R}}}
\def \mcZ {\ensuremath{\mathcal{Z}}}
\def \Zp  {\mathbb{Z}_p}
\newcommand{\Qp}{\mathbb{Q}_p}
\begin{document}
\title[Representation zeta functions of some $p$-adic analytic
  groups]{Representation zeta functions of some compact $p$-adic
  analytic groups}

% acknowledgements given at the end of the paper
%
%\thanks{\textit{Acknowledgements.} The authors thank the
%  Mathematisches Forschungsinstitut Oberwolfach for its hospitality
%  and support during a two-week visit under the Research in Pairs
%  programme in March 2007. They also acknowledge support of the
%  Nuffield Foundation in form of a Newly Appointed Science Lecturer's
%  grant, held by Voll.}

\author{Nir Avni}\thanks{Avni was supported by NSF grant DMS-0901638.}
\address{Department of Mathematics, Harvard University, One Oxford
  Street, Cambridge MA 02138, USA} \email{avni.nir@gmail.com}

\author{Benjamin Klopsch} \address{Department of Mathematics, Royal
  Holloway, University of London, Egham TW20 0EX, United Kingdom}
\email{Benjamin.Klopsch@rhul.ac.uk}

\author{Uri Onn} \address{Department of Mathematics, Ben Gurion
  University of the Negev, Beer-Sheva 84105 Israel}
  \email{urionn@math.bgu.ac.il}

\author{Christopher Voll} \address{School of Mathematics, University
  of Southampton, University Road, Southampton SO17 1BJ, United
  Kingdom}
\email{C.Voll.98@cantab.net}

\begin{abstract}
  Using the Kirillov orbit method, novel methods from $\mfp$-adic
  integration and Clifford theory, we study representation zeta
  functions associated to compact $p$\nobreakdash-adic analytic
  groups.  In particular, we give general estimates for the abscissae
  of convergence of such zeta functions.

  We compute explicit formulae for the representation zeta functions
  of some compact $p$-adic analytic groups, defined over a compact
  discrete valuation ring $\lri$ of characteristic~$0$.  These include
  principal congruence subgroups of $\SL_2(\lri)$, without any
  restrictions on the residue field characteristic of $\lri$, as well
  as the norm one group $\SL_1(\Ldr)$ of a non-split quaternion
  algebra $\Ldr$ over the field of fractions of $\lri$ and its
  principal congruence subgroups.  We also determine the
  representation zeta functions of principal congruence subgroups of
  $\SL_3(\lri)$ in the case that $\lri$ has residue field
  characteristic $3$ and is unramified over $\Z_3$.
\end{abstract}

%\thanks{This file is called \boxed{\rm{\jobname}} \hfill \textbf{Date
%    of draft version: \today}}

\keywords{Representation growth, $p$-adic analytic group, Igusa local
  zeta function, $\mfp$-adic integration, Kirillov orbit method}

%\subjclass[2000]{22E50, 22E55, 20F69, 22E40, 11M41, 20C15, 20G25}
\subjclass[2000]{22E50, 20F69, 11M41, 20C15, 20G25}

\maketitle

% set counter to {1} to avoid listing of subsections:
%\setcounter{tocdepth}{2}
%\tableofcontents

%%%%%

\section{Introduction}\label{sec:introduction}

Let $G$ be a group.  For $n\in\N$, we denote by $r_n(G)$ the number of
isomorphism classes of $n$-dimensional irreducible complex
representations of~$G$.  If $G$ is a topological group, we tacitly
assume that representations are continuous. We call $G$
(representation) \emph{rigid} if, for every $n\in\N$, the number
$r_n(G)$ is finite.  It is known that a profinite group is rigid if
and only if it is FAb, i.e.\ if each of its open subgroups has finite
abelianisation.  All the groups studied in this paper are rigid.

We say that $G$ has \emph{polynomial representation growth} if the
sequence $R_N(G):=\sum_{n=1}^N r_n(G)$ is bounded by a polynomial in
$N$. The \emph{representation zeta function} of a group $G$ with
polynomial representation growth is the Dirichlet series
$$
\zeta_G(s) := \sum_{n=1}^\infty r_n(G) n^{-s},
$$ 
where $s$ is a complex variable.  The \emph{abscissa of convergence}
$\alpha(G)$ of $\zeta_G(s)$, i.e.\ the infimum of all $\alpha\in\R$
such that $\zeta_G(s)$ converges on the complex right half-plane $\{s
\in \C \mid \real (s)> \alpha \}$, gives the precise degree of
polynomial growth: $\alpha(G)$ is the smallest $\alpha$ such that
$R_n(G) = O(1+n^{\alpha+\epsilon})$ for every $\epsilon \in \R_{>0}$.
Representation zeta functions of groups have been studied in a variety
of contexts, e.g.\ in the setting of arithmetic and $p$-adic analytic
groups, wreath products of finite groups and finitely generated
nilpotent groups; cf.\ \cite{Ja06, LaLu08, AvKlOnVo10, AvKlOnVo10+,
  AvKlOnVo09}, \cite{BadlHa10} and \cite{HrMa07, Vo10}. In the case of
finitely generated nilpotent groups, one enumerates representations up
to twisting by one-dimensional representations.

\smallskip

We state our main results, deferring the precise definitions of some
of the technical terms involved as well as further corollaries and
remarks to later sections.  In our first theorem we provide general
estimates for the abscissae of convergence of representation zeta
functions of FAb compact $p$-adic analytic groups.  These are
expressed in terms of invariants $\rho$ and $\sigma$ which are defined
by minimal and maximal centraliser dimensions in the corresponding Lie
algebras; see~\eqref{equ:sigmarho_centraliser}. The term `permissible'
is defined in Section~\ref{subsec:basic_set_up}; here it suffices to
note that, for a given Lie lattice $\mfg$, all sufficiently large
$m\in\N$ are permissible for~$\mfg$.
 
\begin{thm}\label{thm:abscissa_estimate}
  Let $\lri$ be a compact discrete valuation ring of characteristic
  $0$, with maximal ideal $\mfp$ and field of fractions $\lfi$.  Let
  $\mfg$ be an $\lri$-Lie lattice such that $\lfi \otimes_\lri \mfg$
  is a perfect Lie algebra.  Let $d := \dim_\lfi (\lfi \otimes_\lri
  \mfg)$, and let $\sigma := \sigma(\mfg)$ and $\rho := \rho(\mfg)$, as
  defined in~\eqref{equ:sigmarho_centraliser}.  Let $m \in \N$ be
  permissible for $\mfg$, and let $\mathsf{G}^m := \exp(\mfp^m \mfg)$.

  Then lower and upper bounds for the abscissa of convergence of
  $\zeta_{\mathsf{G}^m}(s)$ are given by
  $$
  (d - 2\rho) \rho^{-1} \leq \alpha(\mathsf{G}^m) \leq (d-2\sigma)
  \sigma^{-1}.
  $$
\end{thm}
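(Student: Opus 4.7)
Our approach is the Kirillov orbit method. Under the permissibility hypothesis on $m$, the exponential map $\exp\colon\mfp^m\mfg\to\mathsf{G}^m$ is a bijection, endowing $\mathsf{G}^m$ with the structure of a saturable uniform pro-$p$ group; a form of the Kirillov correspondence then puts $\Irr(\mathsf{G}^m)$ in bijection with the set of $\mathsf{G}^m$-coadjoint orbits on the Pontryagin dual of $\mfp^m\mfg$. Fixing a level-one additive character of $\lfi$, this dual is identified with $\bigcup_{k\geq 0}\mfg^\vee/\mfp^k\mfg^\vee$ (where $\mfg^\vee$ denotes the $\lri$-dual lattice), so that representations of level $k$ correspond to $\mathsf{G}^m$-orbits of elements $\omega\in\mfg^\vee$ of exact level $k$. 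The key formulae are $\dim(\chi)^2 = |\mathsf{G}^m\cdot\omega|$ and $|\mathsf{G}^m\cdot\omega| = q^{k(d-c(\omega))}$ at level $k$, where $q:=|\lri/\mfp|$ and $c(\omega):=\dim_\lfi\Cen_{\lfi\otimes\mfg}(\omega)$ is the centraliser dimension.

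\textbf{Upper bound.} Splitting the zeta function as $\zeta_{\mathsf{G}^m}(s) = \sum_k\zeta^{(k)}(s)$ by level, and stratifying $\mfg^\vee$ by the value of $c(\omega)\in[\sigma,\rho]$, I would bound the level-$k$ contribution from each stratum using the orbit-size formula. The Plancherel-type identity $\sum_{\chi\in\Irr(\mathsf{G}^m/\mathsf{G}^{m+k})}\dim(\chi)^2 = q^{kd}$, combined with careful control of representation dimensions via the extremal centraliser dimensions $\sigma$ and $\rho$, produces a uniform geometric estimate at each level. Summing the resulting geometric series in $k$ yields convergence of $\zeta_{\mathsf{G}^m}(s)$ for $s>(d-2\sigma)/\sigma$.

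\textbf{Lower bound.} For the matching divergence when $s<(d-2\rho)/\rho$, the idea is to isolate the contribution of the extremal stratum on which $c(\omega)=\rho$ (orbits of smallest size, and hence representations of smallest dimension $q^{k(d-\rho)/2}$). An explicit count of orbits in this stratum at each level $k$ produces, via the divergence test for Dirichlet series, the stated lower bound.

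\textbf{Main obstacle.} The principal technical difficulties lie in the rigorous deployment of the Kirillov correspondence under the permissibility hypothesis—in particular, verifying the dimension formula across all levels $k$—and in the $p$-adic volume estimates on the centraliser strata of $\mfg^\vee$. These rely on $p$-adic integration techniques and on structural properties of the perfect Lie algebra $\mfg$, where the perfectness assumption enters decisively. For the lower bound one must further produce enough orbits at the extremal centraliser stratum, a delicate geometric issue about the structure of the coadjoint representation that I expect to be the hardest step to make rigorous in the stated generality.
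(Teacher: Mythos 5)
Your overall strategy --- the Kirillov orbit correspondence for the saturable group $\mathsf{G}^m$ together with control of co-adjoint orbit sizes through extremal centraliser dimensions --- is the strategy of the paper, and your upper bound is essentially the paper's argument in disguise: bounding the level-$k$ contribution by $q^{kd}\cdot(\min_\chi\dim\chi)^{-(s+2)}$ is the same as summing $\lvert\mfg^m:\Rad(\omega)\rvert^{-(s+2)/2}$ over all functionals $\omega$ of level $k$ after replacing each index by its minimum. The one point you gloss over there is uniformity: your ``key formula'' $\lvert\mathsf{G}^m\cdot\omega\rvert=q^{k(d-c(\omega))}$ is not an equality but only an asymptotic, and for the upper bound you need $\lvert\mfg^m:\Rad(\omega)\rvert\geq q^{2\sigma k-c}$ with a constant $c$ \emph{uniform in $\omega$ and $k$}. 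The definition of $\sigma$ via the rank of $\mathcal{R}(\mathbf{y})$ over $\lfi$ does not give this by itself; the paper obtains it by observing that the map sending $\mathbf{y}\in W(\lri)$ to the first $\sigma$ elementary divisors of $\mathcal{R}(\mathbf{y})$ is locally constant, hence of finite image by compactness of $W(\lri)$. This is a small but genuinely necessary step that your plan does not supply.

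The real gap is in your lower bound. First, your stratification is garbled: the representations that force divergence at $s<(d-2\rho)\rho^{-1}$ are those attached to the \emph{largest} orbits (minimal centraliser dimension $d-2\rho$, representation dimension $q^{k\rho}$), not the smallest ones, and the dimension $q^{k(d-\rho)/2}$ you write is not correct in general (e.g.\ for $\spl_3$). Second, and more importantly, the step you correctly identify as hardest --- producing enough orbits in the extremal stratum at every level --- is simply not needed. In the paper the zeta function is written as a sum over \emph{functionals} rather than orbits, $\zeta_{\mathsf{G}^m}(s)=\sum_\omega\lvert\mfg^m:\Rad(\omega)\rvert^{-(s+2)/2}$, and the definition of $\rho$ gives the universal inequality $\lvert\mfg^m:\Rad(\omega)\rvert\leq q^{2\rho n}$ for \emph{every} $\omega$ of level $n$, with no regularity hypothesis. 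Replacing each term by the smaller quantity $q^{-\rho n(s+2)}$ and using the exact count $\lvert\Irr_n(\mfg^m)\rvert=(1-q^{-d})q^{dn}$ yields a termwise minorant that is an explicit geometric series with abscissa exactly $(d-2\rho)\rho^{-1}$. No density or volume estimate for the regular stratum enters. If you insist on your route you would have to show that the locus $\{\mathbf{y}\in W(\lri):\rk_\lfi\mathcal{R}(\mathbf{y})=2\rho\}$ is open of positive measure and then invoke the same compactness argument as in the upper bound to control orbit sizes on it uniformly --- feasible, but strictly more work than the argument above, and not something your proposal carries out.
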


Our other main results provide explicit formulae for the
representation zeta functions of members of specific families of
`simple' compact $p$-adic analytic groups.  The groups are defined
over a compact discrete valuation ring $\lri$ of characteristic $0$
and residue field characteristic $p$, with absolute ramification index
$e(\lri,\Zp)$. In the unramified case $e(\lri,\Zp)=1$ all $m \in \N$
are permissible for a given $\lri$-Lie lattice~$\mfg$.

\begin{thm}\label{thm:SL2}
  Let $\lri$ be a compact discrete valuation ring of characteristic
  $0$.  Then for all $m \in \N$ which are permissible for the Lie
  lattice $\spl_2(\lri)$ the representation zeta function of the
  principal congruence subgroup $\SL_2^m(\lri)$ is
  $$
  \zeta_{\SL_2^m(\lri)}(s) =
  \begin{cases}
    q^{3m} \frac{1 - q^{-2-s}}{1 - q^{1-s}} & \text{if $p>2$,} \\
    q^{3m} \frac{q^2 - q^{-s}}{1 - q^{1-s}} & \text{if $p=2$ and
      $e(\lri,\Z_2) = 1$}.
  \end{cases}
  $$
\end{thm}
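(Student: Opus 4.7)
My plan is to invoke the Kirillov orbit method for the pro-$p$ group $\mathsf{G}^m := \SL_2^m(\lri) = \exp(\mfp^m \spl_2(\lri))$. Since $m$ is permissible for $\spl_2(\lri)$, the $\mfp$-adic exponential and logarithm furnish a $\mathsf{G}^m$-equivariant homeomorphism between $\mathsf{G}^m$ and the Lie lattice $\mfp^m \spl_2(\lri)$, intertwining conjugation with the adjoint action. This yields a bijection between continuous irreducible complex representations of $\mathsf{G}^m$ and $\mathsf{G}^m$-coadjoint orbits on $\Irr(\mfp^m \spl_2(\lri))$, the Pontryagin dual of the additive group of the Lie lattice, with the dimension of a representation equal to $|\Omega|^{1/2}$ for the corresponding orbit $\Omega$. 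Fixing an additive character $\psi \colon \lfi/\lri \to \C^\times$ of conductor $\lri$ and identifying $\Irr(\mfp^m\spl_2(\lri))$ with $\spl_2(\lri)/\mfp^m \spl_2(\lri)$ via $X \mapsto \psi(\Tr(X\,\cdot))$ and the trace form, I obtain
$$
\zeta_{\mathsf{G}^m}(s) = \sum_{\Omega} |\Omega|^{-s/2},
$$
which I would convert to a $\mfp$-adic integral over $\spl_2(\lri)$ by averaging over a truncated fundamental domain.

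Next I would analyse the adjoint orbits of $\SL_2(\lri)$ on $\spl_2(\lri)$, working with representatives shifted by $\mfp^m$. When the residue characteristic is odd, the trace form is non-degenerate and the orbit of $X$ is controlled essentially by the invariant $D(X) := \det X = -\Tr(X^2)/2$, a non-degenerate quadratic form in three variables; the centraliser of a non-nilpotent $X$ is a rank-one torus whose type (split, unramified non-split, or ramified) is determined by whether $D(X)$ is a unit square, a unit non-square, or has odd valuation. Stratifying $\mfp^m \spl_2(\lri)$ by $v := v(D(X)) \geq 2m$, I would compute on each stratum the measure of elements together with the corresponding orbit length; summing the resulting geometric series in $v$ produces the factor $(1 - q^{1-s})^{-1}$, and the numerator $1 - q^{-2-s}$ arises from the combined boundary correction at the nilpotent locus $D(X) = 0$ and the discrepancy between the regular and the central strata.

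The main obstacle is the residue characteristic $2$ case, where $D$ becomes degenerate modulo $\mfp$: the nilpotent cone acquires extra singularities, the parametrisation of non-split tori changes, and the Kirillov map itself needs a more careful set-up (convergence of the exponential series and its compatibility with the Baker--Campbell--Hausdorff formula are more delicate, forcing a larger minimal permissible $m$). Under the assumption $e(\lri,\Z_2) = 1$ the extra ramification is minimal, so the same stratification applies with careful bookkeeping of the additional factor of $2$ in the discriminant, in the parity distinction between split and non-split tori, and in the measure of each stratum. These corrections replace the numerator $1 - q^{-2-s}$ by $q^2 - q^{-s}$, while the denominator $1 - q^{1-s}$, coming from the generic geometric series, persists.
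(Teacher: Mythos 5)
Your starting point --- the Kirillov orbit method and the identity $\zeta_{\mathsf{G}^m}(s)=\sum_{\Omega}\lvert\Omega\rvert^{-s/2}$ --- coincides with the paper's, but the computational plan you build on it has a genuine gap. First, the Pontryagin dual of $\mfp^m\spl_2(\lri)$ is not the finite quotient $\spl_2(\lri)/\mfp^m\spl_2(\lri)$: it is the infinite discrete group $\bigdotcup_{n\in\N_0}\Hom_\lri(\mfg^m,\lri/\mfp^n)^*$, graded by the level $n$, and it is the sum over $n$ that produces the geometric series $(1-q^{1-s})^{-1}$; discarding the level parameter removes the very source of the infinite Dirichlet series. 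Second, and more seriously, the size of the $\mathsf{G}^m$-coadjoint orbit of a level-$n$ functional $\omega$ is $\lvert\mfg^m:\Rad(\omega)\rvert$, which by Lemma~\ref{central_lem_neu} is read off from the elementary divisors of the antisymmetric commutator matrix $\mathcal{R}(\mathbf{y})$ (entries $y_3$, $2y_1$, $2y_2$) reduced modulo $\mfp^{n-m}$ --- not from $v(\det X)$ nor from the splitting type of the centraliser torus. For $p>2$ every primitive $\mathbf{y}$ makes $\mathcal{R}(\mathbf{y})$ of unit rank $2$, so every level-$n$ character has the same degree whether the corresponding $X$ is split semisimple, non-split, ramified or nilpotent; your stratification by $v(\det X)$ is therefore vacuous for the congruence subgroup, and the numerator $1-q^{-2-s}$ arises simply from the count $q^{3n}(1-q^{-3})$ of primitive functionals at each level, with no special contribution from the nilpotent locus. (The torus types do matter, but only for the later Clifford-theoretic passage to the full group $\SL_2(\lri)$.)

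For $p=2$ the degeneration you must track is again that of $\mathcal{R}(\mathbf{y})$: its largest $2\times 2$ minor has absolute value $\max\{\lvert y_3\rvert,\lvert 2y_1\rvert,\lvert 2y_2\rvert\}^2$, so the correct stratification is by $\min\{v(y_3),e\}$ capped by the level, and this does not coincide with $v(\det X)$ --- for instance $y_3\in\mfp$ with $y_1,y_2$ units gives a unit determinant but a degenerate commutator matrix modulo $\mfp$. Replacing your determinant strata by these elementary-divisor strata (this is exactly the decomposition $W(\lri)=W(\lri)^{[0]}\dotcup\cdots\dotcup W(\lri)^{[e]}$ used in the paper) and restoring the sum over levels would repair the argument; for $e=1$ a two-term version of the same geometric series then yields the numerator $q^2-q^{-s}$.
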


\begin{thm} \label{thm:quaternions} Let $\Ldr$ be a non-split
  quaternion algebra over a $\mfp$-adic field $\lfi$. Let $\ldr$
  denote the maximal compact subring of $\Ldr$, and suppose that $2
  e(\lfi,\Q_p) < p-1$, where $p$ denotes the residue field
  characteristic of $\lfi$.

  Then the representation zeta functions of $\SL_1(\Ldr) =
  \SL_1(\ldr)$ and its principal congruence subgroups $\SL_1^m(\ldr)$,
  $m \in \N$, are
  \begin{align*}
    \zeta_{\SL_1(\ldr)}(s) & = \frac{(q+1) (1 -
      q^{-s}) + 4 (q-1) ((q+1)/2)^{-s}}{1-q^{1-s}}, \\
    \zeta_{\SL_1^m(\ldr)}(s) & = q^{3m} \frac{q -
      q^{-1-s}}{1-q^{1-s}}.
  \end{align*}
\end{thm}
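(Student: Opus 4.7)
The proof falls into two parts corresponding to the two displayed formulas, which I would tackle in sequence. For $m\geq 1$, set $\mfg:=\spl_1(\ldr)$, the free $\lri$-Lie lattice of rank $3$ of reduced-trace-zero elements of $\ldr$. The hypothesis $2e(\lfi,\Q_p)<p-1$ guarantees that every $m\in\N$ is permissible for $\mfg$ and that $\exp$ induces an analytic bijection $\mfp^m\mfg\to\SL_1^m(\ldr)$, so that $\SL_1^m(\ldr)$ is a uniform pro-$p$ group. The Kirillov orbit method then puts $\Irr(\SL_1^m(\ldr))$ in bijection with the coadjoint orbits on the Pontryagin dual $(\mfp^m\mfg)^{\vee}$, the dimension of the representation attached to an orbit $\Omega$ being $|\Omega|^{1/2}$. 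Using the adjoint-invariant reduced trace pairing $(x,y)\mapsto\Tr_{\Ldr/\lfi}(xy)$ to identify $(\mfp^m\mfg)^{\vee}$ with a translate of $\mfg/\mfp^m\mfg$, the zeta function becomes $\sum_{\Omega}|\Omega|^{-s/2}$ over the resulting finite set of orbits.

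The decisive geometric input is that $\lfi\otimes\mfg$, the Lie algebra of the anisotropic inner form of $\spl_2$, contains no non-zero nilpotent elements: every non-zero $x$ generates a quadratic subfield $\lfi(x)\subset\Ldr$ whose unit torus is the centraliser of $x$. I would stratify orbits by the $\mfp$-adic valuation $k$ of a primitive representative, compute for each $k\geq 0$ the number of orbits and a single orbit's size, and observe that each level contributes a constant multiple of $q^{k(1-s)}$. Summing the resulting geometric series gives the denominator $1-q^{1-s}$; the numerator $q-q^{-1-s}$ combines the boundary strata, and the prefactor $q^{3m}$ records the normalisation $[\mfg:\mfp^m\mfg]=q^{3m}$ inherent in the Pontryagin duality. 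Equivalently, the calculation may be phrased as a single regular-semisimple $\mfp$-adic orbit integral over $\mfg$, whose evaluation is unobstructed by nilpotent contributions.

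For the full group $\SL_1(\ldr)$ I would invoke Clifford theory for the open normal subgroup $\SL_1^1(\ldr)$. The finite quotient $Q:=\SL_1(\ldr)/\SL_1^1(\ldr)$ has order coprime to $p$ and acts on $\Irr(\SL_1^1(\ldr))$ by permuting Kirillov orbits; each $\chi\in\Irr(\SL_1(\ldr))$ is obtained by inducing an extension of a $Q$-stable orbit representation from its $Q$-stabiliser. Combining the Kirillov count from the first step with the $Q$-action yields three kinds of contributions: the $q+1$ one-dimensional characters coming from the abelianisation $\SL_1(\ldr)\twoheadrightarrow C_{q+1}$ (via the reduced norm to the norm-one subgroup of the residue skew field); a generic contribution from freely permuted orbits, furnishing the dominant $(q+1)(1-q^{-s})/(1-q^{1-s})$ summand; and an exceptional contribution $4(q-1)((q+1)/2)^{-s}/(1-q^{1-s})$ from low-level orbits whose $Q$-stabiliser has index two, yielding Clifford constituents of dimension exactly $(q+1)/2$. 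The main technical obstacle lies in this last step: enumerating the index-two-stabilised orbits, resolving the projective-representation extension problem for them, and matching the coefficient $4(q-1)$ to the structure of the quadratic subfields of $\Ldr$. The Kirillov geometric-series computation for the congruence subgroups, by comparison, is largely formal.
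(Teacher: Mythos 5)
Your overall architecture matches the paper's: the Kirillov orbit method for $\SL_1^m(\ldr)$ (the paper does this via the commutator matrix of $\spl_1(\ldr)$ and the integral of Proposition~\ref{pro:zeta=poincare}, which amounts to your stratification), and then Clifford theory over $N=\SL_1^1(\ldr)$ with $G/N\cong C_{q+1}$ for the full group. The first part is essentially right, though note that within a fixed level $n$ there are \emph{two} orbit sizes, according to whether the coefficient $y_1$ of the element in the unramified direction is a unit or not; your $\mfP$-adic (rather than $\mfp$-adic) stratification can capture this, and it is exactly this dichotomy that produces the two terms of the numerator.

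The Clifford-theoretic part, however, contains a genuine error: you have the inertia-group structure backwards. The dominant term $(q+1)(1-q^{-s})/(1-q^{1-s})$ does \emph{not} come from freely permuted orbits; it comes from characters $\theta\in\Irr(N)$ whose inertia group is \emph{all} of $G$ (these are the type-(i) characters, with $y_1\in\lri^*$), so that $\theta$ extends and twisting by the $q+1$ linear characters of $G/N$ yields $q+1$ characters of $G$ of the \emph{same} degree $q^n$ — which is the only way degrees $q^n$ with multiplicity divisible by $q+1$ can arise. Freely permuted orbits would instead produce degrees $(q+1)q^n$ and a factor $(q+1)^{-s}$ that is absent from the formula. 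Dually, the exceptional orbits (type (ii), $y_1\in\mfp$, occurring at \emph{every} level, not just low levels) have inertia group $\{\pm1\}N$, i.e.\ stabiliser of \emph{order} two and index $(q+1)/2$ in $G$ — not index two. An index-two stabiliser cannot yield constituents of dimension $(q+1)/2$ over a $\theta$ of $q$-power degree, so your description is internally inconsistent, and following it would give the wrong answer. Finally, the actual content of the proof is precisely the determination of these two inertia groups, which the paper carries out by (i) conjugating a type-(i) element under $N$ into the unramified quadratic subfield $\lfi(\mathbf{i})$, whence the torus complement $H$ centralises it, and (ii) observing that $H$ acts on $\mfg_1/\mfg_2$ by $h\mapsto h^{1-q}$ with kernel $\{\pm1\}$; your proposal flags this as "the main technical obstacle" but does not supply it, and the structure you propose to verify there is not the correct one.
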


\begin{thm}
  \label{thm:SL3_p=3}
  Let $\lri$ be a compact discrete valuation ring of
  characteristic~$0$, with residue field of cardinality~$q$ and
  characteristic $3$.  Suppose that $\lri$ is unramified over $\Z_3$.
  Then, for all $m \in \N$, one has
  \begin{equation*}
    \zeta_{\SL_3^m(\lri)}(s) = q^{8m-4} \frac{(q^2-q^{-s}) (q^2
      + q^{-s} + (q^4-1)q^{-2s} - q^{1-3s} + (q^4 - q^2 - q)
      q^{-4s})}{(1 - q^{1-2s})(1 - q^{2-3s})}. 
  \end{equation*}
\end{thm}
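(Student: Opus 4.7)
The plan is to invoke the general framework developed earlier in the paper, which expresses the representation zeta function of a principal congruence subgroup as an Igusa-type $\mfp$-adic integral via the Kirillov orbit method. Since $\lri$ is unramified over $\Z_3$ one has $e(\lri,\Z_3) = 1$, so the convergence condition $m > e(\lri,\Z_3)/(p-1) = 1/2$ is satisfied for every $m \in \N$, making all such $m$ permissible for $\spl_3(\lri)$. Consequently $\SL_3^m(\lri) = \exp(\mfp^m \spl_3(\lri))$, and Howe's orbit method yields a bijection between isomorphism classes of irreducible complex representations of $\SL_3^m(\lri)$ and coadjoint orbits on the Pontryagin dual of $\mfp^m \spl_3(\lri)$, with $\dim \pi = \lvert \Omega_\pi \rvert^{1/2}$. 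Summing $(\dim \pi)^{-s}$ over orbits and collecting the expected $q^{m \dim \spl_3} = q^{8m}$ level factor reduces the statement to the explicit evaluation of an Igusa-type integral over $\spl_3(\lri) \times \spl_3(\lri)$.

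Next I would transport this integral onto $\spl_3(\lri)$ itself by identifying $\spl_3^*$ with $\spl_3$ via the trace form $\langle x, y \rangle = \Tr(xy)$, while paying close attention to the fact that in residue characteristic $3$ this form degenerates modulo $\mfp$, since scalar matrices reduce into $\spl_3 \bmod \mfp$. This degeneration produces an explicit level shift that must be tracked carefully. I would then stratify $\spl_3(\lri)$ according to the Jordan type of an element's reduction modulo successive powers of $\mfp$, separating the regular semisimple, subregular and nilpotent loci. The denominator factors $(1 - q^{1-2s})(1 - q^{2-3s})$ in the claimed formula strongly suggest organising the integral as a fibration over the adjoint quotient map $\spl_3 \to \A^2$, $x \mapsto (\Tr(x^2), \Tr(x^3))$, whose coordinates have degrees $2$ and $3$, matching the fundamental degrees of the Weyl group $S_3$; the contribution of each fibre would then be evaluated using the $\mfp$-adic valuation of the discriminant of the characteristic polynomial together with the local Galois type of its splitting field.

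The main obstacle is that $p = 3$ is simultaneously the Coxeter number of $A_2$ and a divisor of $\lvert W \rvert = 6$, which places $\SL_3(\lri)$ in residue characteristic $3$ on the boundary of the regime in which standard Lie-theoretic techniques apply. Springer's isomorphism between the nilpotent cone and the unipotent variety, together with the uniform description of centraliser dimensions in terms of partition types, becomes delicate in this characteristic, and combined with the trace-form degeneration noted above this means the integrand on the nilpotent locus does not reduce to the generic formula. Non-regular semisimple elements whose characteristic polynomial has discriminant with positive $3$-adic valuation will therefore require separate case analysis, which I expect to manifest as more intricate combinatorial bookkeeping of strata than in the generic-characteristic case; this is already reflected in the somewhat surprising coefficient $(q^4 - q^2 - q)$ of $q^{-4s}$ appearing in the stated numerator.
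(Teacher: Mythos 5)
Your setup agrees with the paper's: permissibility of every $m$ in the unramified case, the Kirillov orbit method, the reduction to an Igusa-type $\mfp$-adic integral with prefactor $q^{8m}$, the transfer from the co-adjoint to the adjoint picture via the trace form (which is the normalised Killing form $\kappa_0$ for $\spl_3$), and the observation that this form degenerates in residue characteristic $3$. In the paper this degeneration is made precise: the pre-image of $\Hom_\lri(\spl_3(\lri),\lri)$ under $\iota_0$ is a lattice $\lat$ with $\lvert \lat : \spl_3(\lri)\rvert = q$, a Jacobian factor $\lvert 3\rvert_\mfp = q^{-1}$ appears, and the integral splits into two summands, the second supported on cosets of elements $u(p^{-1}\Id + \mathbf{x})$ with $\Tr(\mathbf{x})=-1$; this is rather more than a ``level shift'', but your description is compatible with it.

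The gap is in the evaluation itself. The one concrete mechanism you propose --- fibering over the adjoint quotient $x\mapsto(\Tr(x^2),\Tr(x^3))$ and reading the integrand off the valuation of the discriminant and the Galois type of the splitting field --- cannot determine the integrand on the locus that actually produces the non-generic terms. The integrand is governed by the elementary divisors of the commutator matrix $\mathcal{R}(\mathbf{y})$, equivalently by the centraliser indices $\lvert \spl_3(\lri):\Cen_{\spl_3(\lri)}(\mathbf{a}+p^{n+1}\lat)\rvert$ for all $n$, and on the non-regular-semisimple locus these are not functions of the characteristic polynomial: the zero matrix, the rank-one nilpotent and the regular nilpotent all have characteristic polynomial $X^3$ but have centralisers of different dimensions, so the fibres of the adjoint quotient mix strata on which the integrand takes different values. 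This is exactly where the paper has to work: it classifies the $\GL_3(\F_q)$-orbits on $\spl_3(\F_q)$ and on the trace $-1$ slice of $\gl_3(\F_q)$ in characteristic $3$ (Tables~\ref{table_p=3} and \ref{table_p=3_tr-1}), identifies which cosets modulo $p\lat$ are irregular, and then, for each irregular type, carries out an inductive normal-form and centraliser computation determining the number $\Anzahl^{[i]}_n$ of lifts modulo $p^{n+1}\lat$ whose centraliser index is exactly $q^{4(n+1)}$ (the three propositions of Section~\ref{sec:sl3_Z3}, in particular Proposition~\ref{pro:schwierigster_fall}). These lifting counts are what produce the integrals $\mathcal{Z}_\lri^{[1]},\mathcal{Z}_\lri^{[2]},\mathcal{Z}_\lri^{[3]}$ and ultimately the coefficient $q^4-q^2-q$ you single out; announcing that ``separate case analysis'' will be needed does not supply them, and without them the stated formula cannot be obtained.
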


The four theorems are proved and discussed in
Sections~\ref{subsec:abscissa_estimates}, \ref{sec:expl_form_sl2},
\ref{sec:quaternion_case} and \ref{sec:sl3_Z3} respectively.  Whilst
all of the results in the current paper are of a local nature, they
might be best appreciated in the `global' context of representation
zeta functions of arithmetic groups, as explained to some extent below
and exposed, in much greater detail, in~\cite{AvKlOnVo10, AvKlOnVo10+,
  AvKlOnVo09}. The main technical tools of these papers are the
Kirillov orbit method, novel techniques from $\mfp$-adic integration
and Clifford theory. The current paper is closely related to these
works, complements them in parts and provides concrete examples of the
general methods developed in these papers. We tried, however, to keep
it reasonably self-contained, and hope that it might help the reader
appreciate these articles and their interconnections.

\smallskip

Let $\Gamma$ be an arithmetic subgroup of a connected, simply
connected semisimple algebraic group $\mathbf{G}$ defined over a
number field $k$, and assume that $\Gamma$ has the Congruence Subgroup
Property.  Relevant examples are groups of the form $\Gamma =
\SL_n(\gri)$, where $\gri$ is the ring of integers in a number
field~$k$, and $n\geq 3$.  The Congruence Subgroup Property and
Margulis super-rigidity imply that the `global' representation zeta
function $\zeta_\Gamma(s)$ of $\Gamma$ is an Euler product of `local'
representation zeta functions, indexed by places of~$k$; see
\cite[Proposition~1.3]{LaLu08}.  For example, if $\Gamma=\SL_n(\gri)$
with $n\geq 3$, we have
\begin{equation}\label{eqn:euler}
  \zeta_{\SL_n(\smallgri)}(s) = \zeta_{\SL_n(\C)}(s)^{|k:\Q|} \cdot
  \prod_{v} \zeta_{\SL_n(\smallgri_v)}(s)
\end{equation}
where each archimedean factor $\zeta_{\SL_n(\C)}(s)$ enumerates the
finite-dimensional, irreducible rational representations of the
algebraic group $\SL_n(\C)$ and, for every non-archimedean place $v$
of $k$, we denote by $\gri_v$ the completion of $\gri$ at~$v$, which
is a finite extension of the $p$\nobreakdash-adic integers $\Z_p$ if
$v$ prolongs $p$. In general, the local factors indexed by
non-archimedean places are representation zeta functions of FAb
compact $p$-adic analytic groups. In \cite{AvKlOnVo09} we provide
explicit, uniform formulae for the zeta functions of groups of the
form $\SL_3(\lri)$, in the case that $p > \max\{3,e\}$, where
$e=e(\lri,\Zp)$ denotes the absolute ramification index of~$\lri$.  In
the Euler product \eqref{eqn:euler} this condition is satisfied by all
but finitely many of the rings $\lri = \gri_v$. It is a natural,
interesting question to describe the local factors at the finitely
many `exceptional' places $v$ of~$k$, and in particular in the
non-generic case $p=3$.

In ~\cite{AvKlOnVo10+} we develop general methods to describe
representation zeta functions of certain `globally defined' FAb
compact $p$-adic analytic pro-$p$ groups. These include the principal
congruence subgroups of compact $p$-adic analytic groups featuring in
Euler products of representation zeta functions of `semisimple'
arithmetic groups, such as \eqref{eqn:euler}.  In particular, we
obtain there formulae for the zeta functions of principal congruence
subgroups of the form $\SL_3^m(\lri)$, provided the residue field
characteristic of $\lri$ is different from $3$ and $m \in \N$ is
permissible for the $\lri$-Lie lattice $\spl_3(\lri)$,
cf.~\cite[Theorem~E]{AvKlOnVo10+}. 

In Theorem~\ref{thm:SL3_p=3} of the current paper we complement
\cite[Theorem~E]{AvKlOnVo10+} (or, equivalently, the analogous result
in \cite{AvKlOnVo09}) by providing a formula for the representation
zeta functions of groups of the form $\SL_3^m(\lri)$, where $\lri$ is
an unramified extension of $\Z_3$ and $m\in\N$.  The formula differs
from the `generic' formula in~\cite[Theorem~E]{AvKlOnVo10+}, valid for
residue field characteristic $p\neq 3$, and is obtained by
computations akin to the algebraic approach developed
in~\cite{AvKlOnVo09}.  It is noteworthy that the `generic' formula
only depends on the residue field of the local ring $\lri$,
irrespective of ramification.  Whilst it is clear that this does not
hold in the case of residue field characteristic $3$, we do not know
how sensitive the zeta function is to ramification in this case.  It
is, for instance, an interesting open problem whether, for rings
$\lri_1$ and $\lri_2$ of residue field characteristic $3$ which have
the same inertia degree and ramification index over $\Z_3$, the
representation zeta functions of the groups $\SL_3(\lri_1)$ and
$\SL_3(\lri_2)$ are the same, or at least those of $\SL_3^m(\lri_1)$
and $\SL_3^m(\lri_2)$ for permissible $m \in \N$.

A phenomenon of the latter kind is exhibited in Theorem~\ref{thm:SL2},
in which we record formulae for zeta functions of groups of the form
$\SL_2^m(\lri)$, where $m\in\N$ is permissible for the Lie
lattice~$\spl_2(\lri)$.  The formula for $\zeta_{\SL_2^m(\lri)}(s)$ in
the generic case $p>2$ only depends on $m$ and on $q$, the residue
field cardinality of $\lri$.  The expression in the case $p=2$, on the
other hand, is sensitive to ramification, albeit only to the absolute
ramification index $e=e(\lri,\Z_2)$ of $\lri$.  In
Section~\ref{sec:full_sl_2} we employ Clifford theory to compute the
representation zeta function $\zeta_{\SL_2(\lri)}(s)$ in the case that
$p\geq e-2$. This reproduces, in the given case, a formula first
computed in~\cite[Theorem~7.5]{Ja06} for the zeta functions of groups
of the form $\SL_2(R)$, where $R$ is an arbitrary complete discrete
valuation ring with finite residue field of odd characteristic. We
record it here as it illustrates our broader and conceptually
different approach. We note that this formula, too, only depends on
the residue field cardinality and not, for instance, on
ramification. It is a challenge to establish analogous formulae in
residue field characteristic~$2$.  We record a formula for the
representation zeta functions of $\SL_2(\Z_2)$, based
on~\cite{NoWo76}, and a conjectural formula for its first principal
congruence subgroup.

Our methods also allow for explicit calculations for representation
zeta functions of norm one groups $\SL_1(\Ldr)$ of central division
algebras $\Ldr$ of Schur index $\ell$, say, over $\mfp$-adic
fields~$\lfi$, and their principal congruence subgroups.  Results
obtained by Larsen and Lubotzky in~\cite{LaLu08} suggest that such
`anisotropic' groups may actually be more tractable than their
`isotropic' counterparts.  In the case that $\ell$ is prime, we
compute the abscissa of convergence of $\zeta_{\SL_1(\Ldr)}(s)$ in
terms of Lie-theoretic data associated to the Lie algebra
$\spl_1(\Ldr)$; cf.\ Corollary~\ref{cor:abscissa_skew}.  This result,
which was first proved in~\cite{LaLu08} for general $\ell$, is an easy
application of some general estimates for the abscissae of convergence
of representation zeta functions of groups to which our methods are
applicable; cf. Theorem~\ref{thm:abscissa_estimate}.  Of special
interest is the case $\ell=2$, where $\Ldr$ is a non-split quaternion
algebra over $\lfi$ with maximal compact subring $\ldr$, say.  In this
situation we give, in Theorem~\ref{thm:quaternions}, formulae for the
representation zeta functions of the groups $\SL_1(\Ldr) =
\SL_1(\ldr)$ and $\SL_1^m(\ldr)$, $m \in \N$, which hold if $p$ is
large compared to the ramification index $e(\lfi,\Qp)$.

\smallskip 

\noindent \textit{Organisation.} The paper is organised as follows. In
Section~\ref{sec:p-adic_integral} we briefly recall the geometric
method, developed in~\cite{AvKlOnVo10+}, to describe representation
zeta functions of certain compact $p$-adic analytic pro-$p$ groups.
We show how it yields lower and upper bounds for abscissae of
convergence as in Theorem~\ref{thm:abscissa_estimate}.  In
Section~\ref{sec:sl2} we compute representation zeta functions
associated to groups of the form $\SL_2(\lri)$ and its principal
congruence subgroups, thereby proving Theorem~\ref{thm:SL2}.  Results
on representation zeta functions of subgroups of norm one groups of
central division algebras and, in particular,
Theorem~\ref{thm:quaternions} are obtained in
Section~\ref{sec:skew_fields}.  The computations for principal
subgroups of groups of the form $\SL_2(\lri)$ in residue field
characteristic $p=3$, resulting in Theorem~\ref{thm:SL3_p=3}, are
carried out in Section~\ref{sec:sl3_Z3}.

\smallskip

\noindent \textit{Notation.}  Our notation is the same as the one used
in~\cite{AvKlOnVo10+}.  Non-standard terms are briefly defined at
their first occurrence in the text.  Zeta function will always refer
to representation zeta function.  Throughout this paper, $\lri$
denotes a compact discrete valuation ring of characteristic $0$ and
residue field cardinality $q$, a power of a prime~$p$.  We write $F^*$
to denote the multiplicative group of a field $F$ and extend this
notation as follows.  For a non-trivial $\lri$-module $M$ we write
$M^* := M \setminus \mfp M$ and set $\{0\}^*=\{0\}$.

%%%%%%%%%

\section{Zeta functions as $\mfp$-adic integrals}
\label{sec:p-adic_integral}

Let $\lri$ be a compact discrete valuation ring of characteristic $0$,
with maximal ideal~$\mfp$. The residue field $\lri/\mfp$ is a finite
field of characteristic~$p$ and cardinality $q$, say.  Let $\lfi$ be
the field of fractions of~$\lri$.

\subsection{Integral formula} \label{subsec:basic_set_up} Let $\mfg$
be an $\lri$-Lie lattice such that $\lfi \otimes_{\lri} \mfg$ is
perfect.  In accordance with \cite[Section~2.1]{AvKlOnVo10+}, we call
$m \in \N_0$ permissible for $\mfg$ if the principal congruence Lie
sublattice $\mfg^m = \mfp^m \mfg$ is potent and saturable.  Almost all
non-negative integers $m$ are permissible for $\mfg$; see
\cite[Proposition~2.3]{AvKlOnVo10+}.  A key property of a potent and
saturable $\lri$-Lie lattice $\mfh$ is that the Kirillov orbit method
can be used to study the set $\Irr(H)$ of irreducible complex
characters of the $p$-adic analytic pro-$p$ group $H = \exp(\mfh)$,
which is associated to $\mfh$ via the Hausdorff series; see
\cite{Go09}.

Let $m \in \N_0$ be permissible for $\mfg$ and consider $\mathsf{G}^m
:= \exp(\mfg^m)$.  Then the orbit method provides a correspondence
between the elements of $\Irr(\mathsf{G}^m)$ and the co-adjoint orbits
of $\mathsf{G}^m$ on the Pontryagin dual $\Irr(\mfg^m) =
\Hom(\mfg^m,\C^*)$ of the compact abelian group~$\mfg^m$.  The radical
of $\omega \in \Irr(\mfg^m)$ is $\Rad(\omega) := \{ x \in \mfg^m \mid
\forall y \in \mfg^m : \omega([x,y]_\text{Lie}) = 1 \}$.  The degree
of the irreducible complex character represented by the co-adjoint
orbit of $\omega$ is equal to $\lvert \mfg^m : \Rad(\omega)
\rvert^{1/2}$, and the size of the co-adjoint orbit of $\omega$ is
equal to $\lvert \mfg^m : \Rad(\omega) \rvert$.  This shows that the
zeta function of $\mathsf{G}^m$ satisfies
\begin{equation} \label{Andrei's_formula} \zeta_{\mathsf{G}^m}(s) =
  \sum_{\omega \in \Irr(\mfg^m)} \lvert \mfg^m : \Rad(\omega)
  \rvert^{-(s+2)/2}.
\end{equation}
According to \cite[Lemma~2.4]{AvKlOnVo10+}, the Pontryagin dual of the
$\lri$-Lie lattice $\mfg^m$ admits a natural decomposition
$$
\Irr(\mfg^m) = \bigdotcup\nolimits_{n \in \N_0} \Irr_n(\mfg^m),
  \qquad \text{where } \Irr_n(\mfg^m) \cong
  \Hom_{\lri}(\mfg^m,\lri/\mfp^n)^*.
$$
Moreover, for each $n \in \N_0$ there is a natural projection of
$\lri$-modules $\Hom_\lri(\mfg^m,\lri) \rightarrow
\Hom_\lri(\mfg^m,\lri/\mfp^n)$, mapping $\Hom_\lri(\mfg^m,\lri)^*$
onto $\Hom_\lri(\mfg^m,\lri/\mfp^n)^*$.  We say that $\omega \in
\Irr_n(\mfg^m)$ has \emph{level} $n$ and that $w \in
\Hom_\lri(\mfg^m,\lri)^*$ is a representative of $\omega$ if $w$ maps
onto the appropriate element of $\Hom_\lri(\mfg^m,\lri/\mfp^n)^*$.

Let $\mathbf{b} := (b_1,\ldots,b_d)$ be an $\lri$-basis for the
$\lri$-Lie lattice $\mfg$, where $d = \dim_\lfi(\lfi \otimes_\lri
\mfg)$.  The structure constants $\lambda_{ij}^h$ of the $\lri$-Lie
lattice $\mfg$ with respect to $\mathbf{b}$ are encoded in the
commutator matrix
\begin{equation} \label{equ:commutator_matrix} \mathcal{R}(\mathbf{Y})
  := \mathcal{R}_{\mfg,\mathbf{b}}(\mathbf{Y}) = \left( \sum_{h=1}^d
    \lambda_{ij}^h Y_h \right)_{ij} \in \Mat_d(\lri[\mathbf{Y}] ),
\end{equation}
whose entries are linear forms in independent variables
$Y_1,\ldots,Y_d$.  We write $W(\lri) := \left( \lri^d \right)^* \cong
\Hom_\lri(\mfg^m,\lri)^*$ and set
\begin{equation*}
  \sigma(\mfg) := \min \left\{ \tfrac{1}{2} \rk_\lfi
    \mathcal{R}(\mathbf{y}) \mid \mathbf{y} \in
    W(\lri) \right\} \quad \text{and}
  \quad \rho(\mfg) := \max \left\{
    \tfrac{1}{2} \rk_{\lfi} \mathcal{R}(\mathbf{y}) \mid \mathbf{y} \in
    W(\lri) \right\}.
\end{equation*}
Note that this definition is independent of the choice of basis for
$\mfg$, and that both $\sigma(\mfg)$ and $\rho(\mfg)$ are integers,
because $\mcR(\mathbf{Y})$ is anti-symmetric.

The relevance of the commutator matrix in connection with
\eqref{Andrei's_formula} stems from \cite[Lemma~3.3]{AvKlOnVo10+},
which we record here as follows.

\begin{lem}\label{central_lem_neu}
  Let $m$ be permissible for $\mfg$, let $n \in \N$ and suppose that
  $\omega \in \Irr_n(\mfg^m)$ is represented by $w \in
  \Hom_\lri(\mfg^m,\lri)^*$.  Let $\pi$ denote a uniformiser
  for~$\lri$.  

  Then for every $z \in \mfg^m$ we have
  $$
  z \in \Rad(\omega) \quad \Longleftrightarrow \quad \underline{z}
  \cdot \mathcal{R}(\underline{w}) \equiv_{\mfp^{n-m}} 0,
  $$ where $\underline{z}$ and $\underline{w}$ denote the coordinate
  tuples of $z$ and $w$ with respect to the shifted $\lri$-basis
  $\pi^m\mathbf{b}$ for $\mfg^m$ and its dual
  $\pi^{-m}\mathbf{b}^\vee$ for $\Hom_\lri(\mfg^m,\lri)$. 
\end{lem}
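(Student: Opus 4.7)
The plan is to unwind the Kirillov-orbit identification $\Irr_n(\mfg^m) \cong \Hom_\lri(\mfg^m,\lri/\mfp^n)^*$, rewrite the condition defining the radical as an $\lri$-linear congruence on $w$, and then translate into coordinates using the definition of $\mathcal{R}(\mathbf{Y})$ from \eqref{equ:commutator_matrix}.  The only technical care required is the bookkeeping of the powers of $\pi$ arising from the shift from $\mfg$ to $\mfg^m$ on the domain side and the dual shift on the codomain side.

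First I fix a faithful additive character $\psi_n \colon \lri/\mfp^n \to \C^*$ with trivial kernel, under which $\omega$ factors as $\psi_n \circ \bar{w}$, where $\bar{w} \colon \mfg^m \to \lri/\mfp^n$ is the reduction of $w$ modulo~$\mfp^n$.  Then $\omega(x) = 1$ if and only if $w(x) \in \mfp^n$, so that $z \in \Rad(\omega)$ becomes the assertion that $w([z,y]_\text{Lie}) \in \mfp^n$ for every $y \in \mfg^m$.

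Next I expand in the bases prescribed in the statement.  Writing $z = \sum_{i=1}^d z_i \pi^m b_i$ and $y = \sum_{j=1}^d y_j \pi^m b_j$ with $\underline{z},\underline{y} \in \lri^d$, bilinearity of the Lie bracket together with $[b_i,b_j]_\text{Lie} = \sum_h \lambda_{ij}^h b_h$ gives $[z,y]_\text{Lie} = \pi^{2m} \sum_{i,j,h} z_i y_j \lambda_{ij}^h b_h$.  Because the dual basis to $\pi^m \mathbf{b}$ in $\Hom_\lri(\mfg^m,\lri)$ is $\pi^{-m}\mathbf{b}^\vee$, the coordinates $\underline{w} = (w_1,\ldots,w_d)$ of $w$ satisfy $w(b_h) = \pi^{-m} w_h$ after $\lfi$-linear extension.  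Substituting and invoking the definition of the commutator matrix yields
\[
w([z,y]_\text{Lie}) \;=\; \pi^m \sum_{i,j} z_i y_j \Bigl( \sum_h \lambda_{ij}^h w_h \Bigr) \;=\; \pi^m\, \underline{z}\, \mathcal{R}(\underline{w})\, \underline{y}^T.
\]

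Finally, the condition that this lies in $\mfp^n$ for all $y \in \mfg^m$ is equivalent to $\underline{z}\, \mathcal{R}(\underline{w})\, \underline{y}^T \in \mfp^{n-m}$ for all $\underline{y} \in \lri^d$, and specialising $\underline{y}$ to the standard basis vectors of $\lri^d$ converts this into the componentwise congruence $\underline{z} \cdot \mathcal{R}(\underline{w}) \equiv 0 \pmod{\mfp^{n-m}}$, as desired.  The main (admittedly modest) obstacle is keeping straight the two compensating factors of $\pi^{\pm m}$ that lower the modulus from $\mfp^n$ to $\mfp^{n-m}$; this is precisely the reason the statement is phrased in terms of the shifted basis $\pi^m\mathbf{b}$ and its dual $\pi^{-m}\mathbf{b}^\vee$.
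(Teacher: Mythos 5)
The paper itself does not prove this lemma --- it is quoted verbatim from \cite[Lemma~3.3]{AvKlOnVo10+} --- so there is no in-text argument to compare against; your reconstruction follows the route one would expect (unwind the duality, expand in the shifted bases, absorb the compensating powers of $\pi^{\pm m}$), and the coordinate computation in the second and third paragraphs is correct, including the bookkeeping that lowers the modulus from $\mfp^n$ to $\mfp^{n-m}$.

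There is, however, one step that is stated incorrectly and needs repair. A character $\psi_n \colon \lri/\mfp^n \to \C^*$ with trivial kernel does not exist unless $\lri/\mfp^n$ is cyclic as an abelian group (i.e.\ essentially unless $\lri = \Z_p$): for an unramified extension of degree $f$ one has $\lri/\mfp^n \cong (\Z/p^n)^f$ additively, which admits no injective homomorphism into $\C^*$. Consequently your pointwise claim ``$\omega(x)=1$ if and only if $w(x)\in\mfp^n$'' is false in general: one can have $w(x)$ a unit with $\psi_n(\overline{w(x)})=1$. What saves the argument is that you never need the pointwise statement, only the quantified one. The correct choice is a character $\psi_n$ whose kernel contains no nonzero $\lri$-submodule of $\lri/\mfp^n$ (equivalently, $\psi_n$ is nontrivial on $\mfp^{n-1}/\mfp^n$); this is exactly the condition under which $w \mapsto \psi_n\circ \bar w$ identifies $\Hom_\lri(\mfg^m,\lri/\mfp^n)^*$ with $\Irr_n(\mfg^m)$. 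Since $y \mapsto w([z,y]_{\text{Lie}})$ is $\lri$-linear, the set of values $\overline{w([z,\mfg^m]_{\text{Lie}})}$ is an $\lri$-submodule of $\lri/\mfp^n$, and such a submodule lies in $\ker\psi_n$ if and only if it is zero. This yields precisely the equivalence you use --- $z\in\Rad(\omega)$ iff $w([z,y]_{\text{Lie}})\in\mfp^n$ for all $y\in\mfg^m$ --- after which the rest of your proof goes through unchanged.
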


By this lemma, the index $\lvert \mfg^m : \Rad(\omega) \rvert$ can be
expressed in terms of the elementary divisors of the matrix $
\mathcal{R}(\underline{w})$ which in turn one computes from its
minors.  The zeta function of the group $\mathsf{G}^m$, associated to
the principal congruence Lie sublattice $\mfg^m$, can thus be regarded
as a Poincar\'e series encoding the numbers of solutions of a certain
system of equations modulo $\mfp^n$ for all $n \in \N_0$.  Such
Poincar\'e series can be expressed as generalised Igusa zeta
functions, which are certain types of $\mfp$-adic integrals over the
compact space $\mfp \times W(\lri)$; cf.~\cite{AvKlOnVo10+} and
\cite{De91,Ig00}.

For $j \in \{1,\ldots,\rho(\mfg)\}$ and $\mathbf{y} \in W(\lri)$ we
define
\begin{align*}
  F_j(\mathbf{Y}) & = \{ f \mid f \text{ a $2j
    \times 2j$ minor of $\mathcal{R}(\mathbf{Y})$} \}, \\
  \lVert F(\mathbf{y}) \rVert_\mfp & = \max \{ \lvert f(\mathbf{y})
  \rvert_\mfp \mid f \in F \}.
\end{align*}
It is worth pointing out that the sets $F_j(\mathbf{Y})$ may be
replaced by sets of polynomials defining the same polynomial ideals.
Specifically, one could define $F_j(\mathbf{Y})$ to be the set of all
principal $2j \times 2j$ minors; see~\cite[Remark~3.16]{AvKlOnVo10+}.
It is the geometry of the varieties defined by the polynomials in
$F_j(\mathbf{Y})$ which largely determines the zeta function of
$\mathsf{G}^m$.  Of particular interest are `effective' resolutions of
their singularities; cf.\ \cite{AvKlOnVo10+}.  If $\lfi \otimes_\lri
\mfg$ is a semisimple Lie algebra, then the varieties defined by the
polynomials in $F_j(\mathbf{Y})$ admit a Lie-theoretic interpretation:
they yield a stratification of the Lie algebra defined in terms of
centraliser dimensions; cf.\ \cite[Section~5]{AvKlOnVo10+}.  We state
the integral formula derived in \cite[Section~3.2]{AvKlOnVo10+}.

\begin{pro}\label{pro:zeta=poincare}
  Let $\mfg$ be an $\lri$-Lie lattice such that $\lfi \otimes_{\lri}
  \mfg$ is a perfect $\lfi$-Lie algebra of dimension $d$.  Then for
  every $m \in \N_0$ which is permissible for $\mfg$ one has
  $$
  \zeta_{\mathsf{G}^m}(s) = q^{dm}
  \left(1+(1-q^{-1})^{-1} \mcZ_\lri(-s/2-1,\rho(s+2)-d-1) \right),
  $$
  with $\rho = \rho(\mfg)$ and
  \begin{equation} \label{equ:integral_neu} \mathcal{Z}_\lri(r,t) =
    \int_{(x,\mathbf{y}) \in \mfp \times W(\lri)} \lvert x
    \rvert_\mfp^t \prod_{1 \leq j \leq \rho(\mfg)} \frac{\lVert
      F_j(\mathbf{y}) \cup F_{j-1}(\mathbf{y}) x^2
      \rVert_\mfp^r}{\lVert F_{j-1}(\mathbf{y}) \rVert_\mfp^r} \,
    d\mu(x,\mathbf{y}),
  \end{equation}
  where $\mfp \times W(\lri) \subseteq \lri^{d+1}$ and the additive
  Haar measure $\mu$ is normalised to $\mu(\lri^{d+1}) = 1$.
\end{pro}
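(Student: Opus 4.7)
The plan is to expand the orbit-theoretic sum \eqref{Andrei's_formula} into a $\mfp$-adic integral in three stages: decompose by level, translate the index $\lvert \mfg^m : \Rad(\omega) \rvert$ into valuations of minors of the commutator matrix, and introduce an auxiliary integration variable to encode the level parameter.

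First, I would use the level decomposition $\Irr(\mfg^m) = \bigdotcup_{n \in \N_0} \Irr_n(\mfg^m)$. For $n \geq 1$, each $\omega \in \Irr_n(\mfg^m)$ is represented by a coset of $\pi^n\lri^d$ inside $W(\lri) = \lri^d \setminus \mfp\lri^d$ of Haar measure $q^{-dn}$, so
$$
\sum_{\omega \in \Irr_n(\mfg^m)} f(\omega) = q^{dn} \int_{W(\lri)} f(\mathbf{y} \bmod \pi^n)\, d\mu(\mathbf{y})
$$
for any function $f$ depending only on $\mathbf{y} \bmod \pi^n$. Next, Lemma~\ref{central_lem_neu} identifies $\Rad(\omega)$, in the coordinates afforded by $\pi^m\mathbf{b}$, with the left kernel of $\mcR(\underline{w})$ modulo $\pi^{n-m}$. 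Since $\mcR(\underline{w})$ is anti-symmetric, its Smith normal form over the DVR $\lri$ has invariant factors pairing as $\pi^{b_1(\underline{w})}, \pi^{b_1(\underline{w})}, \ldots, \pi^{b_\rho(\underline{w})}, \pi^{b_\rho(\underline{w})}$ (with $b_j(\underline{w}) = \infty$ beyond the rank), and a direct Smith-form calculation yields
$$
\lvert \mfg^m : \Rad(\omega) \rvert = \prod_{j=1}^{\rho} q^{2 \max(0,\, n - m - b_j(\underline{w}))}.
$$
In particular, characters of level $n \leq m$ all satisfy $\Rad(\omega) = \mfg^m$ and together contribute precisely $\sum_{n=0}^{m} \lvert \Irr_n(\mfg^m) \rvert = q^{dm}$ to the zeta function.

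Second, I would convert the exponents $b_j$ into the minor norms $\lVert F_j(\mathbf{y}) \rVert_\mfp$ of the integrand. Over a DVR, the $2j$-th determinantal ideal of $\mcR(\mathbf{y})$ is principal, generated by an element of $\mfp$-adic valuation $2(b_1+\cdots+b_j)$; equivalently, $\lVert F_j(\mathbf{y}) \rVert_\mfp = q^{-2(b_1+\cdots+b_j)}$. A short manipulation with maxima and minima then gives
$$
\frac{\lVert F_j(\mathbf{y}) \cup F_{j-1}(\mathbf{y}) x^2 \rVert_\mfp}{\lVert F_{j-1}(\mathbf{y}) \rVert_\mfp} = q^{-2 \min(b_j(\mathbf{y}),\, \ord_\mfp(x))},
$$
and setting $r = -s/2 - 1$ turns the product over $j$ into $q^{(s+2)\sum_j \min(b_j(\mathbf{y}), \ord_\mfp(x))}$, exactly reproducing the character-index factor once $\ord_\mfp(x) = n - m$.

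Finally, I would couple the two sides through an auxiliary $x \in \mfp$ with $N = n - m = \ord_\mfp(x) \geq 1$: integrating $\lvert x \rvert_\mfp^t$ over each shell $\{\ord_\mfp(x) = N\}$ contributes the weight $(1 - q^{-1}) q^{-(t+1)N}$, and the choice $t = \rho(s+2) - d - 1$ precisely absorbs the residual factor $q^{dn} q^{-(s+2)\rho N} = q^{dm} q^{(d - \rho(s+2))N}$ coming from the representation-side count. Summing over $N \geq 1$ and reattaching the low-level contribution $q^{dm}$ assembles the asserted identity. The main bookkeeping obstacle is checking that $\min(b_j(\mathbf{y}), n - m)$ really depends only on $\mathbf{y} \bmod \pi^n$; this follows from the linearity of $\mcR$ in $\mathbf{y}$ together with the observation that invariant factors of valuation exceeding $n - m$ cannot influence the truncated image, so only the small $b_j$'s, which are stable under $\pi^n$-perturbations, actually enter the integrand.
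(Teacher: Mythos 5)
Your derivation is correct and is essentially the argument that the paper itself outsources to \cite[Section~3.2]{AvKlOnVo10+}: the level decomposition of $\Irr(\mfg^m)$, the translation of $\lvert \mfg^m : \Rad(\omega)\rvert$ into paired elementary divisors of the anti-symmetric matrix $\mathcal{R}(\underline{w})$ via Lemma~\ref{central_lem_neu}, the identity $\lVert F_j \cup F_{j-1}x^2\rVert_\mfp / \lVert F_{j-1}\rVert_\mfp = q^{-2\min(b_j,\ord_\mfp(x))}$, and the substitutions $r=-s/2-1$, $t=\rho(s+2)-d-1$ all match, as does the bookkeeping of the $q^{dm}$ contribution from levels $n\le m$. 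The only point where your justification is more laboured than necessary is the final well-definedness check: rather than arguing about stability of individual invariant factors, note simply that the left kernel of $\mathcal{R}(\underline{w})$ modulo $\mfp^{n-m}$ depends only on $\mathcal{R}(\underline{w})$ modulo $\mfp^{n-m}$, hence by linearity only on $\mathbf{y}$ modulo $\mfp^{n-m}$, and a fortiori only on $\mathbf{y}$ modulo $\mfp^{n}$.
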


In studying the integral \eqref{equ:integral_neu}, it is useful to
distinguish between regular and irregular points of $W(\lri)$.  Let
$\mathcal{U}_1$ denote the subvariety of $\mathbb{A}^d$ defined by the
set of polynomials $F_\rho(\mathbf{Y})$ over $\lri$.  We write
$\mathbb{F}_q$ for the residue class field $\lri/\mfp$.  The reduction
of $\mathcal{U}_1$ modulo $\mfp$ is denoted by
$\overline{\mathcal{U}_1}$.  We call a point $\mathbf{a} \in
(\mathbb{F}_q^d)^*$, and any $\mathbf{y} \in W(\lri)$ mapping onto
$\mathbf{a}$, \emph{regular} if $\mathbf{a}$ is not an
$\mathbb{F}_q$-rational point of $\overline{\mathcal{U}_1}$.  A
functional $w \in \Hom_{\lri}(\mfg,\lri)^*$ and the representations
associated to the Kirillov orbits of the images of $w$ in
$\Hom_\lri(\mfg,\lri/\mfp^n)^*$, $n \in \N$, are said to be
\emph{regular}, if the co-ordinate vector $\mathbf{y} \in W(\lri)$
corresponding to $w$ is regular.  Points, functionals and
representations which are not regular are called \emph{irregular}.
 
%%%

\subsection{Adjoint versus co-adjoint action}\label{sec:adjointversuscoadjoint}
In the special case where $\mfg$ is an $\lri$-Lie lattice such that
$\lfi \otimes_\lri \mfg$ is semisimple, one can use the Killing form,
or a scaled version of it, to translate between co-adjoint orbits and
adjoint orbits.  This has some technical benefits when using the orbit
method, as illustrated in Sections~\ref{sec:skew_fields} and
\ref{sec:sl3_Z3}.

Let $\mfg$ be an $\lri$-lattice such that $\lfi \otimes_\lri \mfg$ is
semisimple, and suppose that $m$ is permissible for $\mfg$ so that the
$m$th principal congruence sublattice $\mfg^m = \mfp^m \mfg$ is
saturable and potent.  At the level of the Lie algebra $\lfi
\otimes_\lri \mfg$, the Killing form $\kappa$ is non-degenerate and
thus provides an isomorphism $\iota$ of $\lfi$-vector spaces between
$\lfi \otimes_\lri \mfg$ and its dual space $\Hom_\lfi(\lfi
\otimes_\lri \mfg,\lfi)$.  Moreover, this isomorphism is
$G$-equivariant for any $G \leq \Aut(\lfi \otimes_\lri \mfg)$.

At the level of the $\lri$-Lie lattices $\mfg$ and $\mfg^m$, the
situation is more intricate, because the restriction of $\kappa$, or a
scaled version $\kappa_0$ of it, may not be non-degenerate over
$\lri$.  Typically, the pre-image of $\Hom_\lri(\mfg,\lri)
\hookrightarrow \Hom_\lfi(\lfi \otimes_\lri \mfg, \lfi)$ under the
$\lfi$-isomorphism $\iota_0: \lfi \otimes_\lri \mfg \rightarrow
\Hom_\lfi(\lfi \otimes_\lri \mfg, \lfi)$ induced by $\kappa_0$ is an
$\lri$-sublattice of $\lfi \otimes_\lri \mfg$ containing $\mfg$ as a
sublattice of finite index.  For instance, if $\mfg$ is a simple Lie
algebra of Chevalley type, then it is natural to work with the
normalised Killing form $\kappa_0$ which is related to the ordinary
Killing form $\kappa$ by the equation $2 h^\vee \kappa_0 = \kappa$.
Here $h^\vee$ denotes the dual Coxeter number; e.g., the dual Coxeter
number for $\spl_n$ is $h^\vee = n$.

Irrespective of the detailed analysis required to translate carefully
between adjoint and co-adjoint orbits, we obtain from the general
discussion in \cite[Section~5]{AvKlOnVo10+} a useful description of
the parameters $\sigma(\mfg)$ and $\rho(\mfg)$, which were introduced
in Section~\ref{subsec:basic_set_up}.  Indeed, they can be computed in
terms of centraliser dimensions as follows:
\begin{equation}\label{equ:sigmarho_centraliser}
  \begin{split}
    \dim_\lfi(\lfi \otimes_\lri \mfg) - 2\sigma(\mfg) & = \max \{
    \dim_\lfi \Cen_{\lfi \otimes_\lri \mfg}(x) \mid x \in (\lfi
    \otimes_\lri \mfg)
    \setminus \{0\} \}, \\
    \dim_\lfi(\lfi \otimes_\lri \mfg) - 2\rho(\mfg) &  = \min \{
    \dim_\lfi \Cen_{\lfi \otimes_\lri \mfg}(x) \mid x \in (\lfi
    \otimes_\lri \mfg) \setminus \{0\} \}.
  \end{split}
\end{equation}

%%%

\subsection{General bounds for the abscissa of convergence}
\label{subsec:abscissa_estimates} 

In this section we derive general bounds for the abscissae of
convergence of zeta functions of compact $p$-adic analytic groups.  We
start by proving Theorem~\ref{thm:abscissa_estimate} which was stated
in the introduction.

% \begin{thm}\label{thm:abscissa_estimate}
%   Let $\lri$ be a compact discrete valuation ring of characteristic
%   $0$ and residue field characteristic $p$, with field of fractions
%   $\lfi$.  Let $\mfg$ be an $\lri$-Lie lattice such that $\lfi
%   \otimes_\lri \mfg$ is a perfect Lie algebra.  Let $d := \dim_\lfi
%   (\lfi \otimes_\lri \mfg)$, $\sigma := \sigma(\mfg)$ and $\rho :=
%   \rho(\mfg)$.  Let $m \in \N$ be permissible for $\mfg$, and let
%   $\mathsf{G}^m := \exp(\mfg^m)$.

%   Then lower and upper bounds for the abscissa of convergence of
%   $\zeta_{\mathsf{G}^m}(s)$ are given by
%   $$
%   (d - 2\rho) \rho^{-1} \leq \alpha(\mathsf{G}^m) \leq (d-2\sigma)
%   \sigma^{-1}.
%   $$
% \end{thm}

\begin{proof}[Proof of Theorem~\ref{thm:abscissa_estimate}] Roughly
  speaking, the idea is that systematically overestimating the size of
  orbits in the co-adjoint action leads to a Dirichlet series
  $\psi_{\textup{low}}(s)$ which converges at least as well as
  $\zeta_{\mathsf{G}^m}(s)$ and hence provides a lower bound for
  $\alpha(\mathsf{G}^m)$.  Similarly, consistently underestimating the
  size of orbits leads to a Dirichlet series which converges no better
  than $\zeta_{\mathsf{G}^m}(s)$ and hence provides an upper bound for
  $\alpha(\mathsf{G}^m)$.  For this we use the description of
  $\zeta_{\mathsf{G}^m}(s)$ in \eqref{Andrei's_formula}.

  First we derive the lower bound for $\alpha(\mathsf{G}^m)$.
  Lemma~\ref{central_lem_neu}, in conjunction with the definition of
  $\rho$, implies that $\lvert \mfg^m:\Rad(\omega) \rvert \leq
  q^{2\rho n}$ for all $n \in \N_0$ and $\omega \in
  \Irr_n(\mfg^m)$.  Clearly, the Dirichlet series
  $$
  \psi_\textup{low}(s) := \sum_{n \in \N_0} \sum_{\omega \in
    \Irr_n(\mfg^m)} q^{-\rho n (s+2)}
  $$
  converges better than $\zeta_{\mathsf{G}^m}(s)$ and it suffices to
  show that the abscissa of convergence of $\psi_\textup{low}(s)$ is
  equal to $(d - 2\rho) \rho^{-1}$.  Indeed, this can easily be read
  off from the precise formula
  \begin{align*}
    \psi_\textup{low}(s) & = 1 + \sum_{n \in \N} (1-q^{-d}) q^{dn}
    q^{-\rho n (s+2)} \\
    & = 1 + (1-q^{-d}) q^{(d-2\rho)-\rho s} (1-q^{(d-2\rho)-\rho
      s})^{-1} \\
    & = (1 - q^{-2\rho - \rho s})(1 - q^{(d-2\rho)-\rho s})^{-1}.
  \end{align*}
  The argument for deriving the upper bound is essentially the same,
  but with a little extra twist.  Recall that $W(\lri) = \left( \lri^d
  \right)^*$.  Similarly as in \cite[Section~3.1]{AvKlOnVo10+} we
  consider a map $\nu : W(\lri) \rightarrow (\N_0 \cup
  \{\infty\})^{\lfloor d/2 \rfloor}$ which maps $\mathbf{y} \in
  W(\lri)$ to the tuple $\mathbf{a} = (a_1,\ldots,a_{\lfloor d/2
    \rfloor})$ such that
  \begin{itemize}
  \item[(i)] $a_1 \leq \ldots \leq a_{\lfloor d/2 \rfloor}$ and
  \item[(ii)] the elementary divisors of the anti-symmetric matrix
    $\mathcal{R}(\mathbf{y})$ are precisely $\mfp^{a_1}$, \ldots,
    $\mfp^{a_{\lfloor d/2 \rfloor}}$, each counted with multiplicity
    $2$, and one further divisor $\mfp^\infty$ if $d$ is odd.
  \end{itemize}
  The definition of $\sigma$ ensures that by forming the composition
  of $\nu$ with the projection $(a_1,\ldots,a_{\lfloor d/2 \rfloor})
  \mapsto (a_1,\ldots,a_\sigma)$ we obtain a map $\nu_{\textup{res}} :
  W(\lri) \rightarrow \N_0^\sigma$.  Clearly, $\nu_{\textup{res}}$ is
  continuous and hence locally constant.  Since $W(\lri)$ is compact,
  this implies that the image of $\nu_{\textup{res}}$ is finite.  From
  Lemma~\ref{central_lem_neu} we deduce that there is a constant $c
  \in \N_0$ such that for all $n \in \N_0$ and $\omega \in
  \Irr_n(\mfg^m)$ we have $\lvert \mfg^m:\Rad(\omega)
  \rvert \geq q^{2\sigma n -c}$.  Now a similar calculation as above
  gives the desired upper bound for $\alpha(\mathsf{G}^m)$.
\end{proof}

\begin{rem*}
  (1) According to \cite[Corollary~4.5]{LaLu08}, the abscissa of
  convergence $\alpha(G)$ is an invariant of the commensurability
  class of $G$.  Thus Theorem~\ref{thm:abscissa_estimate} provides a
  tool for bounding the abscissa of convergence of the zeta function
  of any FAb compact $p$-adic analytic group; see
  Corollary~\ref{cor:abscissa_semisimple}.

  (2) The algebraic argument given in the proof of
  Theorem~\ref{thm:abscissa_estimate} admits a geometric
  interpretation based on the integral formula in
  Proposition~\ref{pro:zeta=poincare}.  To obtain the lower bound one
  assumes that all points are `regular', to obtain the lower bound
  that all points are as `irregular' as possible.
\end{rem*}

For `semisimple' compact $p$-adic analytic groups, the lower bound in
Theorem~\ref{thm:abscissa_estimate} specialises to a result first
proved by Larsen and Lubotzky; see~\cite[Proposition~6.6]{LaLu08}.  We
formulate our more general result in this setting.  Recall that to any
compact $p$-adic analytic group $G$ one associates a $\Q_p$-Lie
algebra, namely $L(G) := \Q_p \otimes_{\Z_p} \mathfrak{h}$ where
$\mathfrak{h}$ is the $\Z_p$-Lie lattice associated to any saturable
open pro-$p$ subgroup $H$ of $G$.  This Lie algebra is an invariant of
the commensurability class of $G$.  Suppose that $L(G)$ is semisimple.
Then it decomposes as a sum $L(G) = S_1 \oplus \ldots \oplus S_r$ of
simple $\Q_p$-Lie algebras $S_i$.  For each $i \in \{1, \ldots, r\}$
the centroid $\lfi_i$ of $S_i$, viz.\ the ring of $S_i$-endomorphisms
of $S_i$ with respect to the adjoint action, is a finite extension
field of $\Q_p$, and $S_i$ is an absolutely simple $\lfi_i$-Lie
algebra.  The fields $\lfi_i$ embed into the completion $\C_p$ of an
algebraic closure of $\Q_p$.  The field $\C_p$ is, algebraically,
isomorphic to the field $\C$ of complex numbers.  Indeed, $\C_p$ and
$\C$ are algebraically closed and have the same uncountable
transcendence degree over~$\Q$.  Choosing an isomorphism between
$\C_p$ and $\C$, we define $\mathfrak{L} := \mathfrak{L}(G) := \C
\otimes_{\Q_p} L(G)$ and $\mathfrak{S}_i := \C \otimes_{\lfi_i} S_i$
for $i \in \{1, \ldots, r\}$.  We define $r_\textup{abs}(G)$ and
$\Phi_\textup{abs}(G)$ to be the absolute rank and the absolute root
system of $L(G)$; they are equal to the rank and the root system of
the semisimple complex Lie algebra~$\mathfrak{L}$.  We denote by
$h^\vee(\mathfrak{S})$ the dual Coxeter number of a simple complex Lie
algebra $\mathfrak{S}$.

\begin{cor} \label{cor:abscissa_semisimple} Let $G$ be a compact
  $p$-adic analytic group such that its associated $\Q_p$-Lie algebra
  $L(G)$ is semisimple and decomposes as described above.  Then the
  abscissa of convergence of $\zeta_G(s)$ satisfies
  $$
  \frac{2r_\textup{abs}(G)}{\lvert \Phi_\textup{abs}(G) \rvert} \leq
  \alpha(G) \leq \min_{i \in \{1,\ldots,r\}}
  \frac{\dim_{\C}(\mathfrak{S}_i)}{h^\vee(\mathfrak{S}_i)-1}-2.
  $$
\end{cor}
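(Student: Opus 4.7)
The plan is to apply Theorem~\ref{thm:abscissa_estimate} to carefully chosen Lie lattices extracted from the decomposition $L(G) = S_1 \oplus \cdots \oplus S_r$, using the commensurability invariance of~$\alpha$ (\cite[Corollary~4.5]{LaLu08}, invoked in the preceding remark) to reduce to a saturable open pro-$p$ subgroup. First, I would replace $G$ by a saturable open pro-$p$ subgroup $H$ with associated $\Z_p$-Lie lattice $\mfh$ satisfying $\Q_p \otimes_{\Z_p} \mfh = L(G)$. Intersecting $\mfh$ with each simple summand yields a finite-index sublattice $\bigoplus_i \mfh_i \subseteq \mfh$, and after further controlled shrinking one can arrange $\mfh_i$ to be a saturable, permissible $\mfo_i$-Lie lattice in $S_i$, with $\mfo_i$ the ring of integers of the centroid $\lfi_i$. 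Accordingly, $H$ is commensurable with $\prod_i H_i$, where $H_i := \exp(\mfh_i)$, and $\zeta_H(s) = \prod_i \zeta_{H_i}(s)$.

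For the lower bound, apply Theorem~\ref{thm:abscissa_estimate} to $\mfh$ over $\Z_p$. By \eqref{equ:sigmarho_centraliser}, $d - 2\rho(\mfh)$ equals $\min\{\dim_{\Q_p} \Cen_{L(G)}(x) : 0 \neq x \in L(G)\}$. A regular semisimple element of $L(G)$, chosen componentwise in each $S_i$, has centraliser a Cartan subalgebra of $\Q_p$-dimension $r_\textup{abs}(G)$; such $\Q_p$-rational points exist by Zariski density of the regular semisimple locus in the affine space underlying $L(G)$, and the dimension formula uses invariance of centraliser dimension under base change to~$\C$. Since $d = r_\textup{abs}(G) + |\Phi_\textup{abs}(G)|$, one finds $\rho(\mfh) = |\Phi_\textup{abs}(G)|/2$, yielding the lower bound $2 r_\textup{abs}(G)/|\Phi_\textup{abs}(G)|$.

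For the upper bound, apply Theorem~\ref{thm:abscissa_estimate} separately to each $\mfh_i$ viewed as an $\mfo_i$-Lie lattice. Here $\dim_{\lfi_i}(\lfi_i \otimes_{\mfo_i} \mfh_i) = \dim_\C \mathfrak{S}_i$, and by \eqref{equ:sigmarho_centraliser}, $2\sigma(\mfh_i)$ is the minimum $\lfi_i$-dimension of a non-zero adjoint orbit in the simple Lie algebra $S_i$. Over~$\C$ this minimum is realised on the minimal nilpotent orbit of $\mathfrak{S}_i$, of dimension $2(h^\vee(\mathfrak{S}_i) - 1)$, and a highest-root vector in a Chevalley model provides an $\lfi_i$-rational point of that orbit. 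Consequently $\sigma(\mfh_i) = h^\vee(\mathfrak{S}_i) - 1$, and Theorem~\ref{thm:abscissa_estimate} delivers $\alpha(H_i) \leq \dim_\C \mathfrak{S}_i / (h^\vee(\mathfrak{S}_i) - 1) - 2$ for every~$i$. Combining these individual bounds via $\zeta_H = \prod_i \zeta_{H_i}$ and the commensurability $\alpha(G) = \alpha(H)$ delivers the claimed upper bound.

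The main technical obstacles I anticipate are twofold. First, securing the required rational points — a regular semisimple $\Q_p$-point of $L(G)$ for the lower bound and, for each~$i$, a minimal nilpotent $\lfi_i$-point of $S_i$ for the upper bound — combines standard Zariski-density arguments with an explicit Chevalley construction to descend highest-root vectors to the (a priori non-split) field of definition of~$S_i$. Second, the successive passages to finite-index sublattices must be carried out while retaining permissibility and saturability of the resulting $\mfo_i$-Lie lattices, which is handled by repeated appeal to \cite[Proposition~2.3]{AvKlOnVo10+}.
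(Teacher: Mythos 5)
Your proposal follows essentially the same route as the paper: reduce by commensurability to a saturable lattice, get the lower bound from $\rho$ via \eqref{equ:sigmarho_centraliser} and regular elements, split into simple factors over the centroids, and get the upper bound from $\sigma$ via the minimal nilpotent orbit dimension $2h^\vee-2$. One step as written is false, although the corollary survives it: you claim that $\sigma(\mfh_i) = h^\vee(\mathfrak{S}_i)-1$ \emph{exactly}, on the grounds that ``a highest-root vector in a Chevalley model provides an $\lfi_i$-rational point'' of the minimal nilpotent orbit. A Chevalley model over $\lfi_i$ exists only when $S_i$ is split; for anisotropic forms such as $\spl_1(\Ldr)$ with $\Ldr$ a division algebra, $S_i$ contains no non-zero nilpotent elements at all, and indeed the paper's own Corollary~\ref{cor:abscissa_skew} gives $\sigma(\spl_1(\ldr)) = \ell(\ell-1)/2$, which strictly exceeds $h^\vee-1 = \ell-1$ as soon as $\ell \geq 3$. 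What the upper bound actually requires is only the inequality $\sigma(\mfh_i) \geq h^\vee(\mathfrak{S}_i)-1$, which follows because centraliser dimensions are unchanged under base change to $\C$, so the minimum orbit dimension over $\lfi_i$-rational non-zero elements is at least the minimum over all of $\mathfrak{S}_i\setminus\{0\}$; the latter equals $2h^\vee(\mathfrak{S}_i)-2$ because every sheet contains a nilpotent orbit (Borho--Kraft) and the minimal nilpotent orbit has dimension $2h^\vee-2$ (Wang). Since $(d-2\sigma)/\sigma$ is decreasing in $\sigma$, this one-sided bound suffices, and this is exactly how the paper argues. So you should delete the rational-point claim and state only the inequality; with that correction your proof coincides with the paper's.
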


\begin{proof} By our remark, we may assume without loss of generality
  that $G = \exp(\mfg)$ is associated to a potent and saturable
  $\Z_p$-Lie lattice $\mfg$.  The lower bound for $\alpha(G)$ follows
  immediately from Theorem~\ref{thm:abscissa_estimate} and the equations
  \eqref{equ:sigmarho_centraliser} on noting that $r_\textup{abs}(G) =
  \dim(G) - 2\rho(\mfg)$ and $\lvert \Phi_\textup{abs}(G) \rvert =
  2\rho(\mfg)$.

  It remains to establish the upper bound.  Replacing $G$ by an open
  subgroup, if necessary, we may assume that $\mfg = \mathfrak{s}_1
  \oplus \ldots \oplus \mathfrak{s}_r$ and $G = G_1 \times \ldots
  \times G_r$, where for each $i \in \{1, \ldots, r\}$ the summand
  $\mathfrak{s}_i$ is a potent and saturable $\Z_p$-Lie lattice such
  that $\Q_p \otimes_{\Z_p} \mathfrak{s}_i$ is simple and $G_i =
  \exp(\mathfrak{s}_i)$.  As $\zeta_G(s) = \prod_{i=1}^r
  \zeta_{G_i}(s)$, we have $\alpha(G) = \min_{i \in \{1,\ldots,r\}}
  \alpha(G_i)$ and it is enough to bound $\alpha(G_i)$ for each $i \in
  \{1,\ldots,r\}$.

  Fix $i \in \{1,\ldots,r\}$ and write $\mathfrak{s} :=
  \mathfrak{s}_i$.  As before, the centroid $\lfi$ of $\Q_p
  \otimes_{\Z_p} \mathfrak{s}$ is a finite extension of $\Q_p$, and
  $\Q_p \otimes_{\Z_p} \mathfrak{s}$ is an absolutely simple
  $\lfi$-Lie algebra.  Without loss of generality we may regard
  $\mathfrak{s}$ as an $\lri$-Lie lattice, where $\lri$ is the ring of
  integers of $\lfi$.  Writing $\mathfrak{S} = \C \otimes_\lri
  \mathfrak{s}$, we deduce from Theorem~\ref{thm:abscissa_estimate}
  that it suffices to show: $\sigma(\mathfrak{s}) \geq
  h^\vee(\mathfrak{S})-1$.  It is clear that $2\sigma(\mathfrak{s})$
  is greater or equal to the dimension of a non-zero co-adjoint orbit
  of $\mathfrak{S}$ of minimal dimension.  According to \cite[Section
  5.8]{BoKr79}, every sheet of $\mathfrak{S}$ contains a unique
  nilpotent orbit, and the dimension of a minimal nilpotent orbit in
  $\mathfrak{S}$ is equal to $2 h^\vee(\mathfrak{S})-2$;
  see~\cite[Theorem 1]{Wa99}. It follows that $\sigma(\mathfrak{s})
  \geq h^\vee(\mathfrak{S})-1$.
\end{proof}

It is worth pointing out that the absolute rank and the size of the
absolute root system of a semisimple Lie algebra grow proportionally
at the same rate under restriction of scalars; hence, if $G$ is
defined over an extension $\lri$ of $\Z_p$, then it is natural to work
directly with the invariants of the Lie algebra over $\lfi$, without
descending to~$\Q_p$.  A similar remark applies to the upper bound in
Corollary~\ref{cor:abscissa_semisimple}.  For instance, for the family
of special linear groups $\SL_n(\lri)$, $n \in \N$, we obtain the
estimates 
$$
2/n \leq \alpha(\SL_n(\lri)) \leq n-1,
$$
reflecting the fact that $\spl_n(\C)$ has rank $n-1$, a root system of
size $n^2-n$, dimension $n^2-1$ and dual Coxeter number
$h^\vee(\spl_n(\C)) = n$.  More generally, we note that
Corollary~\ref{cor:abscissa_semisimple} provides upper bounds for the
abscissae of convergence of zeta functions of groups corresponding to
classical Lie algebras which are linear in the rank.  We further
remark that for `isotropic simple' compact $p$-adic analytic groups
the abscissa of convergence is actually bounded from below by $1/15$;
see~\cite[Theorem~8.1]{LaLu08}.

Another consequence of Theorem~\ref{thm:abscissa_estimate} is
recorded as Corollary~\ref{cor:abscissa_skew} in
Section~\ref{sec:skew_sub_1}.

%%%%%%

\section{Explicit formulae for $\SL_2(\lri)$ and its principal
  congruence subgroups}\label{sec:sl2}

In this section we use the setup from
Sections~\ref{subsec:basic_set_up}, \ref{sec:adjointversuscoadjoint}
and \cite[Section~5]{AvKlOnVo10+} to compute explicitly the zeta
functions of `permissible' principal congruence subgroups of the
compact $p$-adic analytic group $\SL_2(\lri)$, where $\lri$ denotes a
compact discrete valuation ring of characteristic $0$.  As before, we
write $\mfp$ for the maximal ideal of $\lri$; the characteristic and
cardinality of the residue field $\lri/\mfp$ are denoted by $p$ and
$q$.  We write $e(\lri,\Z_p)$ for the absolute ramification index of
$\lri$.

\subsection{Principal congruence subgroups of
  $\SL_2(\lri)$} \label{sec:expl_form_sl2}

Our aim in this section is to prove Theorem~\ref{thm:SL2} which was
stated in the introduction.  It provides explicit formulae for the
zeta functions of principal congruence subgroups $\SL_2^m(\lri)$ for
permissible $m$, with no restrictions if $p > 2$ and for unramified
$\lri$ if $p=2$.

% \begin{thm}\label{thm:SL2}
%   For all $m \in \N$ which are permissible for $\spl_2(\lri)$ we have
%   $$
%   \zeta_{\SL_2^m(\lri)}(s) =
%   \begin{cases}
%     q^{3m} \frac{1 - q^{-2-s}}{1 - q^{1-s}} & \text{if $p>2$,} \\
%     q^{3m} \frac{q^2 - q^{-s}}{1 - q^{1-s}} & \text{if $p=2$ and
%       $e(\lri,\Z_2) = 1$}.
%   \end{cases}
%   $$
% \end{thm}

\begin{rem*}
  In fact, our proof also supplies an explicit formula, if $p=2$ and
  $e(\lri,\Z_2) >1$, but this formula is not as concise as the ones
  stated in Theorem~\ref{thm:SL2}.  It is noteworthy that in this
  special case the formula only depends on the ramification index
  $e(\lri,\Z_2)$, but not on the more specific isomorphism type of the
  ring $\lri$.  It would be interesting to investigate what happens
  for `semisimple' groups of higher dimensions; already for
  $\SL_3(\lri)$ the matter remains to be resolved; cf.\
  Section~\ref{sec:sl3_Z3}.
\end{rem*}

\begin{proof}[Proof of Theorem~\ref{thm:SL2}]
  Let $m \in \N$ be permissible for $\spl_2(\lri)$.  We need to
  compute the integral~\eqref{equ:integral_neu} over $\mfp \times
  W(\lri)$, where $W(\lri) = \big( \lri^3 \big)^*$.  It is easy to
  write down the commutator matrix $\mathcal{R}(\mathbf{Y})$ for the
  $\lri$-Lie lattice $\spl_2(\lri)$, and one verifies immediately that
  $\rho = 1$.  Indeed, working with the standard $\lri$-basis
  $\mathbf{e} = \bigl(
  \begin{smallmatrix} 0 & 1 \\ 0 & 0 \end{smallmatrix} \bigr)$,
  $\mathbf{f} = \bigl(
  \begin{smallmatrix} 0 & 0 \\ 1 & 0 \end{smallmatrix} \bigr)$,
  $\mathbf{h} = \bigl(
  \begin{smallmatrix} 1 & 0 \\ 0 & -1 \end{smallmatrix} \bigr)$ of
  $\spl_2(\lri)$ one obtains
  \begin{equation}\label{comm_matr_sl2}
    \mathcal{R}(\mathbf{Y}) =
    \begin{pmatrix}
      0    & Y_3   & -2Y_1 \\
      -Y_3 & 0     & 2Y_2  \\
      2Y_1 & -2Y_2 & 0
    \end{pmatrix}.
  \end{equation}
  In view of \eqref{comm_matr_sl2} we distinguish two cases.

  First suppose that $p > 2$.  In this case it is easily seen that
  $$
  \max \{ \lvert f(\mathbf{y}) \rvert_\mfp \mid f \in F_1(\mathbf{Y})
  \} \cup \{ \lvert x^2 \rvert_\mfp \} = 1 \quad \text{for all $x \in
    \mfp$ and $\mathbf{y} \in W(\lri)$.}
  $$
  Thus the integral~\eqref{equ:integral_neu} takes the form
  $$
  \mathcal{Z}_\lri(r,t) = \int_{(x,\mathbf{y}) \in \mfp \times W(\lri)}
  \lvert x \rvert_\mfp^t \, d\mu(x,\mathbf{y}).
  $$
  As
  $$
  \int_{x \in \mfp} \lvert x \rvert_\mfp^s \, d\mu(x) = \frac{(1 -
    q^{-1}) q^{-1-s}}{1 - q^{-1-s}}
  $$
  and $\mu(W(\lri)) = 1 - q^{-3}$ we obtain
  $$
  \mathcal{Z}_\lri(r,t) = \frac{(1 - q^{-1}) q^{-1-t} (1 - q^{-3})}{1
    - q^{-1-t}}
  $$ so that, by Proposition~\ref{pro:zeta=poincare}, we have
  \begin{equation*}
    \zeta_{\SL_2^m(\lri)}(s) = q^{3m} \left( 1 + (1-q^{-1})^{-1}
      \mathcal{Z}_\lri (-s/2-1,s-2) \right) = \frac{q^{3m}(1-q^{-2-s})}{1-q^{1-s}}.
  \end{equation*}

  Now consider the exceptional case $p=2$.  Put $e := e(\lri,\Z_2)$.  Defining  
  $$
  W(\lri)^{[j]} :=
  \begin{cases}
    \{ \mathbf{y} \in W(\lri) \mid y_3 \in \mfp^j \setminus \mfp^{j+1}
    \}
    & \text{if $0 \leq j \leq e-1$,} \\
    \{ \mathbf{y} \in W(\lri) \mid y_3 \in 2 \lri \} & \text{if
      $j=e$,}
  \end{cases}
  $$
  we write $W(\lri)$ as a disjoint union $W(\lri) = W(\lri)^{[0]} \,
  \dotcup \, W(\lri)^{[1]} \, \dotcup \, \ldots \, \dotcup \,
  W(\lri)^{[e]}$.  From \eqref{comm_matr_sl2} we see that, for all $x
  \in \mfp^i \setminus \mfp^{i+1}$ and $\mathbf{y} \in W(\lri)^{[j]}$,
  where $i \in \N$ and $1 \leq j \leq e$,
  \begin{equation*}
    \max \{ \lvert f(\mathbf{y}) \rvert_\mfp \mid f \in F_1(\mathbf{Y})
    \} \cup \{ \lvert x^2 \rvert_\mfp \} = q^{- 2 \min \{i,j,e\}}.
  \end{equation*}
  Furthermore, we note that $\mu(W(\lri)^{[0]}) = (1-q^{-1})$,
  $\mu(W(\lri)^{[j]}) = (1-q^{-1})(1-q^{-2})q^{-j}$ for $1 \leq j \leq
  e-1$, and $\mu(W(\lri)^{[e]}) = (1-q^{-2})q^{-e}$.  Thus the
  integral~\eqref{equ:integral_neu} takes the form
  \begin{align*}
    \mathcal{Z}_\lri(r,t) & = \mu(W(\lri)^{[0]}) \int_{x \in \mfp}
    \lvert
    x \rvert_\mfp^t \, d\mu(x) \\
    & \quad + \sum_{j=1}^e \mu(W(\lri)^{[j]}) \left( \sum_{i=1}^{j-1}
      q^{-2ir} \int_{x \in \mfp^i \setminus \mfp^{i+1}} \lvert x
      \rvert_\mfp^t \, d\mu(x) + q^{-2jr} \int_{x \in \mfp^j} \lvert x
      \rvert_\mfp^t \, d\mu(x) \right)
  \end{align*}
  which in turn yields an explicit formula for
  $\zeta_{\SL_2^m(\lri)}(s)$, again by
  Proposition~\ref{pro:zeta=poincare}.

  In the special case $e = e(\lri,\Z_2) = 1$ the resulting formula is
  as concise as for $p>2$: indeed, we have
  $$
  \mathcal{Z}_\lri(r,t) = \left( (1-q^{-1}) + (1-q^{-2}) q^{-1-2r}
  \right) \frac{(1-q^{-1}) q^{-1-t}}{1-q^{-1-t}}
  $$
  and consequently
  $$
  \zeta_{\SL_2^m(\lri)}(s) = q^{3m} \left( 1 + (1-q^{-1})^{-1}
    \mathcal{Z}_\lri(-s/2-1,s-2) \right) = \frac{q^{3m}(q^2 -
    q^{-s})}{1 - q^{1-s}}.
  $$
\end{proof}

Computer aided calculations suggest the following
conjecture.
\begin{con}\label{con:SL_2^1}
  The  zeta function of $\SL_2^1(\Z_2)$ is given by
  $$
  \zeta_{\SL_2^1(\Z_2)}(s) = \frac{2^{5}(2^2 - 2^{-s})}{1 -
    2^{1-s}}.
  $$
\end{con}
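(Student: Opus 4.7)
The conjecture cannot be obtained directly from Theorem~\ref{thm:SL2}, because $m=1$ is not permissible for $\spl_2(\Z_2)$: saturability of a $\Z_2$-Lie lattice $\mfh$ at $p=2$ requires $[\mfh,\mfh]\subseteq 4\mfh$, which fails for $\mfh = 2\,\spl_2(\Z_2)$. However, $m=2$ is permissible, and Theorem~\ref{thm:SL2} gives
$$
\zeta_{\SL_2^2(\Z_2)}(s) \;=\; 2^{6}\,\frac{2^{2}-2^{-s}}{1-2^{1-s}}.
$$
The plan is to deduce $\zeta_{\SL_2^1(\Z_2)}(s)$ from this formula by Clifford theory applied to the normal extension
$$
1 \longrightarrow N \longrightarrow G \longrightarrow V \longrightarrow 1,
$$
with $N := \SL_2^2(\Z_2)$, $G := \SL_2^1(\Z_2)$, and $V := G/N$ canonically isomorphic, via reduction mod~$4$, to the elementary abelian $2$-group $\spl_2(\F_2)$ of order~$8$. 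The conjectured identity rewrites as $\zeta_G(s) = \tfrac12 \zeta_N(s)$, so it has to be recovered from a delicate orbit analysis rather than a single global manipulation.

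The concrete steps are as follows. First, I would describe $\Irr(N)$ via the Kirillov orbit method, recovering the formula for $\zeta_N$ and organising irreducibles by level $n\in\N_0$ of their Kirillov data as in Section~\ref{subsec:basic_set_up}. Next, since the $G$-conjugation action on $\Irr(N)$ factors through $V$, I would work out this $V$-action explicitly: a short Hausdorff expansion shows that conjugation by $g = I + 2B \in G$ on $h = \exp(4C) \in N$ takes $C$ to $C + 2[B,C]$ modulo higher order terms, so $V \cong \spl_2(\F_2)$ acts on the level-$n$ functionals of $\mfg^2 = 4\,\spl_2(\Z_2)$ by a shifted adjoint action, most transparently reformulated, via the trace form, as an adjoint action on a suitable lattice in $\spl_2(\Q_2)$. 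Stratifying functionals by level and by $V$-stabiliser type, I would then compute, for each stratum, the $2$-cocycle $\beta_\psi \in H^2(V_\psi,\C^*)$ describing the obstruction to extending $\psi \in \Irr(N)$ to its inertia subgroup in $G$; Clifford theory then yields the count and the common degree of the irreducibles of $G$ lying above each $V$-orbit. Summing all such contributions and identifying the result with the closed form in Conjecture~\ref{con:SL_2^1} would complete the proof.

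The principal obstacle is the bookkeeping of the cocycles $\beta_\psi$: because the residue characteristic $2$ coincides with the exponent of $V$, these cocycles are typically non-trivial, and a miscount of projective versus genuine extensions would change the leading coefficient $2^5$ in the conjectured formula. A secondary challenge is to verify uniformly in the level $n$ that the stratification of $V$-orbits on $\Irr_n(\mfg^2)$ is coarse enough to yield a closed Poincar\'e series paralleling the geometric series in Theorem~\ref{thm:SL2}. If this bottom-up approach proves unwieldy, an alternative is to apply Clifford theory in the opposite direction, starting from the formula for $\zeta_{\SL_2(\Z_2)}$ available via \cite{NoWo76} and descending to the normal subgroup $\SL_2^1(\Z_2)$ of index $6$; this exchanges one case analysis for another but concentrates the non-abelian part of the bookkeeping on the finite group $\SL_2(\F_2) \cong S_3$.
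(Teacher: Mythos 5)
You should first note that the paper itself does not prove this statement: it is presented explicitly as a conjecture, supported only by ``computer aided calculations'', and the remark at the end of Section~\ref{sec:full_sl_2} merely \emph{suggests} the very strategy you describe (Clifford theory over the extension $\SL_2^2(\Z_2) \trianglelefteq \SL_2^1(\Z_2)$, starting from the known formula for $\zeta_{\SL_2^2(\Z_2)}(s)$ in Theorem~\ref{thm:SL2}). Your setup is correct and consistent with the paper: $m=1$ is indeed not permissible for $\spl_2(\Z_2)$, $m=2$ is, $\zeta_{\SL_2^2(\Z_2)}(s)=2^6(2^2-2^{-s})/(1-2^{1-s})$, and the conjecture amounts to $\zeta_G(s)=\tfrac12\zeta_N(s)$ with $V=G/N\cong\spl_2(\F_2)$ of order $8$.

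However, what you have written is a research plan, not a proof. Every step that carries actual content is deferred: you do not determine the $V$-orbits and inertia subgroups on $\Irr_n(\mfg^2)$ for any level $n$, you do not compute a single obstruction cocycle $\beta_\psi$, and you do not produce the degree and multiplicity data needed to assemble the Dirichlet series. The difficulty you flag is exactly the one that blocks the argument: since $\lvert V\rvert$ is a power of $p=2$, none of the standard sufficient criteria for extendability used elsewhere in the paper (coprime index, cyclic quotient, the algebra-group argument of \cite{AnNi08} combined with the $q$-power degree property, which itself relies on $2e<p-1$) applies, so one cannot conclude $\zeta_{G,\theta}(s)=\zeta_{I_G(\theta)/N}(s)$ and the formula \eqref{equ:Clifford} cannot be invoked in the simplified form used for $p>2$. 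Until the stabilisers and cocycles are computed uniformly in the level --- or the descent from $\zeta_{\SL_2(\Z_2)}(s)$ via \cite{NoWo76} is actually carried out --- the factor $\tfrac12$, i.e.\ the coefficient $2^5$, is not established, and the statement remains, as in the paper, a conjecture.
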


%%%

\subsection{Clifford theory}\label{sec:Clifford_basics} We briefly
recall some applications of basic Clifford theory to representation
zeta functions.  For more details we refer to
\cite[Section~7]{AvKlOnVo10+}.  Let $G$ be a group, and $N
\trianglelefteq G$ with $\lvert G : N \rvert < \infty$.  For $\theta
\in \Irr(N)$, let $I_G(\theta)$ denote the inertia group of $\theta$
in $G$, and $\Irr(G , \theta)$ the set of all irreducible characters
$\rho$ of $G$ such that $\theta$ occurs as an irreducible constituent
of the restricted character $\res_N^G(\rho)$.  One shows that, if $N$
admits only finitely many irreducible characters of any given degree,
then so does $G$ and
\begin{equation}\label{equ:hochheben}
  \zeta_G(s) = \sum_{\theta
    \in \Irr(N)} \theta(1)^{-s} \cdot \lvert G : I_G(\theta) \rvert^{-1-s}
  \zeta_{G,  \theta}(s),
\end{equation}
where
$$
\zeta_{G, \theta}(s) := \theta(1)^s \lvert G : I_G(\theta)
\rvert^s \sum_{\rho \in \Irr(G , \theta)} \rho(1)^{-s}.
$$
In the special case where $\theta$ extends to an irreducible character
$\hat \theta$ of $I_G(\theta)$, there is an effective description of
the elements $\rho \in \Irr(G, \theta)$, and one has $\zeta_{G,
  \theta}(s) = \zeta_{I_G(\theta)/N}(s)$.  There are several basic
sufficient criteria for the extendability of $\theta$; cf.\
\cite[Chapter~19]{Hu98}.

%%%

\subsection{The group $\SL_2(\lri)$} \label{sec:full_sl_2} In this
section we combine the Kirillov orbit method and basic Clifford theory
to compute explicitly the zeta function of the compact $p$-adic
analytic group $\SL_2(\lri)$.  The zeta function of the group
$\SL_2(R)$, where $R$ is an arbitrary compact discrete valuation ring
of odd residue characteristic, was first computed by Jaikin-Zapirain
by means of a different approach; see
\cite[Theorem~7.5]{Ja06}. 

For our approach we assume that $p-2 \geq e$ where $e :=
e(\lri,\Z_p)$; in particular, this implies $p>2$.  Then the $\lri$-Lie
lattice $\spl_2^1(\lri)$ and the corresponding pro-$p$ group
$\SL_2^1(\lri)$ are potent and saturable; see
\cite[Proposition~2.3]{AvKlOnVo10+}.  This means that the orbit method
can be applied to describe the irreducible characters of
$\SL_2^1(\lri)$.

Write $G := \SL_2(\lri)$ and $N := \SL_2^1(\lri)$.  Clifford theory,
as indicated in Section~\ref{sec:Clifford_basics}, provides a
framework to link $\Irr(G)$ and $\Irr(N)$.  Put $\mfg := \spl_2(\lri)$
and $\mfn := \spl_2^1(\lri) = \mfp \mfg$.  The Kirillov orbit method
links characters $\theta \in \Irr(N)$ to co-adjoint orbits of $N$ on
$\Hom_\lri(\mfn,\lri)$.  Choose a uniformiser $\pi$ of $\lri$.  Via
the $G$-equivariant isomorphism of $\lri$-modules $\mfg \rightarrow
\mfn$, $x \mapsto \pi x$, we can link co-adjoint orbits on
$\Hom_\lri(\mfn,\lri)$ to co-adjoint orbits on $\Hom_\lri(\mfg,\lri)$.

We follow closely the approach outlined in
\cite[Section~5]{AvKlOnVo10+}, which uses the normalised Killing form
to translate between the adjoint action of $G$ on $\mfg$ and the
co-adjoint action of $G$ on $\Hom_\lri(\mfg,\lri)$.  The dual Coxeter
number of $\spl_2$ is $h^{\vee} = 2$ so that the normalised Killing
form
$$
\kappa_0: \mfg \times \mfg \rightarrow \lri, \quad \kappa_0(x,y) = (2
h^{\vee})^{-1} \Tr(\ad(x) \ad(y))
$$
has the structure matrix
$$
[ \kappa_0( \cdot, \cdot)]_{(\mathbf{h},\mathbf{e},\mathbf{f})} = 
\left(
\begin{smallmatrix}
  2 & 0 & 0 \\
  0 & 0 & 1 \\
  0 & 1 & 0
\end{smallmatrix}
\right)
$$
with respect to the basis
\begin{equation*}
  \mathbf{h} = \left(
  \begin{smallmatrix}
    1 & 0 \\
    0 & -1
  \end{smallmatrix} \right), \quad \mathbf{e} = \left(
  \begin{smallmatrix}
    0 & 1 \\
    0 & 0
  \end{smallmatrix} \right), \quad \mathbf{f} = \left(
  \begin{smallmatrix}
    0 & 0 \\
    1 & 0
  \end{smallmatrix} \right).
\end{equation*}
As $p > 2$, the form $\kappa_0$ is `non-degenerate' over $\lri$ and
induces a $G$-equivariant isomorphism of $\lri$-modules $\mfg
\rightarrow \Hom_\lri(\mfg,\lri)$, $x \mapsto \kappa_0(x,\cdot)$.  We
obtain a $G$-equivariant commutative diagram
\begin{equation}\label{equ:diagram}
\begin{CD}
  \mfg^* @>{\cong}>>  \Hom_{\lri}(\mfg,\lri)^* \\
% (\mfg_\lri^\vee)^*
  @VVV @VVV \\
  (\mfg / \mfp^n \mfg)^* @>{\cong}>> \Hom_\lri(\mfg/\mfp^n\mfg,\lri /
  \mfp^n)^* @>{\cong}>> \Irr_n(\mfn) \\
  @VVV @VVV \\
  \spl_2(\F_q)^* @>{\cong}>> \Hom_{\F_q}(\spl_2(\F_q), \F_q)^*
\end{CD}
\end{equation}
where the last row is obtained by reduction modulo $\mfp$ and we have
used the isomorphism $\lri/\mfp \cong \F_q$.  Following the approach
taken in \cite[Section~7]{AvKlOnVo10+}, we are interested in the
orbits and centralisers of elements $x \in \mfg$ and their reductions
$\overline{x}$ modulo $\mfp$ under the adjoint action of $G$.

In order to apply Clifford theory, we require an overview of the
elements in $\spl_2(\mathbb{F}_q)$ up to conjugacy under the group
$\GL_2(\F_q)$.  We distinguish four different types, labelled $0$,
$1$, $2$a,~$2$b.  The total number of elements of each type and the
isomorphism types of their centralisers in $\SL_2(\F_q)$ are
summarised in Tables~\ref{table_sl2_1} and \ref{table_sl2_2}; see
Appendix~\ref{sec:aux_sl2} for a short discussion.  We remark that in
this particular case all elements are regular.

\begin{table}
  \centering
  \begin{tabular}{|c|l|l|l|l|}
    \hline
    type &  & no.\ of orbits & size of each orbit & total
    number \\
    \hline\hline
    0 & -- & $1$ & $1$ & $1$ \\
    1 & regular & $1$ & $q^2-1$ & $q^2 -1$ \\
    2a & regular & $(q-1)/2$ & $q^2 + q$ & $(q^2-1) q/2$ \\
    2b & regular & $(q-1)/2$ & $q^2 - q$ & $(q-1)^2 q/2$ \\
    \hline
  \end{tabular}
  \smallskip
  \caption{Orbits in $\spl_2(\mathbb{F}_q)$ under
    conjugacy by $\GL_2(\F_q)$ where $q=p^r$}
  \label{table_sl2_1}
\end{table}
\begin{table}
  \centering
  \begin{tabular}{|c|l|l|}
    \hline
    type & & centraliser in $\SL_2(\F_q)$ \\
    \hline\hline
    0 & -- & $\SL_2(\F_q)$ \\
    1 & regular\ & $\mu_2(\F_q) \times \F_q^+ \cong C_2 \times C_p^r$ \\
    2a & regular\ & $\F_q^* \cong C_{q-1}$ \\
    2b & regular\ & $\ker(N_{\F_{q^2} \vert \F_q}) \cong C_{q+1}$ \\
    \hline
  \end{tabular}
  \smallskip
  \caption{Centralisers in $\SL_2(\F_q)$ of elements of
    $\spl_2(\mathbb{F}_q)$ where $q = p^r$}
  \label{table_sl2_2}
\end{table}

Corollary~7.6 in \cite{AvKlOnVo10+} provides 

\begin{lem} \label{lem:lift} Let $x \in \spl_2(\lri)^*$, and
  let $\overline{x} \in \spl_2(\F_q)^*$ denote the reduction
  of $x$ modulo $\mfp$.  Then
  $$
  \Cen_{\SL_2(\lri)}(\overline{x}) =
  \Cen_{\SL_2(\lri)}(x) \SL_2^1(\lri).
  $$
\end{lem}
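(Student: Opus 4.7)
The identification $\SL_2^1(\lri) = \ker\bigl(\pi\colon \SL_2(\lri) \twoheadrightarrow \SL_2(\F_q)\bigr)$ shows that $\Cen_{\SL_2(\lri)}(\overline{x}) = \pi^{-1}\bigl(\Cen_{\SL_2(\F_q)}(\overline{x})\bigr)$. Since the product $\Cen_{\SL_2(\lri)}(x)\,\SL_2^1(\lri)$ is obviously contained in this preimage, the asserted equality reduces to the surjectivity of the reduction map
\[
\Cen_{\SL_2(\lri)}(x) \longrightarrow \Cen_{\SL_2(\F_q)}(\overline{x}),
\]
and this is what I aim to prove.

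My preferred route is scheme-theoretic. Consider the centraliser subscheme $Z := Z_{\SL_2/\lri}(x) = \{g \in \SL_2 \mid \mathrm{Ad}(g)(x) = x\}$; its tangent space at the identity is $\ker(\ad(x))$. From Tables~\ref{table_sl2_1} and~\ref{table_sl2_2} one reads off that every element of $\spl_2(\F_q)^*$ is regular, that is, $\ker(\ad(\overline{x}))$ has dimension $1$. Nakayama's lemma then forces $\ker(\ad(x))$ to be a free $\lri$-module of rank $1$, so both fibres of $Z$ over $\lri$ are smooth of dimension $1$ and coincide in dimension. Consequently $Z$ is smooth of relative dimension $1$ over the henselian local ring $\lri$, and Hensel's lemma lifts every $\F_q$-point of $Z$ to an $\lri$-point, which is precisely the required surjectivity.

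A more elementary alternative proceeds by case analysis through the four conjugacy types in Tables~\ref{table_sl2_1} and~\ref{table_sl2_2}. For the nilpotent type~1, conjugate $x$ by an element of $\SL_2(\lri)$ into $\lri^*\mathbf{e} + \mfp\,\spl_2(\lri)$ and compute its centraliser directly. For the split semisimple type~2a, Hensel's lemma diagonalises $x$ over $\lri$ because its characteristic polynomial has distinct residual roots, and the centraliser is the split torus $\{\mathrm{diag}(t,t^{-1}) \mid t \in \lri^*\}$, which plainly surjects onto $\F_q^*$. The main obstacle is the non-split type~2b, where one must diagonalise $x$ over the unramified quadratic extension $\lri'$ of $\lri$ and then establish surjectivity of the reduction $\ker(\mathrm{N}_{\lri'|\lri}) \to \ker(\mathrm{N}_{\F_{q^2}|\F_q})$; this follows from Hilbert~90 combined with Hensel's lemma applied to the norm torus, or, more uniformly, from the smoothness argument above.
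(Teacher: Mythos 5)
Your reduction of the lemma to the surjectivity of the reduction map $\Cen_{\SL_2(\lri)}(x) \to \Cen_{\SL_2(\F_q)}(\overline{x})$ is correct, and your ``elementary alternative'' is essentially the argument the paper itself gives in the remark following the lemma (its official proof being a citation of Corollary~7.6 of the companion paper): one passes to $\GL_2(\lri)$-normal forms for the three residual types and lifts centralising elements by Hensel's lemma applied to the explicit equations $a^2 - \pi^n \nu b^2 = 1$, $ab = 1$ and $a^2 - \mu b^2 = 1$. That route, suitably fleshed out, is fine.

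Your preferred scheme-theoretic route, however, has a genuine gap at the step ``both fibres of $Z$ are smooth of dimension $1$ \dots\ consequently $Z$ is smooth of relative dimension $1$''. Fibrewise smoothness together with equality of the fibre dimensions does \emph{not} imply smoothness over $\lri$: one also needs flatness, and flatness can fail for closed subgroup schemes of smooth group schemes over a DVR. For instance, the closed subgroup scheme of the constant group scheme $\mathrm{Spec}\,\lri[t]/(t^2-t)$ cut out by the additional equation $\pi t = 0$ (with $\pi$ a uniformiser and $p$ odd; one checks $(t^2-t,\pi t)$ is a Hopf ideal) has trivial generic fibre and special fibre $\Z/2\Z$; both fibres are smooth of dimension $0$, yet the non-identity $\F_q$-point does not lift to an $\lri$-point. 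In the present situation the flatness of $Z = \Cen_{\SL_2/\lri}(x)$ is essentially equivalent to the lifting statement you are trying to prove, so it cannot be obtained for free. The missing idea is the vanishing of the lifting obstruction: if $g_0$ reduces into $\Cen_{\SL_2(\F_q)}(\overline{x})$ and $\mathrm{Ad}(g_0)x = x + \pi^n z$ with $n \geq 1$, the obstruction to improving $g_0$ modulo $\mfp^{n+1}$ by an element of $\SL_2^n(\lri)$ is the class of $\overline{z}$ in the one-dimensional space $\mathrm{coker}(\ad(\overline{x}))$. It vanishes because $\det(\mathrm{Ad}(g_0)x) = \det(x)$ forces $\Tr(\overline{x}\,\overline{z}) = 0$, while for regular $\overline{x}$ and $p$ odd the image of $\ad(\overline{x})$ is exactly the orthogonal complement of $\overline{x}$ with respect to the (nondegenerate) trace form. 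With that supplement --- or simply by falling back on the case-by-case Hensel argument --- your proof is complete.
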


\begin{rem*}
  Alternatively, a direct argument shows that under the conjugation
  action by $\GL_2(\lri)$ the elements of $\spl_2(\lri)^*$ fall into
  orbits represented by matrices of the form
  \begin{align*}
    \left( \begin{smallmatrix}
        0 & 1 \\
        \pi^n \nu & 0
      \end{smallmatrix} \right) & \text{ with $n \in \N \cup
      \{\infty\}$
      and $\nu \in \lri \setminus \mfp$,} \\
    \left( \begin{smallmatrix}
        \lambda & 0 \\
        0 & -\lambda
      \end{smallmatrix} \right) & \text{ with $\lambda \in \lri
      \setminus \mfp$,} \\
    \left( \begin{smallmatrix}
        0 & 1 \\
        \mu & 0
      \end{smallmatrix} \right) & \text{ with $\mu \in \lri \setminus
      \mfp$, not a square modulo $\mfp$.}
  \end{align*}
  A short computation shows that the centralisers of these matrices in
  $\SL_2(\lri)$ are, respectively,
  \begin{align*}
    \left\{ \left( \begin{smallmatrix}
          a & b \\
          \pi^n \nu b & a
        \end{smallmatrix} \right) \right. & \left. \mid a,b \in \lri
      \text{ with } a^2 - \pi^n \nu b^2 = 1 \right\}, \\
    \left\{ \left( \begin{smallmatrix}
          a & 0 \\
          0 & b
        \end{smallmatrix} \right) \right. & \left. \mid a,b \in \lri
      \text{ with } ab = 1 \right\}, \\
    \left\{ \left( \begin{smallmatrix}
          a & b \\
          \mu b & a
        \end{smallmatrix} \right) \right. & \left. \mid a,b \in \lri
      \text{ with } a^2 - \mu b^2 = 1 \right\}.
  \end{align*}

  We may assume that $x$ is one of the listed
  representatives.  The centraliser of $\overline{x}$ in
  $\SL_2(\F_q)$ has a similar form as that of $x$; cf.\ our
  discussion above.  Aided by Hensel's lemma, one successfully lifts
  any given element $g_0 \in
  \Cen_{\SL_2(\lri)}(\overline{x})$ to an element $g \in
  \Cen_{\SL_2(\lri)}(\overline{x})$.
\end{rem*}

In any case, if $\theta \in \Irr(N)$ is represented by the adjoint
orbit of $x \vert_n := x + \mfp^n \mfg \in (\mfg/ \mfp^n \mfg)^*$, we
deduce that $\Cen_G(x) N = \Cen_G(x \vert_n) N =
\Cen_G(\overline{x})$, hence
$$
I_G(\theta) = \Cen_G(\overline{x}) \qquad \text{and} \qquad
I_G(\theta)/N \cong \Cen_{\SL_2(\F_q)}(\overline{x}).
$$
The isomorphism types of these groups are given by
Table~\ref{table_sl2_2}, where $\mu_2(\mathbb{F}_q)$ is the group of
square roots of unity in $\mathbb{F}_q^*$, we denote by
$\mathbb{F}_q^+$ the additive group of the field $\mathbb{F}_q$, and
$\ker(N_{\F_{q^2} \vert \F_q})$ is the multiplicative group of
norm-$1$ elements in $\mathbb{F}_{q^2} \vert \mathbb{F}_q$.

Note that, if $\theta$ is of type $2$a or $2$b, the inertia group
quotient $I_G(\theta)/N$ has order coprime to $p$.  This implies that
$\theta$ can be extended to a character $\hat \theta$ of
$I_G(\theta)$.  If $\theta$ is of type $1$, the inertia group is a
Sylow pro-$p$ subgroup of $\SL_2(\lri)$, and we can draw the same
conclusion based on \cite[Theorem~2.3]{AnNi08}: being an algebra
group, the character degrees of $I_G(\theta)$ are powers of $q$, and
since $\lvert I_G(\theta):N \rvert = q$, Lemma~7.4 in
\cite{AvKlOnVo10+} shows that $\theta$ extends to a character $\hat
\theta$ of $I_G(\theta)$. (If $2e<p-1$, the Sylow pro-$p$ subgroup of
$\SL_2(\lri)$ corresponds to a potent and saturable $\lri$-Lie
lattice; cf.\ \cite{Kl05}.  Hence one can apply
\cite[Corollary~3.2]{AvKlOnVo10+} to show that its character degrees
are powers of $q$.)  Since the inertia group quotients are abelian, it
follows that in all cases $\zeta_{G,\theta}(s) =
\zeta_{I_G(\theta)/N}(s) = \lvert I_G(\theta) : N \rvert$.

The final task is to connect the formula
\begin{equation} \label{equ:Clifford} \zeta_G(s) = \sum_{\theta \in
    \Irr(N)} \theta(1)^{-s} \cdot \lvert G : I_G(\theta) \rvert^{-1-s}
  \lvert I_G(\theta):N \rvert,
\end{equation}
resulting from \eqref{equ:hochheben}, and the explicit formula for
$\zeta_N(s) = \sum_{\theta \in \Irr(N)} \theta(1)^{-s}$ which we
obtained in Section~\ref{sec:expl_form_sl2}.  We parameterise the
non-trivial characters in $\Irr(N)$ by means of the affine cone
$W(\lri) = \big( \lri^3 \big)^*$, uniformly for all levels, as in the
integral formula~\eqref{equ:integral_neu}.  The affine cone $W(\lri)$
decomposes as a disjoint union of subsets $W(\lri)^{\text{[$1$]}}$,
$W(\lri)^{\text{[$2$a]}}$ and $W(\lri)^{\text{[$2$b]}}$ corresponding
to representations of types $1$, $2a$ and $2b$.  According to our
explicit element count modulo $\mfp$ (see Table~\ref{table_sl2_1}), we
have $\mu(W(\lri)^{\text{[$1$]}}) = q^{-1} (1-q^{-2})$,
$\mu(W(\lri)^{\text{[$2$a]}}) = (1-q^{-2})/2$ and
$\mu(W(\lri)^{\text{[$2$b]}}) = (1-q^{-1})^2 /2$.  With this
preparation we may write
$$
\zeta_N(s) = 1 + q^3 \left( \mu(W(\lri)^{\text{[$1$]}}) +
  \mu(W(\lri)^{\text{[$2$a]}}) + \mu(W(\lri)^{\text{[$2$b]}})
\right) (1 - q^{1-s})^{-1},
$$
and this yields
\begin{equation}\label{equ:SL2_Andrei}
\begin{split}
  \zeta_{\SL_2(\lri)}(s) & = \zeta_{\SL_2(\F_q)}(s) + q^3
  \left( \mu(W(\lri)^{\text{[$1$]}}) ((q^2 -1)/2)^{-1-s} 2q  \right. + \\
  & \qquad \left. \mu(W(\lri)^{\text{[$2$a]}}) (q^2 + q)^{-1-s} (q-1) \right. + \\
  & \qquad \left. \mu(W(\lri)^{\text{[$2$b]}}) (q^2 -q)^{-1-s} (q+1)
  \right) (1 - q^{1-s})^{-1} \\
  & = 1 + X_1 + \tfrac{q-3}{2} X_2 + 2 X_3 + \tfrac{q-1}{2} X_4 +
  2 X_5  + \phantom{x} \\
  & \qquad (1-qX_1)^{-1} \left( 4q X_2 X_5 + \tfrac{q^2 -1}{2} X_1 X_4
    + \tfrac{(q-1)^2}{2} X_1 X_2 \right)
\end{split}
\end{equation}
where $X_1 = q^{-s}$, $X_2 = (q+1)^{-s}$, $X_3 = ((q+1)/2)^{-s}$, $X_4
= (q-1)^{-s}$ and $X_5 = ((q-1)/2)^{-s}$.  The last formula is in
agreement with \cite[Theorem~7.5]{Ja06}.

It is worth pointing out that $\zeta_{\SL_2(\lri)}(s)$, unlike
$\zeta_{\SL_2^1(\lri)}(s)$, cannot be written as a rational function
in $q^{-s}$.  In \cite[Theorem~A]{AvKlOnVo10+}, we establish in a
rather general context local functional equations for the zeta
functions associated to families of pro-$p$ groups, such as
$\SL_2^1(\lri)$; cf.~Theorem~\ref{thm:SL2}.  It would be very
interesting if these could be meaningfully extended to the zeta
functions of larger compact $p$-adic analytic groups, such as
$\SL_2(\lri)$.

\begin{rem*}
  It would be interesting to see if the use of Clifford theory, as
  explored in the current section, can also be employed to establish
  the conjectural formula for the zeta function of $\SL_2^1(\Z_2)$
  stated in Conjecture~\ref{con:SL_2^1}.  For this one would start
  from our analysis of the zeta function $\zeta_{\SL_2^2(\Z_2)}(s)$;
  cf.~Theorem~\ref{thm:SL2}.  
\end{rem*}

\subsection{The group $\SL_2(\Z_2)$} \label{sec:full_sl_2_p=2} The
paper~\cite{NoWo76} contains an explicit construction of the
irreducible representations of the group $\SL_2(\Z_2)$.  This is
achieved by decomposing Weil representations associated to binary
quadratic forms and by considering tensor products of certain
components of such Weil representations.  To complement
\eqref{equ:SL2_Andrei} we record the following immediate consequence
of the work in~\cite{NoWo76}.

\begin{thm} 
  We have
  $$
  \zeta_{\SL_2(\Z_2)}(s) = \frac{(4 - 2^{1-s} - 5 \cdot 2^{1-2s} +
    2^{1-3s}) + 3^{-s} (28 + 2^{1-s} - 5 \cdot 2^{1-2s} + 2^{1-3s})}{1
    - 2^{1-s}}.
  $$
\end{thm}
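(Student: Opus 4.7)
The plan is to read off from the explicit construction of Nobs and Wolfart in \cite{NoWo76} the complete list of continuous irreducible representations of $\SL_2(\Z_2)$, group them by degree, and sum their contributions to the Dirichlet series. Since $\SL_2(\Z_2)$ is FAb, every irreducible continuous representation factors through some finite quotient $\SL_2(\Z/2^n\Z)$, and $\zeta_{\SL_2(\Z_2)}(s)$ is the term-by-term limit of $\zeta_{\SL_2(\Z/2^n\Z)}(s)$ as $n \to \infty$, so the enumeration reduces to a level-by-level book-keeping exercise.

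First I would identify the low-level contributions. The reduction onto $\SL_2(\F_2) \cong S_3$ accounts for two one-dimensional and one two-dimensional representation, and the next quotient $\SL_2(\Z/4\Z)$ contributes finitely many additional small-degree representations. Together with the first few components of the Weil representations at levels $n \le 3$, these account for the polynomial-in-$2^{-s}$ correction $4 - 2^{1-s} - 5\cdot 2^{1-2s} + 2^{1-3s}$ appearing in the numerator, once the contribution of the infinite tail has been subtracted off.

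Next I would treat the infinite families at high levels. According to \cite{NoWo76}, at each sufficiently large level $n$ the genuinely new irreducibles arise as components of Weil representations $\omega_Q$ attached to binary quadratic forms $Q$ over $\Z_2$, or as tensor products of two such components; the number of new representations of each degree at level $n$ stabilises, and their degrees are of the form $c \cdot 2^{n-1}$ for a small fixed set of integers $c$. Summing the resulting geometric series in $2^{1-s}$ produces a rational function with denominator $1-2^{1-s}$. The factor $3^{-s}$ in the second summand of the numerator I would trace to a distinguished infinite family of representations of degree $3 \cdot 2^{n-1}$, coming (roughly) from parabolic induction, i.e.\ from twisting Weil-representation components by the two-dimensional representation of $S_3 \cong \SL_2(\F_2)$.

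The main obstacle is the careful bookkeeping required in the unstable range $n \in \{1,2,3\}$: in residue characteristic $2$ the Weil-representation theory is genuinely delicate, and one has to determine precisely which components at each low level are new, which are inflated from a lower level, and which tensor products coincide with already-enumerated irreducibles, all without double-counting. Once the finite tables at the low levels have been compiled from \cite{NoWo76} and matched against the stable count in the tail, combining the contributions and simplifying becomes a routine manipulation of geometric series that yields the closed form in the statement.
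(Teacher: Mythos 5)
Your proposal follows essentially the same route as the paper: both proofs consist of reading off the classification of irreducible characters of $\SL_2(\Z_2)$ from Nobs--Wolfart, observing that all degrees are of the form $2^i$ or $3\cdot 2^i$, and summing the two resulting geometric series with ratio $2^{1-s}$, the numerator polynomials recording the deviations of the low-degree counts from the stable pattern. The only thing separating your plan from the paper's argument is the explicit bookkeeping you defer, namely the counts $r_1=4$, $r_2=6$, $r_4=2$, $r_{2^i}=3\cdot 2^{i-2}$ for $i\geq 3$, and $r_3=28$, $r_6=58$, $r_{12}=106$, $r_{3\cdot 2^i}=107\cdot 2^{i-2}$ for $i\geq 3$ (the latter assembled from contributions at levels $i+1$ through $i+4$).
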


\begin{proof}
  It follows by inspection of the classification results in \cite[pp.\
  522--524]{NoWo76} that the continuous irreducible characters of
  $G:=\SL_2(\Z_2)$ all have degrees of the form $2^i$ or $3\cdot2^i$,
  for $i\in\N_0$. Concerning $2$-power-degree characters, one has
  $$
  r_1(G) = 4,\quad r_2(G) = 6, \quad r_{2^2}(G)=2, \quad\text{ and
  }\quad r_{2^i}(G) = 3\cdot 2^{i-2}\text{ for }i\geq 3.
  $$ 
  This yields
  \begin{equation}
    \sum_{i=0}^\infty r_{2^i}(G)(2^{-s})^i=\frac{4 -2\cdot(2^{-s}) - 10
      \cdot 2^{-2s} + 2 \cdot 2^{-3s}}{1-2\cdot 2^{-s}}.\label{equ:2^i}
  \end{equation}
  The numbers of characters of degree $3\cdot 2^i$, for $i\in\N_0$,
  are given by
  $$
  r_3(G) = 28, \quad r_{3\cdot 2}(G) = 58, \quad r_{3\cdot 2^2}(G) =
  106, \quad\text{ and }\quad r_{3\cdot 2^i}(G) = 107\cdot 2^{i-2}
  \text{ for }i\geq 3.
  $$ Indeed, for $i\geq3$ the characters of degree
  $3\cdot 2^i$ come from levels $i+1,i+2,i+3$ and $i+4$, with
  contributions from these levels of $2^{i-2}$, $2^{i-1}$,
  $5\cdot2^{i+1}$ and $2^{i+4}$ characters, respectively. One checks
  that $2^{i-2} + 2^{i-1} + 5\cdot2^{i+1} + 2^{i+4} = 107\cdot
  2^{i-2},$ as claimed.  This yields
  \begin{equation}
    \sum_{i=0}^\infty r_{3\cdot
      2^i}(G)(3^{-s})(2^{-s})^i=3^{-s}\left(\frac{28 + 2\cdot2^{-s} - 10
        \cdot 2^{-2s} + 2 \cdot 2^{-3s}}{1-2\cdot
        2^{-s}}\right).\label{equ:3.2^i}
  \end{equation}
  Combining~\eqref{equ:2^i} and \eqref{equ:3.2^i} yields the claimed
  expression.
\end{proof}

The results in \cite{NoWo76} do not indicate how to compute the zeta
function of $\SL_2(\lri)$ for extension rings $\lri$ of $\Z_2$.  In
view of our earlier computations it would be particularly interesting
to consider the case where $\lri$ is an unramified extension of
$\Z_2$.

\section{Explicit formulae for subgroups of quaternion groups
  $\SL_1(\Ldr)$}
\label{sec:skew_fields}

The aim of this section is to provide a setup for computing the zeta
function of the norm one group $\SL_1(\Ldr)$ of a central division
algebra $\Ldr$ over a $\mfp$-adic field~$\lfi$.  Our approach leads to
immediate consequences in the special case where the Schur index
$\ell$ of $\Ldr$ over $\lfi$ is a prime number.  Explicit formulae are
given for the zeta functions of norm one groups of non-split
quaternion algebras; see Theorem~\ref{thm:quaternions} below.

\subsection{General division algebras} \label{sec:skew_sub_1} Let
$\Ldr$ denote a central division algebra of Schur index $\ell \geq 2$
over a $\mfp$-adic field~$\lfi$.  Let $\lri$ denote the valuation ring
in $\lfi$, with maximal ideal $\mfp$, and let $\ldr$ denote the
maximal compact subring of $\Ldr$, with maximal ideal $\mfP$.  Write
$q$ and $p$ for the cardinality and characteristic of the residue
field of $\lri$.

We consider the compact $p$-adic analytic group $G := \SL_1(\Ldr) =
\SL_1(\ldr)$ of norm-$1$ elements in $\Ldr$ and its principal
congruence subgroups $G_m := \SL_1^m(\Ldr) = \SL_1(\Ldr) \cap (1 +
\mfP^m)$, $m \in \N$.  We remark that the resulting congruence
filtration of $G$ is a refinement of the filtration that one would get
from restriction of scalars to $\lri$ and defining congruence
subgroups in terms of $\mfp$; this justifies the slight difference in
notation from Section~\ref{sec:p-adic_integral}.  The group $G/G_1$ is
isomorphic to the multiplicative group of norm-$1$ elements of
$\ldr/\mfP \cong \F_{q^\ell}$ over $\lri/\mfp \cong \F_q$, and hence
cyclic of order $(q^\ell -1)/(q-1)$.  Each of the quotients
$G_m/G_{m+1}$, $m \in \N$, embeds into the additive group $\ldr / \mfP
\cong \F_{q^\ell}$ and is thus an elementary abelian $p$-group.  It
follows that $G_1$ is the unique Sylow pro-$p$ subgroup of~$G$.  The
$\lfi$-Lie algebra associated to the group $\SL_1(\Ldr)$ is
$\spl_1(\Ldr)$, consisting of all trace-$0$ elements of $\Ldr$.

To begin with, we derive the following consequence of
Theorem~\ref{thm:abscissa_estimate} and its
Corollary~\ref{cor:abscissa_semisimple}, which is based on the extra
assumption that $\ell$ is prime.

\begin{cor} \label{cor:abscissa_skew} Let $\Ldr$ be a central division
  algebra of prime Schur index $\ell$ over $\lfi$.  Then the abscissa
  of convergence of $\zeta_{\SL_1(\Ldr)}(s)$ is equal to $2/\ell =
  2r_\textup{abs} / \lvert \Phi_\textup{abs} \rvert$, where
  $r_\textup{abs}$ is the absolute rank of $\spl_1(\Ldr)$ and
  $\Phi_\textup{abs}$ denotes the absolute root system associated to
  $\spl_1(\Ldr)$.
\end{cor}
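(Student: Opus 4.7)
The plan is to apply Theorem~\ref{thm:abscissa_estimate} directly to $\SL_1(\Ldr)$, exploiting the fact that in a division algebra of prime degree every nonzero trace-zero element has a centralizer of the smallest possible dimension, so the two bounds in that theorem will coincide. First, since the abscissa of convergence is a commensurability invariant by Remark~(1) after Theorem~\ref{thm:abscissa_estimate}, I may pass to $\mathsf{G}^m = \exp(\mfp^m \mfg)$ for sufficiently large permissible $m$, where $\mfg = \spl_1(\ldr)$ is regarded as an $\lri$-Lie lattice by restriction of scalars. Since $\lfi \otimes_\lri \mfg = \spl_1(\Ldr)$ is simple as an $\lfi$-Lie algebra of dimension $d = \ell^2 - 1$, the hypotheses of Theorem~\ref{thm:abscissa_estimate} are met.

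The crux of the argument is to compute $\sigma(\mfg)$ and $\rho(\mfg)$ via the description in \eqref{equ:sigmarho_centraliser} as the maximal, respectively minimal, $\lfi$-dimensions of centralizers of nonzero elements of $\spl_1(\Ldr)$. The decisive algebraic observation is that for any nonzero $x \in \spl_1(\Ldr)$, the subfield $\lfi(x) \subseteq \Ldr$ has degree over $\lfi$ dividing $\ell$; it cannot equal $1$, because a central trace-zero element in characteristic $0$ must vanish; and since $\ell$ is prime, $[\lfi(x):\lfi] = \ell$. The centralizer $\Cen_\Ldr(x)$ is a division subalgebra containing $\lfi(x)$, so its $\lfi$-dimension lies in $\{\ell, \ell^2\}$; the latter would force $x$ to be central, which is already excluded. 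Hence $\Cen_\Ldr(x) = \lfi(x)$, and intersecting with the trace-zero part yields $\dim_\lfi \Cen_{\spl_1(\Ldr)}(x) = \ell - 1$ uniformly in the nonzero element $x$.

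Substituting into \eqref{equ:sigmarho_centraliser} gives $d - 2\sigma(\mfg) = d - 2\rho(\mfg) = \ell - 1$, so $\sigma(\mfg) = \rho(\mfg) = \ell(\ell-1)/2$, and both bounds of Theorem~\ref{thm:abscissa_estimate} collapse to the common value $(\ell-1)/(\ell(\ell-1)/2) = 2/\ell$. To identify this with $2 r_\textup{abs}/\lvert\Phi_\textup{abs}\rvert$, I extend scalars to an algebraic closure: $\spl_1(\Ldr) \otimes_\lfi \overline{\lfi} \cong \spl_\ell(\overline{\lfi})$ is of type $A_{\ell-1}$, so $r_\textup{abs} = \ell-1$ and $\lvert\Phi_\textup{abs}\rvert = \ell(\ell-1)$, as required. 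The main obstacle in this plan is the uniform centralizer dimension computation, which depends essentially on the primality of $\ell$: for composite $\ell$ proper intermediate division subalgebras can arise, the invariants $\sigma$ and $\rho$ will typically differ, and Theorem~\ref{thm:abscissa_estimate} alone will yield only an interval of possible values. Note also that the generic upper bound supplied by Corollary~\ref{cor:abscissa_semisimple}, namely $(\ell^2-1)/(\ell-1) - 2 = \ell - 1$, is far too weak here; it is precisely the anisotropy of $\Ldr$ that makes Theorem~\ref{thm:abscissa_estimate} deliver the sharp value.
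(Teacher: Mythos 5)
Your proposal is correct and follows essentially the same route as the paper: reduce via commensurability invariance to Theorem~\ref{thm:abscissa_estimate} and \eqref{equ:sigmarho_centraliser}, then show that every nonzero $x\in\spl_1(\Ldr)$ has $\dim_\lfi\Cen_{\spl_1(\Ldr)}(x)=\ell-1$ (the paper gets this in one step from the Centraliser Theorem, $\ell^2=[\lfi(x):\lfi]\,[\Cen_\Ldr(x):\lfi]$ with $[\lfi(x):\lfi]=\ell$ by primality, which also cleanly justifies your assertion that $[\Cen_\Ldr(x):\lfi]\in\{\ell,\ell^2\}$), forcing $\sigma=\rho=\ell(\ell-1)/2$ and collapsing the two bounds to $2/\ell$.
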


We remark that a more complex argument shows that the conclusion of
the corollary remains true even if the Schur index $\ell$ is not
prime; see~\cite[Theorem~7.1]{LaLu08}.

\begin{proof}[Proof of Corollary~\ref{cor:abscissa_skew}]
  The absolute rank of $\spl_1(\Ldr)$ is $r_\textup{abs} = \ell-1$,
  and the size of the absolute root system associated to
  $\spl_1(\Ldr)$ is $\lvert \Phi_\textup{abs} \rvert = \ell^2 - \ell$.
  Hence $2/\ell = 2r_\textup{abs} / \lvert \Phi_\textup{abs} \rvert$.
  Clearly, $\dim_\lfi(\spl_1(\Ldr)) = \ell^2 -1$.  Recall that the
  abscissa of convergence $\alpha(\SL_1(\Ldr))$ is a commensurability
  invariant of the group $\SL_1(\Ldr)$.  Thus, in view of
  Theorem~\ref{thm:abscissa_estimate} and the equations
  \eqref{equ:sigmarho_centraliser}, it suffices to prove that
  $\dim_\lfi \Cen_{\spl_1(\Ldr)}(x) = \ell -1$ for all $x \in
  \spl_1(\Ldr) \setminus \{0\}$.  Indeed, this will imply
  $\sigma(\spl_1(\ldr)) = \rho(\spl_1(\ldr)) = \ell (\ell-1)/2$ and
  the result follows.

  Let $x \in \spl_1(\Ldr) \setminus \{0\}$.  Then $\lfi(x) \vert \lfi$
  is a non-trivial field extension, and the Centraliser Theorem for
  central simple algebras yields $\ell^2 = \lvert \Ldr : \lfi \rvert =
  \lvert \lfi(x) : \lfi \rvert \lvert \Cen_\Ldr(x) : \lfi \rvert$.
  Since $\ell$ is assumed to be prime, this implies $\lvert
  \Cen_\Ldr(x) : \lfi \rvert = \ell$, and hence $\dim_\lfi
  \Cen_{\spl_1(\Ldr)}(x) = \ell -1$.
\end{proof}

As a step toward the explicit computation of the zeta functions of
$G_1$ and $G$, we describe sufficient conditions for applying the
Kirillov orbit method to capture the irreducible complex characters of
the group $G_1$.

\begin{pro}\label{pro:div_ring_sat_pot}
  Let $\Ldr$ be a central division algebra of Schur index $\ell$ over
  the $\mfp$-adic field $\lfi$.  Let $\ldr$ be the maximal compact
  subring of $\Ldr$, and suppose that $p \nmid \ell$, where $p$
  denotes the residue field characteristic of $\lfi$.

  Then $G_1 =\SL_1^1(\ldr)$ is an insoluble maximal $p$-adic analytic
  just infinite pro-$p$ group.  Furthermore, if $e(\lfi,\Q_p)
  \ell < p-1$, then $G_1$ is potent and saturable.
\end{pro}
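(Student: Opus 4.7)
The plan is to introduce the $\lri$-Lie lattice $\mfh := \spl_1(\Ldr) \cap \mfP$ of trace-zero elements of the maximal ideal $\mfP$ of $\ldr$ and to read all four assertions off from its structure. Two standing facts are used throughout. First, $\ldr$ is a (non-commutative) totally ramified extension of $\lri$ of degree $\ell$, so the ramification index of $\ldr$ over $\Z_p$ equals $e(\lfi,\Q_p)\,\ell$. Second, $\spl_1(\Ldr) = \lfi \otimes_\lri \mfh$ is an inner $\lfi$-form of $\spl_\ell$, hence an absolutely simple $\lfi$-Lie algebra.

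First I would dispense with the elementary claims. Since $G$ is compact $p$-adic analytic, so is the open subgroup $G_1$. The filtration $(G_m)_{m \geq 1}$ exhibited in the excerpt has elementary abelian $p$-quotients $G_m/G_{m+1} \hookrightarrow (\ldr/\mfP, +)$, so $G_1$ is pro-$p$; the assumption $p \nmid \ell$ makes $\lvert G : G_1 \rvert = (q^\ell - 1)/(q - 1)$ coprime to $p$, so $G_1$ is the unique Sylow pro-$p$ subgroup of $G$ and in particular is maximal as a pro-$p$ subgroup. For just infiniteness I would use the standard Lie-theoretic argument: any infinite closed normal subgroup $N \trianglelefteq G_1$ has strictly positive $p$-adic dimension, and its associated Lie subalgebra of $\mfh$ is $\mathrm{Ad}(G_1)$-stable; after base change to $\lfi$ it becomes a nonzero ideal of the simple algebra $\spl_1(\Ldr)$, hence equal to $\spl_1(\Ldr)$ itself, so that $N$ is open in $G_1$. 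Insolubility is then immediate, since $\spl_1(\Ldr)$ is a non-abelian simple Lie algebra, ruling out pro-solubility of $G_1$.

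For the final claim, under $e(\lfi,\Q_p)\,\ell < p - 1$, the plan is to invoke the standard Lazard-type criterion, as recorded in \cite[Proposition~2.3]{AvKlOnVo10+}, which ensures that the $\lri$-Lie lattice attached to an open pro-$p$ subgroup is both potent and saturable whenever the ramification index of its natural base ring over $\Z_p$ is strictly less than $p-1$. Here the relevant base ring is $\ldr$, of ramification index $e(\lfi,\Q_p)\,\ell$ over $\Z_p$, so the hypothesis matches exactly. The main obstacle I anticipate is handling the non-commutativity of $\ldr$ when applying the cited criterion, since the standard statements are phrased for lattices over commutative valuation rings. I would deal with this by regarding $\mfh$ as an $\lri$-Lie lattice and tracking $\Pi$-adic valuations of iterated brackets, where $\Pi$ is a uniformiser of $\ldr$ with $v_p(\Pi) = 1/(e(\lfi,\Q_p)\,\ell)$; the inequality $e(\lfi,\Q_p)\,\ell < p - 1$ then yields directly both Lazard's saturability bound and the potency relation $[\mfh, \ldots, \mfh] \subseteq p\mfh$ with $p-1$ nested brackets.
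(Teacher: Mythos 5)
The central gap is in the word \emph{maximal}. In this proposition (and in the cited reference \cite[\S~III~e]{KlLePl97}) ``maximal'' is a technical attribute of just infinite pro-$p$ groups: $G_1$ must not be an open subgroup of any strictly larger just infinite pro-$p$ group, which for the insoluble $p$-adic analytic case amounts to showing that $G_1$ is a Sylow pro-$p$ subgroup of $\Aut(\spl_1(\Ldr)) \cong \PGL_1(\Ldr)$ --- not merely of $G = \SL_1(\Ldr)$. Your argument only establishes the latter. The paper bridges this via the exact sequence
$1 \rightarrow \mu_\ell(\lfi^*) \rightarrow \SL_1(\Ldr) \rightarrow \PGL_1(\Ldr) \rightarrow \lfi^*/(\lfi^*)^\ell \rightarrow 1$,
using $p \nmid \ell$ twice: to see that $G_1 \cap \mu_\ell(\lfi^*) = 1$ and that $\lvert \lfi^*/(\lfi^*)^\ell\rvert$ is prime to $p$, so that $G_1$ maps isomorphically onto a Sylow pro-$p$ subgroup of $\PGL_1(\Ldr)$. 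Relatedly, your stated use of $p \nmid \ell$ is vacuous: $(q^\ell-1)/(q-1) = 1 + q + \cdots + q^{\ell-1} \equiv 1 \bmod p$ regardless of $\ell$, so that index is always prime to $p$; the hypothesis is genuinely needed only for the comparison with $\PGL_1(\Ldr)$, which your proposal never makes. Your just-infiniteness and insolubility arguments via simplicity of $\spl_1(\Ldr)$ are fine.

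On the last claim, your potency computation is essentially the paper's: $\ell-1$ iterated brackets raise $\mfP$-valuation by at least $p-2 \geq e(\lfi,\Q_p)\ell$, giving $\gamma_{p-1}(G_1) \subseteq \gamma_{e\ell+1}(G_1) = G_1^p$. For saturability, however, \cite[Proposition~2.3]{AvKlOnVo10+} does not apply as stated: it concerns principal congruence sublattices $\mfp^m\mfg$ of an $\lri$-Lie lattice, and $\spl_1(\ldr) \cap \mfP$ is not of that form (the $\mfP$-filtration is strictly finer than the $\mfp$-filtration). Your fallback --- a direct Lazard-type valuation argument using $v(\Pi) = 1/(e\ell) > 1/(p-1)$ --- is plausible but is left entirely as a sketch; in particular the saturation axiom (surjectivity of $p$-th powers onto the appropriate layer) is not addressed. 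The paper instead embeds $G_1$ into a Sylow pro-$p$ subgroup $S$ of $\SL_\ell(\Lri)$ over the unramified splitting field, applies Lazard's criterion \cite[III~(3.2.7)]{La65} to $S$, and then descends saturability to $G_1$ by the argument of \cite{Kl05} --- a step that is not automatic, since saturability does not pass to arbitrary closed subgroups. Either route can be made to work, but as written yours has not closed this step.
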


\begin{proof}
  It is known that $\Aut(\spl_1(\Ldr)) \cong \PGL_1(\Ldr)$;
  see~\cite[\S~XI~e]{KlLePl97}.  The groups $\SL_1(\Ldr)$ and
  $\PGL_1(\Ldr)$ are, of course, closely related.  Indeed, $Z :=
  \textup{Z}(\GL_1(\Ldr)) = \lfi^*$ and the norm map induces an
  isomorphism
  $$
  \GL_1(\Ldr) / Z \cdot \SL_1(\Ldr) \cong \lfi^* / (\lfi^*)^\ell.
  $$
  Denote by $\mu_\ell(\lfi^*)$ the finite subgroup of $\lfi^*$
  consisting of all elements whose order divides~$\ell$.  Since $p
  \nmid \ell$, reduction modulo $\mfp$ maps $\mu_\ell(\lfi^*) \leq
  \lri^*$ injectively onto the cyclic subgroup of order
  $\gcd(\ell,q-1)$ of $\F_q^*$.  Moreover, $G_1 \cap Z = G_1 \cap
  \mu_\ell(\lfi^*) = 1$, and the order of $\lfi^*/(\lfi^*)^\ell$ is
  not divisible by $p$.  Hence we see from the exact sequence
  $$
  1 \rightarrow \mu_\ell(\lfi^*) \rightarrow \SL_1(\Ldr) \rightarrow
  \PGL_1(\Ldr) \rightarrow \lfi^*/(\lfi^*)^\ell \rightarrow 1.
  $$
  that $G_1$ is isomorphic to a Sylow pro-$p$ subgroup of the compact
  group $\PGL_1(\Ldr)$.  It follows that $G_1$ is an insoluble maximal
  $p$-adic analytic just infinite pro-$p$ group;
  cf.~\cite[\S~III~e]{KlLePl97}.

  Let $\Lfi$ be a splitting subfield of $\Ldr$, unramified and of
  degree $\ell$ over $\lfi$.  Then the $\Lfi$-algebra isomorphism
  $\Lfi \otimes_\lfi \Ldr \cong \Mat_\ell(\Lfi)$ provides an embedding
  of $G_1$ into a Sylow pro-$p$ subgroup $S$ of $\SL_\ell(\Lri)$,
  where $\Lri$ denotes the valuation ring of $\Lfi$.  Suppose that $e
  \ell < p-1$, where $e = e(\lfi,\Q_p) = e(\Lri,\Z_p)$.
  From \cite[III (3.2.7)]{La65} we conclude that $S$ is saturable.
  Now we conclude as in~\cite[Proof of Theorem 1.3]{Kl05} that $G_1$
  is saturable.  Moreover, $\gamma_{p-1}(G_1) \subseteq \gamma_{e \ell
    + 1}(G_1) = G_1^p$ (cf.\ \cite[\S~1]{PrRa88}) so that $G_1$ is
  potent.
\end{proof}

% altes Argument
%
%It is known that
%$\Aut(\spl_1(\Ldr)) \cong \PGL_1(\Ldr)$; see~\cite[\S~XI~e]{KlLePl97}.
%The groups $G = \SL_1(\Ldr)$ and $\PGL_1(\Ldr)$ are, of course,
%closely related.  Indeed, $Z := \textup{Z}(\GL_1(\Ldr)) = \lfi^*$ and
%the norm map induces an isomorphism
%$$
%\GL_1(\Ldr) / Z \cdot \SL_1(\Ldr) \cong \lfi^* / (\lfi^*)^\ell.
%$$
%Denote by $\mu_\ell(\lfi^*)$ the finite subgroup of $\lfi^*$
%consisting of all elements whose order divides~$\ell$.  Let us assume
%that $p \nmid \ell$.  Then reduction modulo $\mfp$ maps
%$\mu_\ell(\lfi^*) \leq \lri^*$ injectively onto the cyclic subgroup
%of order $\gcd(\ell,q-1)$ of $\F_q^*$.  Moreover, $G_1 \cap Z = G_1
%\cap \mu_\ell(\lfi^*) = 1$, and the order of $\lfi^*/(\lfi^*)^\ell$ is
%not divisible by $p$.  Hence we see from the exact sequence
%$$
%1 \rightarrow \mu_\ell(\lfi^*) \rightarrow \SL_1(\Ldr) \rightarrow
%\PGL_1(\Ldr) \rightarrow \lfi^*/(\lfi^*)^\ell \rightarrow 1.
%$$
%that $G_1 = \SL_1^1(\Ldr)$ is isomorphic to a Sylow pro-$p$ subgroup
%of the compact group $\PGL_1(\Ldr)$.  It follows that $G_1$ is an
%insoluble maximal $p$-adic analytic just infinite pro-$p$ group;
%cf.~\cite[\S~III~e]{KlLePl97}.
%
%Suppose further that $p \geq \ell^2 + 1$.  Then $G_1$ is saturable;
%cf.~\cite[Theorem~1.3]{Kl05}.  Moreover, $\gamma_{p-1}(G_1) \subseteq
%\gamma_{\ell+1}(G_1) = G_1^p$ (cf.\ \cite[\S~1]{PrRa88}) so that $G_1$
%is potent and its representations can be studied via the Kirillov
%orbit method.

\subsection{The quaternion case} \label{sec:quaternion_case} In this
section we consider the special case $\ell = 2$, where $\Ldr$ is a
non-split quaternion algebra over the $\mfp$-adic field $\lfi$, but
some of the arguments below are equally relevant in the more general
situation where $p \nmid \ell$.  Concretely, we compute explicit
formulae for the zeta functions of norm one quaternion groups
$\SL_1(\Ldr) = \SL_1(\ldr)$ and their principal congruence subgroups
$\SL_1^m(\ldr)$, as stated in Theorem~\ref{thm:quaternions} in the
introduction.

% \begin{thm} \label{thm:quaternions} Let $\Ldr$ be a non-split
%   quaternion algebra over a $\mfp$-adic field $\lfi$. Let $\ldr$
%   denote the maximal compact subring of $\Ldr$, and suppose that $2
%   e(\lfi,\Q_p) < p-1$, where $p$ denotes the residue field
%   characteristic of $\lfi$.

%   Then the  zeta functions of $\SL_1(\Ldr) =
%   \SL_1(\ldr)$ and its principal congruence subgroups $\SL_1^m(\ldr)$,
%   $m \in \N$, are
%   \begin{align*}
%     \zeta_{\SL_1(\ldr)}(s) & = \frac{(q+1) (1 -
%       q^{-s}) + 4 (q-1) ((q+1)/2)^{-s}}{1-q^{1-s}}, \\
%     \zeta_{\SL_1^m(\ldr)}(s) & = q^{3m} \frac{q -
%       q^{-1-s}}{1-q^{1-s}}.
%   \end{align*}
% \end{thm}

\begin{rem*}
  Similar formulae as the ones provided in
  Theorem~\ref{thm:quaternions} can be obtained for higher principal
  congruence subgroups $\SL_1^m(\ldr)$, even if the condition
  $e(\lfi,\Q_p) \ell < p-1$ imposed in the theorem is not satisfied.
  Our computation can be carried out in a similar fashion whenever the
  orbit method can be applied.
\end{rem*}

It is a natural and interesting problem to compute explicit formulae
for the zeta functions of norm one groups $\SL_1(\Ldr)$ and their
principal congruence groups, where the Schur index of $\Ldr$ over
$\lfi$ is greater than $2$.  Another interesting group to consider
would be $\SL_2(\ldr)$, where $\ldr$ is the maximal compact subring of
a non-split quaternion algebra $\Ldr$ over $\lfi$.

\begin{proof}[Proof of Theorem~\ref{thm:quaternions}]
  According to Proposition~\ref{pro:div_ring_sat_pot}, the pro-$p$
  group $G_1$ is saturable and potent.  Our first aim is to compute an
  explicit formula for the zeta functions of the principal congruence
  subgroups $G_m = \SL_1^m(\ldr)$, $m \in \N$.  Put $\mfg :=
  \spl_1(\ldr)$, and for $m \in \N$ let $\mfg_m := \spl_1^m(\ldr) =
  \spl_1(\ldr) \cap \mfP^m$ denote the $m$th principal congruence Lie
  sublattice so that $G_m = \exp(\mfg_m)$.  Here $\lri$ denotes, as
  usual, the valuation ring of $\lfi$, with maximal ideal $\mfp = \pi
  \lri$ generated by a uniformiser $\pi$.

  Since $p \not = 2$, we may choose $1$, $\mathbf{u}, \mathbf{v}$,
  $\mathbf{u} \mathbf{v}$ as a standard basis for $\Ldr$ over $\lfi$,
  where $\mathbf{u}^2 = a \in \lri$ is not a square modulo $\mfp$,
  $\mathbf{v}^2 = \pi$ and $\mathbf{u} \mathbf{v} = - \mathbf{v}
  \mathbf{u}$.  Writing $\mathbf{i} := \frac{1}{2} \mathbf{u}$,
  $\mathbf{j} := \frac{1}{2} \mathbf{v}$ and $\mathbf{k} :=
  \frac{1}{2} \mathbf{u} \mathbf{v}$ we have
  \begin{equation}\label{equ:ijk}
    [\mathbf{i},\mathbf{j}] = \mathbf{k}, \quad [\mathbf{i},\mathbf{k}]
    = a \mathbf{j}, \quad [\mathbf{j},\mathbf{k}] = -\pi \mathbf{i}.
  \end{equation}
  An $\lri$-basis for $\mfg_1$ is then given by $\pi \mathbf{i}$,
  $\mathbf{j}$, $\mathbf{k}$, with corresponding commutator matrix
  \begin{equation}\label{comm_matr_sl1}
    \mathcal{R}(\mathbf{Y}) =
    \begin{pmatrix}
      0    & \pi Y_3   & a \pi Y_2 \\
      -\pi Y_3 & 0     & -Y_1  \\
      -a \pi Y_2 & Y_1 & 0
    \end{pmatrix}.
  \end{equation}
  In view of Proposition~\ref{pro:zeta=poincare}, an argument similar
  as for the case $p=2$ and $e=1$ in Section~\ref{sec:expl_form_sl2}
  shows that
  $$
  \zeta_{\SL_1^m(\ldr)}(s) = q^{3(m-1)} \frac{q^2 -
    q^{-s}}{1-q^{1-s}} \quad \text{for $m \in \N$.}
  $$

  Our next aim is to deduce a formula for the zeta function of the
  group $G = \SL_1(\ldr)$, using Clifford theory, similarly as in
  Section~\ref{sec:full_sl_2}.  For this it is useful to record the
  following intermediate formula which comes from a similar argument
  as in Section~\ref{sec:expl_form_sl2}:
  \begin{equation} \label{equ:step_towards_sl1}
    \zeta_{\SL_1^1(\ldr)}(s) = 1 + \left( \mu(W(\lri)^{[1]}) q^{1-s} +
      \mu(W(\lri)^{[2]}) q^3 \right) \frac{1}{1-q^{1-s}},
  \end{equation}
  where $W(\lri)^{[1]} = \{ \mathbf{y} \in W(\lri) \mid y_1 \in \lri^*
  \}$ and $W(\lri)^{[2]} = \{ \mathbf{y} \in W(\lri) \mid y_1 \in \mfp
  \}$ have Haar measure $\mu(W(\lri)^{[1]}) = 1 - q^{-1}$ and
  $\mu(W(\lri)^{[2]}) = q^{-1}(1-q^{-2})$ respectively.

  We continue to write $G = \SL_1(\ldr)$ and put $N := G_1 =
  \SL_1^1(\ldr)$.  Since $G/N$ is cyclic, any irreducible character
  $\theta$ of $N$ can a priori be extended to an irreducible character
  $\hat \theta$ of its inertia group $I_G(\theta)$.  Thus, similarly
  as for the group $\SL_2(\lri)$, the central task consists in
  describing the inertia groups in order to apply
  \eqref{equ:Clifford}.  We show below that
  \begin{itemize}
  \item[(i)] $I_G(\theta) = G$, and hence $I_G(\theta)/N$ is cyclic
    of order $q+1$, if $\theta \in \Irr(G)$ corresponds to a
    co-adjoint orbit of a functional represented by an element of
    $W(\lri)^{[1]}$,
  \item[(ii)] $I_G(\theta) = \{1,-1\}N$, and hence $I_G(\theta)/N$ is
    cyclic of order $2$, if $\theta \in \Irr(G)$ corresponds to a
    co-adjoint orbit of a functional represented by an element of
    $W(\lri)^{[2]}$.
  \end{itemize}
  We conclude the proof by applying Clifford theory as in
  Section~\ref{sec:full_sl_2}. Putting together the general
  formula~\eqref{equ:Clifford}, the specific
  formula~\eqref{equ:step_towards_sl1} and statements~(i) and (ii), we
  deduce that
  \begin{align*}
    \zeta_{\SL_1(\ldr)}(s) & = \zeta_{C_{q+1}}(s) + \left(
      \mu(W(\lri)^{[1]})  q^{1-s} (q+1) + \right. \\
    & \qquad \left.  \mu(W(\lri)^{(2)}) q^3 ( (q+1)/2)^{-1-s} 2)
    \right) \frac{1}{1-q^{1-s}} \\
    & = \frac{(q+1) (1 - q^{-s}) + 4 (q-1) ((q+1)/2)^{-s}}{1-q^{1-s}}.
  \end{align*}

  It remains to justify the assertions (i) and (ii).  For this it is
  convenient to translate between the adjoint action of $G$ on $\mfg$
  and the co-adjoint action of $G$ on $\Hom_\lri(\mfg,\lri)$.  From
  \eqref{equ:ijk} one easily sees that the normalised Killing form
  $\kappa_0: \mfg \times \mfg \rightarrow \lri$ has the structure
  matrix
  $$
  [ \kappa_0( \cdot, \cdot)]_{(\mathbf{i},\mathbf{j},\mathbf{k})} =
  \left(
    \begin{smallmatrix}
      a & 0 & 0 \\
      0 & \pi & 0 \\
      0 & 0 & - a \pi
    \end{smallmatrix}
  \right)
  $$
  with respect to the basis $\mathbf{i}$, $\mathbf{j}$, $\mathbf{k}$.
  While $\kappa_0$ is degenerate over $\lri$, the form is
  non-degenerate over $\lfi$ and induces a bijective linear map
  $\iota_0: \spl_1(\Ldr) \rightarrow \Hom_\lfi(\spl_1(\Ldr),\lfi)$.
  We have $\mfg_{-1} := \spl_1(\ldr) \cap \mfP^{-1} =
  \iota_0^{-1}(\Hom_\lri(\mfg,\lri))$ and $\mfg_{-2} := \spl_1(\ldr)
  \cap \mfP^{-2} = \iota_0^{-1}(\Hom_\lri(\mfg_1,\lri))$.  The
  decomposition $\Hom_\lri(\mfg_1,\lri)^* = W(\lri)^{[1]} \, \dotcup
  \, W(\lri)^{[2]}$ corresponds to the decomposition 
  \begin{equation} \label{equ:g-2deco} 
    \mfg_{-2}^* = (\mfg_{-2}
    \setminus \mfg_{-1}) \, \dotcup \, (\mfg_{-1} \setminus \mfg).
  \end{equation}

  The action of $G/N$ on quotients $\mfg_m / \mfg_{m+1}$ of successive
  terms in the congruence filtration of $\mfg$ is described in
  \cite[\S~1]{PrRa88} and we will use a compatible notation as far as
  practical.  The division algebra $\Ldr$ contains an unramified
  extension $\Lfi = \lfi(\mathbf{i})$ of degree $2$ over $\lfi$ which
  is normalised by the uniformiser~$\Pi := \mathbf{j}$.  Let $F$
  denote the residue field of $\Lfi$, and let $\Phi$ denote the group
  of roots of unity in $\Lfi$.  Thus $F \cong \F_{q^2}$ and $\Phi \cup
  \{0\}$ is a set of representatives for the elements of $F$.  Observe
  that $N$ is complemented in $G$ by the subgroup $H$ of $\Phi$
  consisting of all roots of unity which are of norm $1$ over $\lfi$:
  we have $G = H \ltimes N$.  Accordingly, we will think of $G/N \cong
  H$ as the group of elements in the finite field $F$ which have norm
  $1$ over $f := \lri/\mfp \cong \F_q$.  Every element of $\ldr$ has a
  unique power series expansion in $\Pi$ with coefficients in $\Phi
  \cup \{0\}$.  For each $m \in \N$ this induces an embedding $\eta_m
  : \mfg_m / \mfg_{m+1} \hookrightarrow F$; we denote the image of
  $\eta_m$ by $F(m)$.  If $2 \nmid m$ then $F(m) = F$, and if $2 \mid
  m$ then $F(m) = \{ x \in F \mid \Tr_{F \vert f}(x) = 0 \}$.

  Clearly, for each $m \in \N$ the action of $G$ on $F(m)$ by
  conjugation factors through $N$ and is therefore determined by the
  action of $H$.  The latter is given by the explicit formula
  $$
  x^h = h^{1-q^m} \cdot x, \qquad \text{for $x \in F(m)$ and $h \in
    H$}.
  $$
  This finishes our preparations and we turn to the proof of
  assertions (i) and (ii) above.
 
  First we consider a character $\theta \in \Irr(N)$ corresponding to
  the co-adjoint orbit of $\omega \in \Irr(\mfg_1)$, where $\omega$ is
  represented by an element of $W(\lri)^{[1]}$ and has level $n$, say;
  cf.\ Section~\ref{subsec:basic_set_up}.  Then the inertia group
  $I_G(\theta)$ is equal to $CN$, where $C := \Cen_G(x + \mfg_{2n-2})$
  for a suitable $x \in \mfg_{-2} \setminus \mfg_{-1}$; see
  \eqref{equ:g-2deco}.  Here $\mfg_0 := \mfg$ if $n=1$.  We claim that
  $CN = G$, justifying~(i).  For this it is enough to prove that $H$
  centralises a suitable $N$-conjugate of $x$.  Since $H$ is a
  subgroup of the multiplicative group of the field $\Lfi$, it
  suffices to show that $x$ is $N$-conjugate to an element of $\Lfi$.
  For this we construct recursively a sequence $x_0$, $x_1$, \ldots of
  $N$-conjugates of $x$ such that $x_i \equiv \lambda_i \pi^{-1}
  \mathbf{i}$ modulo $\mfg_{i-1}$ with $\lambda_i \in \lri \setminus
  \mfp$ for each index $i$.  From the construction one sees that the
  sequence converges and its limit is an $N$-conjugate of $x$ in
  $\Lfi$.  Since $x \in \mfg_{-2} \setminus \mfg_{-1}$, we can take
  $x_0 := x$.  Now suppose that $i \in \N_0$ and that
  $$
  x_i = \lambda_i \pi^{-1} \mathbf{i} + \mu \mathbf{j} + \nu
  \mathbf{k} \qquad \text{with $\lambda_i, \mu, \nu \in \lri$,
    $\lambda_i \not\in \mfp$ and $\mu \mathbf{j} + \nu
  \mathbf{k} \in \mfP^{i-1}$}
  $$
  is an $N$-conjugate of $x$.  Then $x_{i+1} := z^{-1} x_i z$ with $z
  := 1 - \lambda_i^{-1} \pi (\nu \mathbf{j} - a^{-1} \mu \mathbf{k})
  \in 1 + \mfP^{i+1} \subseteq N$ is an $N$-conjugate of $x$ and
  satisfies the desired congruence, modulo $\mfg_i$,
  \begin{align*}
    x_{i+1} & \equiv (1 + \lambda_i^{-1} \pi (\nu \mathbf{j} + a^{-1}
    \mu \mathbf{k})) (\lambda_i \pi^{-1} \mathbf{i} + (\mu \mathbf{j}
    + \nu \mathbf{k})) (1 - \lambda_i^{-1} \pi (\nu \mathbf{j} +
    a^{-1} \mu \mathbf{k})) \\
    & \equiv \lambda_i \pi^{-1} \mathbf{i} + (\mu \mathbf{j} + \nu
    \mathbf{k}) + [\nu \mathbf{j} + a^{-1} \mu \mathbf{k}, \mathbf{i}] \\
    & \equiv \lambda_i \pi^{-1} \mathbf{i}.
  \end{align*}

  Finally we consider a character $\theta \in \Irr(N)$ corresponding
  to the co-adjoint orbit of $\omega \in \Irr(\mfg_1)$, where $\omega$
  is represented by an element of $W(\lri)^{[2]}$ and has level $n$,
  say.  Then the inertia group $I_G(\theta)$ is equal to $CN$, where
  $C := \Cen_G(x + \mfg_{2n-2})$ for a suitable $x \in \mfg_{-1}
  \setminus \mfg$; see \eqref{equ:g-2deco}.  We claim that $CN =
  \{1,-1\} N$, justifying~(ii).  Since $x \in \mfg_{-1}$, we have
  $\{1,-1\} N \subseteq CN \subseteq \Cen_G(x + \mfg) = \Cen_H(x +
  \mfg) N$.  Hence it suffices to prove that $\Cen_H(x + \mfg) =
  \{1,-1\}$.  Indeed, multiplication by $\pi$ provides an
  $H$-equivariant isomorphism $\mfg_{-1}/\mfg \rightarrow
  \mfg_1/\mfg_2$.  The action of $h \in H$ by conjugation on
  $\mfg_1/\mfg_2$ corresponds to multiplication by $h^{1-q}$ on
  $F(1)$; to carry out the multiplication $h$ is considered as a
  element of the residue field $F$ with norm $1$ in $f$, as described
  above.  The group $H$ has order $q+1$, hence the kernel of $H
  \rightarrow H$, $h \mapsto h^{1-q}$ is equal to $\{1,-1\}$.  It
  follows that $\Cen_H(\tilde x) = \{1,-1\}$ for any non-zero element
  $\tilde x \in F(1)$.
\end{proof}

\section{Principal congruence subgroups of $\SL_3(\lri)$ for
  unramified $\lri$ of residue field characteristic
  $3$}\label{sec:sl3_Z3}

Let $\lri$ be a compact discrete valuation ring of characteristic $0$
and residue field characteristic $p$.  Except for the special case $p
= 3$, Theorem~E in \cite{AvKlOnVo10+} provides an explicit universal
formula for the zeta functions of principal congruence subgroups of
$\SL_3(\lri)$.  By a different approach, the same formula and indeed a
formula for the group $\SL_3(\lri)$ itself are derived in
\cite{AvKlOnVo09}.  In this section we complement the generic formulae
by proving Theorem~\ref{thm:SL3_p=3} which was stated in the
introduction.  This theorem provides explicit formulae for the zeta
functions of principal congruence subgroups of $\SL_3(\lri)$, where
$\lri$ has residue characteristic $3$ and is unramified over $\Z_3$.

% \begin{thm}%[{\cite[Theorem D]{AvKlOnVo09I}}]
%   \label{thm:SL3_p=3}
%   Let $\lri$ be a compact discrete valuation ring of
%   characteristic~$0$, with residue field of cardinality~$q$ and
%   characteristic $3$.  Suppose that $\lri$ is unramified over $\Z_3$.
%   Then, for all $m \in \N$, one has
%   \begin{equation*}
%     \zeta_{\SL_3^m(\lri)}(s) = q^{8m-4} \frac{(q^2-q^{-s}) (q^2
%       + q^{-s} + (q^4-1)q^{-2s} - q^{1-3s} + (q^4 - q^2 - q)
%       q^{-4s})}{(1 - q^{1-2s})(1 - q^{2-3s})}. 
%   \end{equation*}
% \end{thm}

Residue field characteristic $3$ was excluded from
\cite[Theorem~E]{AvKlOnVo10+}, whose proof is based on a geometric
description of the variety of irregular elements in the
$8$-dimensional Lie algebra $\spl_3(\lfi)$.  This description breaks
down when the map $\beta: \gl_3 \rightarrow \spl_3$, $\mathbf{x}
\mapsto \mathbf{x} - \Tr(\mathbf{x})/3$ used in
\cite[Section~6.1]{AvKlOnVo10+} displays bad reduction modulo $\mfp$.
Moreover, the translation of the relevant $\mfp$-adic integral via the
normalised Killing form becomes more technical.  In the present paper
we restrict our attention to unramified extensions $\lri$ of $\Z_3$
for simplicity.  Indeed, the results in
Section~\ref{sec:expl_form_sl2} suggest that analogous formulae which
are to cover the general case, including ramification, are likely to
become rather cumbersome to write down.

The method we employ is algebraic and somewhat closer to the approach
taken in~\cite{AvKlOnVo09}.  In fact, the arguments which we shall
supply can be employed \emph{mutatis mutandis} in the generic case $p
\not = 3$ and hence give an alternative, less geometric derivation of
the formula provided in \cite[Theorem~E]{AvKlOnVo10+}.  Moreover, our
calculations illustrate the algebraic meaning of the $\mfp$-adic
formalism developed and applied in~\cite{AvKlOnVo10+}.

\subsection{} Let $\lri$ be unramified over $\Z_p$ with residue field
$\lri/\mfp \cong \F_q$ of characteristic $p=3$.  Throughout the
section we will continue to write $p$ as far as convenient, while
keeping the concrete value $p=3$ in mind.  We remark that $p$ is also
a uniformiser for $\lri$, because $\lri$ be unramified over $\Z_p$,
and we will use $p$ instead of the symbol $\pi$.  Let $T_\lri = \{0\}
\cup \mu_{q-1}(\lri)$ denote the set of Teichm\"uller representatives
for $\lri$, which projects bijectively onto the residue field of
$\lri$.  More generally, for any $l \in \N$ we fix
\begin{equation}\label{equ:Teichmueller}
  T_\lri(l) := \left\{ \sum\nolimits_{j=0}^{l-1} t_j p^j \mid
    t_j \in T_\lri \text{ for } 0 \leq j < l \right\}
\end{equation}
as a set of representatives for $\lri / \mfp^l$.  As usual, $\lfi$
denotes the field of fractions of~$\lri$.

Let $m \in \N$.  Proposition~2.3 in \cite{AvKlOnVo10+} shows that $m$
is permissible for $\spl_3(\lri)$.  Thus the zeta function of the
$m$th principal congruence subgroup $\SL_3^m(\lri)$ is given by the
formula
\begin{equation}\label{equ:still}
  \zeta_{\SL_3^m(\lri)}(s) =
  q^{8m} \left( 1 + (1-q^{-1})^{-1} \mathcal{Z}_\lri(-s/2-1,3s-3) \right),
\end{equation}
which results from Proposition~\ref{pro:zeta=poincare} on setting $d =
\dim_\lfi \spl_3(\lfi) = 8$ and $\rho = 2^{-1} (\dim_\lfi \spl_3(\lfi)
- r_\text{abs}(\spl_3(\lfi))) = 3$;
see~\eqref{equ:sigmarho_centraliser}.  As indicated in
Section~\ref{subsec:basic_set_up}, the integral
$\mathcal{Z}_\lri(r,t)$ is intimately linked to the elementary
divisors of the commutator matrix $\mathcal{R}(\mathbf{y})$ for
$\spl_3(\lri)$, evaluated at points $\mathbf{y} \in W(\lri) = \left(
\lri^8 \right)^*$.  We observe that the commutator matrix
$\mathcal{R}(\mathbf{y})$, at $\mathbf{y} \in W(\lri)$, has Witt
normal form
$$
\left(
\begin{smallmatrix}
  0 & 1 & & & & & & \\
 -1 & 0 & & & & & & \\
    &   & 0 & 1 & & & & \\
    &   &-1 & 0 & & & & \\
    &   &   &   & 0  & p^a & & \\
    &   &   &   &-p^a&   0 & & \\
    &   &   &   &    &     & 0 & 0 \\
    &   &   &   &    &     & 0 & 0
\end{smallmatrix}
\right)
$$
so that all the information is condensed in a single parameter $a =
a(\mathbf{y}) \in \N_0 \cup \{\infty\}$.  

The integral $\mathcal{Z}_\lri(r,t)$ in \eqref{equ:integral_neu} is
defined so that it performs integration over the space $\mfp \times
\Hom_\lri(\spl_3(\lri),\lri)^*$ with respect to a particular choice of
coordinate system $(x,\mathbf{y}) \in \mfp \times W(\lri)$.  The
normalised Killing form $\kappa_0$ of $\spl_3(\lfi)$ is related to
the ordinary Killing form $\kappa: \spl_3(\lfi) \times \spl_3(\lfi)
\rightarrow \lfi$ by the equation $\kappa = 2 h^\vee \kappa_0 = 6
\kappa_0$; see Section~\ref{sec:adjointversuscoadjoint}.  In
\cite[Section~6]{AvKlOnVo10+}, we provided the structure matrix of the
normalised Killing form $\kappa_0$ with respect to the basis
\begin{gather}
  \mathbf{h}_{12} = \left(
  \begin{smallmatrix}
    1 & & \\
    & -1 & \\
    & & 0
  \end{smallmatrix} \right), \quad \mathbf{h}_{23} = \left(
  \begin{smallmatrix}
    0 & & \\
    & 1 & \\
    & & -1
  \end{smallmatrix} \right), \nonumber \\
  \mathbf{e}_{12} = \left(
  \begin{smallmatrix}
    0 & 1 & \\
    & 0 & \\
    & & 0
  \end{smallmatrix} \right), \quad \mathbf{e}_{23} = \left(
  \begin{smallmatrix}
    0 &  & \\
    & 0 & 1 \\
    & & 0
  \end{smallmatrix} \right), \quad \mathbf{e}_{13} = \left(
  \begin{smallmatrix}
    0 & 0 & 1\\
    & 0 & 0 \\
    & & 0
  \end{smallmatrix} \right), \nonumber\\
  \mathbf{f}_{21} = \left(
  \begin{smallmatrix}
    0 & & \\
    1 & 0 & \\
    & & 0
  \end{smallmatrix} \right), \quad
  \mathbf{f}_{23} = \left(
  \begin{smallmatrix}
    0 &  & \\
    & 0 & \\
    & 1 & 0
  \end{smallmatrix} \right), \quad
  \mathbf{f}_{13} = \left(
  \begin{smallmatrix}
    0 & & \\
    0 & 0 & \\
    1 & 0 & 0
  \end{smallmatrix} \right). \nonumber
\end{gather}
This matrix has determinant $3$.  Thus the form $\kappa_0$ induces a
bijective linear map $\iota_0: \spl_3(\lfi) \rightarrow
\Hom_\lfi(\spl_3(\lfi),\lfi)$, but becomes more intricate at the level
of $\lri$-lattices, due to the residue field characteristic $3$.
Indeed, the pre-image of $\Hom_\lri(\spl_3(\lri),\lri)$ under
$\iota_0$ is the $\lri$-lattice
$$
\lat := \iota_0^{-1}(\Hom_\lri(\spl_3(\lri),\lri)) = \bigcup_{u \in
  T_\lri(1)} \left( u (\tfrac{2}{3} \mathbf{h}_{12} + \tfrac{1}{3}
  \mathbf{h}_{23}) + \spl_3(\lri) \right).
$$ Thus we have $p\lat \leq \spl_3(\lri) \leq \lat$ with $\lvert
\spl_3(\lri) : p\lat \rvert = q^7$ and $\lvert \lat : \spl_3(\lri)
\rvert = q$.  We pull back the integral $\mathcal{Z}_\lri(r,t)$ over
$\mfp \times (\Hom_\lri(\spl_3(\lri),\lri))^*$ to an integral over
$\mfp \times \lat^*$, taking into account the Jacobi factor $\vert 3
\vert_\mfp = q^{-1}$.  Dividing the new region of integration with
respect to the second factor into cosets modulo $p\lat$, we write
\begin{equation}\label{equ:division}
  \mathcal{Z}_\lri(r,t) = \mathcal{S}_1(r,t) + \mathcal{S}_2(r,t),
\end{equation}
where the two summands correspond to the complementary subregions of
integration $\mfp \times (\spl_3(\lri) \setminus p\lat)$ and $\mfp
\times (\lat \setminus \spl_3(\lri))$ respectively.  We show in
Sections~\ref{sec:summand_1} and \ref{sec:summand_2} that these
summands are given by the formulae
\begin{equation}\label{equ:S1_and_S2}
  \begin{split}
    \mathcal{S}_1(r,t) & = (q^4+q^3-q-1) \mathcal{Z}_\lri^{[3]}(r,t) +
    (q^6-q^4-q^3+q) \mathcal{Z}_\lri^{[2]}(r,t) + (q^7 - q^6)
    \mathcal{Z}_\lri^{[0]}(r,t), \\
    \mathcal{S}_2(r,t) & = (q-1) ((q^2+q+1)q^2
    \mathcal{Z}_\lri^{[1]}(r,t) + (q^7 - (q^2+q+1)q^2)
    \mathcal{Z}_\lri^{[0]}(r,t)),
  \end{split}
\end{equation}
where
\begin{align*}
  \mathcal{Z}_\lri^{[0]}(r,t) & = \int_{(x,\mathbf{y}) \in \mfp \times
    \mfp^{(8)}} \lvert x \rvert^t \, d\mu(x,\mathbf{y}), \\
  \mathcal{Z}_\lri^{[1]}(r,t) & = \int_{(x,\mathbf{y}) \in \mfp \times
    \mfp^{(8)}} \lvert x \rvert^t \lVert \{y_1,
  y_2, y_3,x\} \rVert_\mfp^{2r} \, d\mu(x,\mathbf{y}), \\
  \mathcal{Z}_\lri^{[2]}(r,t) & = q^{-2r}   \mathcal{Z}_\lri^{[0]}(r,t), \\
  \mathcal{Z}_\lri^{[3]}(r,t) & = \int_{(x,\mathbf{y}) \in \mfp \times
    \mfp^{(8)}} \lvert x \rvert^t \lVert \{p y_1, p y_2, y_3,x\}
  \rVert_\mfp^{2r} \, d\mu(x,\mathbf{y}).
\end{align*}

In \cite[Section~6.1]{AvKlOnVo10+} it is shown that
\begin{equation}\label{equ:Z0}
  \mathcal{Z}_\lri^{[0]}(r,t) = \frac{q^{-9-t}(1 - q^{-1})}{1 - q^{-1-t}}
\end{equation}
and
\begin{equation}\label{equ:Z1}
  \mathcal{Z}_\lri^{[1]}(r,t) = \frac{q^{-9-2r-t} (1 - q^{-4-t})(1 -
    q^{-1})}{(1 - q^{-4-2r-t})(1 - q^{-1-t})}.
\end{equation}
The latter may be obtained from the formula
$$
\mathcal{Z}_\lri^{[1]}(r,t) = \sum_{(l,n) \in \N^2} (1 - q^{-1})
q^{-n} \Mass^{[1]}_l \, q^{-nt - 2 \min \{l,n\} r},
$$ 
with
\begin{equation}\label{equ:def_Mass^1}
  \Mass^{[1]}_l := \mu \left( \left\{ \mathbf{y} \in \mfp^{(8)}
      \mid \max \{ \lvert y_1 \rvert_\mfp, \lvert
      y_2 \rvert_\mfp, \lvert y_3 \rvert_\mfp \}
      = q^{-l} \right\} \right) = (1 - q^{-3}) q^{-3l-5},
\end{equation}
using the fact that
\begin{equation}\label{equ:geometric_series}
  \sum_{(l,n) \in \N^2} X_1^l X_2^n X_3^{\min\{l,n\}} = \frac{X_1 X_2
    X_3 (1 - X_1 X_2)}{(1 - X_1 X_2 X_3)(1 - X_1)(1 - X_2)}.
\end{equation}

Clearly, \eqref{equ:Z0} implies that
\begin{equation}\label{equ:Z2}
\mathcal{Z}_\lri^{[2]}(r,t) = \frac{q^{-9-2r-t}(1 - q^{-1})}{1 - q^{-1-t}}.
\end{equation}
This formula, too, may be written as a sum
$$
\mathcal{Z}_\lri^{[2]}(r,t) = \sum_{(l,n) \in \N^2} (1 - q^{-1})
q^{-n} \Mass^{[2]}_l \, q^{-nt - 2 \min \{l,n\} r},
$$ with
\begin{equation}\label{equ:def_Mass^2}
  \Mass^{[2]}_l := \mu \left( \left\{ \mathbf{y} \in
      \mfp^{(8)} \mid \vert p \rvert_\mfp=q^{-l} \right\} \right) = 
  \begin{cases}
    q^{-8} &\text{ if }l=1,\\
    0 &\text{ if }l\geq 2.
  \end{cases}
\end{equation}

It remains to compute $\mathcal{Z}_\lri^{[3]}(r,t)$.  We have
$$ \mathcal{Z}_{\lri}^{[3]}(r,t) = \sum_{(l,n) \in \N^2} (1 - q^{-1})
q^{-n} \Mass^{[3]}_l \, q^{-nt - 2 \min \{l,n\}r},
$$ where
\begin{align}
  \Mass^{[3]}_l & := \mu \left( \left\{ \mathbf{y} \in \mfp^{(8)} \mid
      \max \{ \lvert p y_1 \rvert_\mfp, \lvert p y_2 \rvert_\mfp,
      \lvert y_3 \rvert_\mfp \} = q^{-l} \right\} \right) \nonumber
      \\&=
  \begin{cases}
    (1-q^{-1}) q^{-8} & \text{if $l=1$,} \\
    (1-q^{-3}) q^{-3-3l} & \text{if $l \geq 2$.}
  \end{cases}\label{def: M_l}
\end{align}
Using \eqref{equ:geometric_series} this gives
\begin{align*}
  \mathcal{Z}_\lri^{[3]}(r,t) & = (1 - q^{-1}) (\Mass^{[3]}_1 -
  (1-q^{-3}) q^{-6}) \sum_{n \in \N} q^{-(1+t)n - 2 r} \\ & \quad + (1
  - q^{-1}) (1-q^{-3}) q^{-3} \sum_{(l,n) \in \N^2} q^{-3l - (1+t)n -
  2\min \{l,n\}r} \\ & = - (1 - q^{-1}) (1 - q^{-2}) q^{-7-2r-t} (1 -
  q^{-1-t})^{-1} \\ & \quad + (1 - q^{-1}) (1- q^{-4-t}) q^{-7-t-2r}
  (1- q^{-1-t})^{-1} (1- q^{-4-t-2r})^{-1} \\ & = \frac{(1- q^{-1})
  q^{-9-t-2r} \left( 1 - q^{-2-t} + q^{-2-t-2r} - q^{-4-t-2r}
  \right)}{(1 - q^{-1-t}) (1 - q^{-4-t-2r})} .
\end{align*}
A short computation, based on \eqref{equ:still}, \eqref{equ:division}
and \eqref{equ:S1_and_S2}, now yields the explicit formula for the
zeta function of $\SL_3^m(\lri)$, stated in Theorem~\ref{thm:SL3_p=3}.

The remainder of this section is devoted to an algebraic justification
of the equations~\eqref{equ:S1_and_S2}.  It would be interesting to
derive a geometric explanation, more similar to the argument in
\cite[Section~6.1]{AvKlOnVo10+} treating the generic case.

\subsection{} \label{sec:summand_1} First we will derive the formula
given for the summand $\mathcal{S}_1(r,t)$ in~\eqref{equ:S1_and_S2}.
For this we decompose $\spl_3(\lri) \setminus p\lat$ into cosets
modulo $p\lat$, or equivalently the finite Lie algebra
$\spl_3(\mathbb{F}_q)$ into cosets modulo its centre.  As $p=3$, the
centre of $\spl_3(\mathbb{F}_q)$ is the $1$-dimensional subalgebra
$\mathbb{F}_q \overline{\mathbf{z}}$, spanned by the reduction modulo
$\mfp$ of $\mathbf{z} := \mathbf{h}_{12} - \mathbf{h}_{23}$, viz.\ the
subalgebra of scalar matrices over $\F_q$.  An overview of the orbits
in $\spl_3(\F_q)$ under the adjoint action of $\GL_3(\mathbb{F}_q)$ is
provided in Table~\ref{table_p=3}; see Appendix~\ref{sec:aux_sl3_p=3}
for a short discussion.  The second column indicates whether the
corresponding elements are regular or irregular, as defined at the end
of Section~\ref{subsec:basic_set_up}.  The corresponding total number
of cosets modulo $\F_q \overline{\mathbf{z}}$, given in the last
column of the table, is obtained upon division by~$q$.  A short
calculation confirms that the sizes of the orbits listed in
Table~\ref{table_p=3} add up to $q^7 = \lvert \spl_3(\F_q) : \F_q
\overline{\mathbf{z}} \rvert$, as wanted.

\begin{table}[H]
  \centering
  \begin{tabular}{|c|l|l|l|l|}
    \hline
    type &  & no.\ of orbits & size of each orbit & total
    number modulo $\F_q \overline{\mathbf{z}}$ \\
    \hline\hline
    \caseA{} %\hfill ($\doteqdot 0$) 
&  & $q$ & $1$ & $1$ \\
    \caseC{} %\hfill ($\doteqdot 1$) 
& irregular\ & $q$ & $(q^3-1)(q^2-1)q$ &
    $(q^3-1)(q^2-1)q$ \\ 
    \caseB{} %\hfill ($\doteqdot 2$) 
& irregular\ & $q$ & $(q^3-1)(q+1)$ &
    $(q^3-1)(q+1)$ \\ 
    \caseF{} %\hfill ($\doteqdot 4$a) 
& regular\ & $(q-1)q/6$ &
    $(q^2+q+1)(q+1)q^3$ & $(q^3-1)(q+1)q^3/6$ \\
    \caseD{} %\hfill ($\doteqdot 4$b) 
& regular\ & $(q-1)q/2$ & $(q^3-1)q^3$ &
    $(q^3-1)(q-1)q^3/2$ \\ 
    \caseE{} %\hfill ($\doteqdot 4$c) 
& regular\ & $(q-1)q/3$ & $(q^2-1)(q-1)q^3$ &
    $(q^2-1)(q-1)^2q^3/3$ \\
    \hline
  \end{tabular}
  \smallskip
  \caption{Adjoint orbits in $\spl_3(\F_q)$ under the action of
    $\GL_3(\F_q)$, $q \equiv_3 0$}
  \label{table_p=3}
\end{table}

The equation for $\mathcal{S}_1(r,t)$ in \eqref{equ:S1_and_S2}
indicates that $\mathcal{Z}_\lri^{[0]}(r,t)$ is the correct integral for
the types \caseF{}, \caseD{}, \caseE{}, which cover the regular
elements modulo $p$.  It remains to link the contributions to the
summand $\mathcal{S}_1(r,t)$ by irregular elements belonging to cosets
of types \caseC{} and \caseB{} to the integrals
$\mathcal{Z}_\lri^{[2]}(r,t)$ and $\mathcal{Z}_\lri^{[3]}(r,t)$,
respectively.

\subsubsection{} Let us consider first elements belonging to cosets
modulo $p\lat$ of type~\caseB{} and work out the integral around such
elements which results from pulling the original integral
$\mathcal{Z}_\lri(r,t)$ over $\mfp \times
(\Hom_\lri(\spl_3(\lri),\lri))^*$ back to $\mfp \times \lat^*$.  A
typical coset of type~\caseB{} is $\mathbf{a}_0 + p\lat$, where
$\mathbf{a}_0 := \left( \begin{smallmatrix} 0 & 1 & 0 \\ 0 & 0 & 0 \\
    0 & 0 & 0
  \end{smallmatrix} \right)$, and each coset has measure $\mu(p\lat) =
q^{-7}$.  As indicated earlier, the determinant of the Jacobi matrix
associated to $\iota_0 : \lat \rightarrow
\Hom_\lri(\spl_3(\lri),\lri)$ is $3$ and thus contributes another
factor $\lvert 3 \rvert_\mfp = q^{-1}$.  The integral over $\mfp
\times (\mathbf{a} + p\lat)$ with Jacobi factor $q^{-1}$ can thus be
described as an integral $\mathcal{I}(r,t)$ over $\mfp \times
\mfp^{(8)}$.  We argue that it is equal to
$\mathcal{Z}_\lri^{[3]}(r,t)$, which may be computed from the integer
sequence $\Anzahl^{[3]}_n$, $n \in\N_0$, defined by
\begin{align*}
  \Anzahl^{[3]}_n & := \# \left\{\bfy + (\mfp^{n+1})^{(8)} \mid \bfy
    \in \mfp^{(8)} \text{ such that } \lVert \{ p y_1, p y_2, y_3,
    p^{n+1} \} \rVert _{\mfp}=q^{-n-1} \right\} \\ & \phantom{:}= \#
    \left\{ \bfy + (\mfp^{n+1})^{(8)} \mid \bfy \in\mfp^{(8)} \text{
    such that } y_1, y_2 \in \mfp^n \text{ and } y_3 \in \mfp^{n+1}
    \right\}\\ & \phantom{:}= \lvert \mfp^{\max\{1,n\}} : \mfp^{n+1}
    \rvert^2 \cdot \vert \mfp^{(5)} : (\mfp^{n+1})^{(5)} \vert \\ &
    \phantom{:}=
  \begin{cases} 
    1 & \text{ if } n=0,\\
    q^{5n+2} & \text{ if }n\geq 1,
  \end{cases}
\end{align*}
describing the lifting behaviour of points modulo $\mfp^{n+1}$ on the
variety defined by the integrand of
$\mathcal{Z}_\lri^{[3]}(r,t)$. Indeed, we observe that the numbers
$\Mass^{[3]}_l$ defined in~\eqref{def: M_l} satisfy
\begin{equation*}
\Mass^{[3]}_l = q^{-8l}\Anzahl^{[3]}_{l-1} - q^{-8(l+1)}\Anzahl^{[3]}_l.
\end{equation*}
The following proposition shows that the integral $\mathcal{I}(r,t)$
over $\mfp \times \mfp^{(8)}$ is equal to
$\mathcal{Z}_\lri^{[3]}(r,t)$.

\begin{pro} \label{pro:schwierigster_fall}
 For $n \in \N_0$ the set
  \begin{multline*}
    \Menge^{[3]}_n := \left\{ \mathbf{a} + p^{n+1}\lat \in \lat/p^{n+1}\lat \mid
      \mathbf{a} \equiv \left( \begin{smallmatrix} 0 & 1 & 0 \\ 0 & 0
          & 0 \\ 0 & 0 & 0 \end{smallmatrix} \right) \textnormal{ modulo
        $p\lat$} \right. \\ \left. \textnormal{and } \lvert \spl_3(\lri) :
      \Cen_{\spl_3(\lri)} (\mathbf{a} + p^{n+1}\lat) \rvert = q^{4(n+1)}
    \right\}
  \end{multline*}
has cardinality $\Anzahl_n^{[3]}$.
\end{pro}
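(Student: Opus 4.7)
The plan is to use the adjoint/coadjoint translation afforded by the normalised Killing form $\iota_0$ to convert the counting of cosets in $\Menge_n^{[3]}$ into a count of functionals classified by the elementary divisors of $\mathcal{R}(\mathbf{y})$, from which the answer can be read off essentially from the definition of $\Anzahl_n^{[3]}$. Concretely, for $\mathbf{a}=\mathbf{a}_0+p\mathbf{b}\in\lat$, the dual functional $\iota_0(\mathbf{a})\in\Hom_\lri(\spl_3(\lri),\lri)$ is computed from the structure matrix of $\kappa_0$ displayed earlier in this section, and its coordinates $\mathbf{y}(\mathbf{a})\in\lri^8$ in the dual of the basis $(\mathbf{h}_{12},\mathbf{h}_{23},\mathbf{e}_{12},\ldots,\mathbf{f}_{13})$ depend affinely on $\mathbf{b}$. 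Under this map the sub-coset $\mathbf{a}_0+p^{n+1}\lat$ pulls back to a fibre of the natural reduction $\lri^8\to(\lri/\mfp^{n+1})^8$.

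Since $\iota_0$ is $\SL_3(\lri)$-equivariant, the centraliser $\Cen_{\spl_3(\lri)}(\mathbf{a}+p^{n+1}\lat)$ agrees with the radical $\Rad(\omega)$ at level $n+1$, and Lemma~\ref{central_lem_neu} expresses $|\spl_3(\lri):\Cen_{\spl_3(\lri)}(\mathbf{a}+p^{n+1}\lat)|$ in terms of the elementary divisors of $\mathcal{R}(\mathbf{y}(\mathbf{a}))$ modulo $\mfp^{n+1}$. Combining this with the Witt normal form recalled above, the generic index $q^{4(n+1)}$ associated to a coset of type $\caseB{}$ is attained exactly when the two ``variable'' pairs of elementary divisors of $\mathcal{R}(\mathbf{y})$ are both divisible by $\mfp^{n+1}$. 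Working out this divisibility in terms of the minors $F_j(\mathbf{y})$ on the coset $\mathbf{a}_0+p\lat$ reduces it to $y_1,y_2\in\mfp^n$ and $y_3\in\mfp^{n+1}$, which are precisely the conditions defining~$\Anzahl_n^{[3]}$.

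With the identification in hand, the count follows directly. The case $n=0$ is the single coset $\mathbf{a}_0+p\lat$, whose reduction is a type~$\caseB{}$ element with centraliser of $\F_q$-dimension $4$ in $\spl_3(\F_q)$ by the discussion around Table~\ref{table_p=3}, yielding $|\Menge_0^{[3]}|=1=\Anzahl_0^{[3]}$. For $n\geq 1$, counting representatives modulo $\mfp^{n+1}$ with $y_1,y_2\in\mfp^n$, $y_3\in\mfp^{n+1}$ and the remaining five coordinates ranging freely in $\mfp/\mfp^{n+1}$ yields $q^{2}\cdot 1\cdot q^{5n}=q^{5n+2}=\Anzahl_n^{[3]}$, as claimed.

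The main obstacle is the precise coordinate matching at the heart of the second paragraph. In residue field characteristic $3$ the normalised Killing form becomes degenerate over $\lri$: its structure matrix has determinant $3$, forcing $\lat$ to be an index-$q$ overlattice of $\spl_3(\lri)$, and the coordinate transfer between $\mathbf{b}\in\lat$ and the $\mathbf{y}$-coordinates of the dual functional picks up scaling factors that must be tracked carefully across the commutator matrix $\mathcal{R}(\mathbf{Y})$. One must, in particular, verify that the three coordinates $y_1,y_2,y_3$ singled out in the integrand of $\mathcal{Z}_\lri^{[3]}$ are exactly the directions controlling the jump of the ``variable'' elementary divisors of $\mathcal{R}$ on points of $\mathbf{a}_0+p\lat$, rather than three other coordinates; it is this identification that matches the coset count with the analytic count $\Anzahl_n^{[3]}$ arising from the $\mfp$-adic integral.
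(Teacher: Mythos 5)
Your overall strategy---translate the adjoint centraliser condition into a statement about elementary divisors of $\mathcal{R}(\mathbf{y})$ via $\iota_0$ and Lemma~\ref{central_lem_neu}, then count---is a sensible framing, but the proposal asserts rather than proves the one step that constitutes the entire content of the proposition. The claim in your second paragraph, that ``working out this divisibility in terms of the minors $F_j(\mathbf{y})$ on the coset $\mathbf{a}_0+p\lat$ reduces it to $y_1,y_2\in\mfp^n$ and $y_3\in\mfp^{n+1}$,'' is precisely what has to be established, and nothing in the proposal establishes it; your closing paragraph concedes as much. Note also that the reduction cannot be read off from the Witt normal form recorded for $\mathbf{y}\in W(\lri)$: that normal form has a \emph{single} variable hyperbolic pair $p^a,p^a$ (not two), it applies to primitive functionals, and for $\mathbf{a}$ of type \caseB{} the dual $\iota_0(\mathbf{a})$ lives in the distorted lattice picture forced by $\det[\kappa_0]=3$, so the identification of three distinguished dual coordinates with the $y_1,y_2,y_3$ of the model integral $\mathcal{Z}_\lri^{[3]}$ is exactly the delicate characteristic-$3$ phenomenon at stake. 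Indeed the asymmetry between the conditions $y_1,y_2\in\mfp^n$ and $y_3\in\mfp^{n+1}$ has no visible source in your argument.

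The paper closes this gap by a genuinely different and much more hands-on route: an induction on $n$ that puts every $\mathbf{a}$ with $\mathbf{a}+p^{n+1}\lat\in\Menge^{[3]}_n$ into a normal form $\mathbf{b}_c(y_1,y_2,y_3,y_4)$ under $\GL_3^1(\lri)$-conjugation, writes out the eight congruence conditions on a generic $\mathbf{x}\in\spl_3(\lri)$ for $[\mathbf{b},\mathbf{x}]\equiv 0$ modulo $p^{n+1}\lat$, and identifies exactly when no extra restrictions beyond the generic four arise (forcing $y_1\equiv_p 0$ at each level, and $y_2\equiv_p y_3\equiv_p 0$ for the next lift, via the relation $0\equiv_p y_2x_3-y_3x_8$ that survives only because $p=3$ kills the factor $3$). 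The count $\lvert\Menge^{[3]}_n\rvert=\lvert T_\lri(n)\rvert\,\lvert T_\lri(1)\rvert^2 q^{4n}=q^{5n+2}$ then comes from the orbit/centraliser bookkeeping for these normal forms, not from a coordinate identification with the integrand of $\mathcal{Z}_\lri^{[3]}$. Your $n=0$ case and the arithmetic $q^2\cdot 1\cdot q^{5n}=q^{5n+2}$ merely re-derive the definition of $\Anzahl^{[3]}_n$; without the conjugation analysis the proof is incomplete.
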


\begin{proof}
  The case $n=0$ is a simple computation.  Indeed, the only candidate
  for an element of $\Menge^{[3]}_0$ is $\mathbf{a}_0 + p\lat$, where
  $\mathbf{a}_0 :=
  \left( \begin{smallmatrix} 0 & 1 & 0 \\
      0 & 0 & 0 \\ 0 & 0 & 0 \end{smallmatrix} \right)$, and a short
  computation reveals that, indeed,
  $$
  \lvert \spl_3(\lri) : \Cen_{\spl_3(\lri)} (\mathbf{a}_0 + p\lat) \rvert =
  \lvert \spl_3(\F_q) : \Cen_{\spl_3(\F_q)} (\overline{\mathbf{a}_0})
  \rvert = q^{4},
  $$ where $\overline{\mathbf{a}_0}$ denotes the image of
  $\mathbf{a}_0$ in $\spl_3(\F_q)$; cf.\ \eqref{equ:centr_type_2}.
  Thus $\lvert \Menge^{[3]}_0 \rvert = 1$, as claimed.

  Now suppose that $n \geq 1$.  Arguing by induction on $n$, we prove
  in fact a little more than stated in the proposition.  For any $l
  \in \N$, let $T_\lri(l)$ denote representatives for $\lri / \mfp^l$
  derived from the Teich\-m\"uller representatives for $\lri/\mfp$;
  see \eqref{equ:Teichmueller}.

  \smallskip

  \noindent
  \emph{Claim.} Every matrix $\mathbf{a} \in \lat$ with $\mathbf{a} +
  p^{n+1}\lat \in \Menge^{[3]}_n$ can be conjugated by elements of
  $\GL_3^1(\lri)$ to the `normal' form
  $$
  \begin{pmatrix}
    0 & 1 & 0 \\
    2p^2 c^2 & p c & 0 \\
    0 & 0 & -pc
  \end{pmatrix}
  + p^n
  \begin{pmatrix}
    0 & 0 & 0 \\
    0 & 0 & y_3 \\
    y_2 & 0 & 0
  \end{pmatrix} 
  \qquad \text{modulo $p^{n+1}\lat$,}
  $$ where $c \in T_\lri(n)$ and $y_2, y_3 \in T_\lri(1)$. These
  matrices modulo $p^{n+1} \lat$ form a complete set of
  representatives for the $\GL_3^1(\lri)$-orbits comprising
  $\Menge^{[3]}_n$.  Moreover, the index in $\GL_3^1(\lri)$ of the
  centraliser of any such matrix modulo $p^{n+1}\lat$ is $q^{4n}$, and
  thus $\lvert \Menge^{[3]}_n \rvert = \lvert T_\lri(n) \rvert \lvert T_\lri(1)
  \rvert^2 q^{4n} = q^{5n+2}$, as wanted.

  Finally, every matrix $\mathbf{a} \in \lat$ with $\mathbf{a} +
  p^{n+2}\lat \in \Menge^{[3]}_{n+1}$ can be conjugated by elements of
  $\GL_3^1(\lri)$ to a matrix which is, modulo $p^{n+1} \lat$, of the
  normal form above and satisfies the extra condition $y_2 = y_3 = 0$.

  \smallskip

  As indicated we use induction on $n$.  Let $c \in T_\lri(n-1)$ and put
  $$
  \mathbf{a}_c :=
  \begin{pmatrix}
    0 & 1 & 0 \\
    2p^2 c^2 & p c & 0 \\
    0 & 0 & -p c
  \end{pmatrix}.
  $$ The eigenvalues of $\mathbf{a}_c$ are $-p c$ and $2p c$ with
  multiplicities $2$ and $1$ respectively.  Hence $c$ is an invariant
  of the $\GL_3^1(\lri)$-orbit of $\mathbf{a}_c$ modulo
  $\spl_3^n(\lri)$.  In view of our discussion of the case $n=0$, if
  $n=1$, or the induction hypothesis, if $n>1$, it suffices to work
  out representatives of the elements of $\Menge^{[3]}_n$ within the
  set $\mathbf{a}_c + \spl_3^n(\lri)$ modulo $\spl_3^{n+1}(\lri)$.  We
  consider the set $\mathbf{a}_c + \spl_3^n(\lri)$ modulo
  $\spl_3^{n+1}(\lri)$, up to conjugation by $\GL_3^n(\lri)$.  Let
  \begin{equation} \label{equ:generic_matrix} \mathbf{x} :=
    \begin{pmatrix}
      x_1 & x_2 & x_3 \\
      x_4 & x_5 & x_6 \\
      x_7 & x_8 & x_9
    \end{pmatrix}
    \in \Mat_3(\lri).
  \end{equation}
  Then
  \begin{align}
    \left[ \mathbf{a}_c, \mathbf{x} \right] & =
    \begin{pmatrix}
      x_4 - 2p^2 c^2 x_2 & (x_5 - x_1) - pc x_2 & x_6 + pc x_3 \\
      pc(x_4 + 2pc(x_1 - x_5)) & -x_4 + 2p^2 c^2 x_2 & 2pc(x_6 +
      pc x_3) \\
      -pc (x_7 + 2pc x_8) & -x_7-2pc x_8 & 0
    \end{pmatrix} \label{equ:commutator_complete} \\
    & \equiv_{\mfp}
    \begin{pmatrix}
      x_4 & x_5-x_1 & x_6 \\
      0 & -x_4 & 0 \\
      0 & -x_7 & 0
    \end{pmatrix}. \label{equ:commutator_congruence}
  \end{align}
  If $\mathbf{b} = \mathbf{a}_c + p^n \mathbf{y} \in \mathbf{a}_c +
  \spl_3^n(\lri)$ and $g = 1 + p^n \mathbf{x} \in \GL_3^n(\lri)$, then
  \begin{align}
    g^{-1} \mathbf{b} g & \equiv (1-p^n \mathbf{x})(\mathbf{a}_c + p^n
    \mathbf{y})(1+p^n \mathbf{x})\nonumber \\
    & \equiv \mathbf{a}_c + p^n (\mathbf{y} +
    [\mathbf{a}_c,\mathbf{x}])\label{equ:conjugation}
  \end{align}
  modulo $\spl_3^{n+1}(\lri)$.  In view of
  \eqref{equ:commutator_congruence} this shows that the elements
  $$
  \mathbf{b}_c(y_1,y_2,y_3,y_4) := \mathbf{a}_c + p^n \begin{pmatrix}
    0 & 0 & 0 \\
    y_1 & y_4 & y_3 \\
    y_2 & 0 & -y_4
    \end{pmatrix}
  $$
  with $y_1,y_2,y_3, y_4 \in T_\lri(1)$ form a complete set of
  representatives for the $\GL_3^n(\lri)$-orbits of $\mathbf{a}_c +
  \spl_3^n(\lri)$ modulo $\spl_3^{n+1}(\lri)$, and indeed modulo
  $p^{n+1} \lat$.

  Consider one of these lifts, $\mathbf{b} =
  \mathbf{b}_c(y_1,y_2,y_3,y_4)$.  In order to simplify the notation,
  it is convenient to use the fact that $\mathbf{b} =
  \mathbf{b}_{\tilde c}(y_1,y_2,y_3,0)$, where $\tilde c := c +
  p^{n-1} y_4 \in T_\lri(n)$, and to work with
  $$
  \mathbf{a}_{\tilde c} :=
  \begin{pmatrix}
    0 & 1 & 0 \\
    2p^2 \tilde c^2 & p \tilde c & 0 \\
    0 & 0 & -p \tilde c
  \end{pmatrix}
  $$
  instead of $\mathbf{a}_c$.  In order to describe the centraliser
  index of $\mathbf{b} + p^{n+1}\lat$ in $\spl_3(\lri)$, we consider
  again a generic matrix $\mathbf{x}$ as in
  \eqref{equ:generic_matrix}, now with the additional restriction that
  $\mathbf{x} \in \spl_3(\lri)$, viz.\ $x_1 + x_5 + x_9 = 0$.  One
  computes
  \begin{multline*}
%    \begin{split}
    [\mathbf{b},\mathbf{x}] = [\mathbf{a}_{\tilde c},\mathbf{x}] + \\
    p^n
      \begin{pmatrix}
        -y_1 x_2 - y_2 x_3 & 0 & - y_3 x_2 \\
        y_1 (x_5-x_1) - y_2 x_6 + y_3 x_7 & y_1 x_2 + y_3 x_8 & y_1
        x_3 + y_3 (x_9-x_5) \\
        y_2 (x_1-x_9) - y_1 x_8 & y_2 x_2 & y_2 x_3 - y_3 x_8
      \end{pmatrix}.
%    \end{split}
  \end{multline*}
  Taking into account \eqref{equ:commutator_complete}, the condition
  $[\mathbf{b}, \mathbf{x}] \equiv 0$ modulo $p^{n+1} \lat$ can be
  expressed in terms of the following list of restrictions on the
  entries of $\mathbf{x}$, involving the parameters $y_1,y_2,y_3 \in
  T_\lri(1)$:
  \begin{enumerate}
  \item[(i)] $x_4 - 2 p^2 {\tilde c}^2 x_2 - p^n (y_1 x_2 + y_2 x_3)
    \equiv_{p^{n+1}} -x_4 + 2 p^2 {\tilde c}^2 x_2 + p^n (y_1 x_2 +
    y_3 x_8)$ from the $(1,1)$- and $(2,2)$-entries, equivalently $2x_4
    \equiv_{p^{n+1}} 4 p^2 {\tilde c}^2 x_2 + p^n (2y_1 x_2 + y_2 x_3 +
    y_3 x_8)$,
  \item[(ii)] $x_5 \equiv_{p^{n+1}} x_1 + p {\tilde c} x_2$ from
    the $(1,2)$-entry,
  \item[(iii)] $x_6 \equiv_{p^{n+1}} -p {\tilde c} x_3 + p^n y_3
    x_2$ from the $(1,3)$-entry,
  \item[(iv)] $x_7 \equiv_{p^{n+1}} -2 p {\tilde c} x_8 + p^n y_2
    x_2$ from the $(3,2)$-entry, \vspace{.2cm}
  \item[(v)] $0 \equiv_p - y_2 x_6 + y_3 x_7$ from the $(2,1)$-entry,
    but this condition becomes redundant if $x_6, x_7 \equiv_p 0$,
    \vspace{.2cm}
  \item[(vi)] $0 \equiv_p y_2 (x_1-x_9) - y_1 x_8 \equiv_p y_2 (x_1 +
    x_5 + x_9) - y_1 x_8 \equiv_p -y_1 x_8$ from the $(3,1)$-entry and
    (ii),
  \item[(vii)] $0 \equiv_p y_1 x_3 + y_3 (x_9-x_5) \equiv_p y_1 x_3 +
    y_3 (x_1 + x_5 + x_9) \equiv_p y_1 x_3$ from the $(2,3)$-entry and
    (ii),
  \item[(viii)] $-x_4 + 2 p^2 {\tilde c}^2 x_2 + p^n (y_1 x_2 + y_3 x_8)
    \equiv_{p^{n+1}} p^n (y_2 x_3 - y_3 x_8$) from the $(2,2)$- and
    $(3,3)$-entries, equivalently $x_4 \equiv_{p^{n+1}} 2 p^2 {\tilde c}^2
    x_2 + p^n (y_1 x_2 - y_2 x_3 + 2 y_3 x_8)$.
  \end{enumerate}
  If these conditions are to hold for $\mathbf{x} \in \spl_3(\lri)$,
  then the congruences (i)-(iv) show that the entries $x_4, x_5 , x_6,
  x_7$ are determined completely modulo $p^{n+1}$ by the remaining
  entries $x_1, x_2, x_3, x_8, x_9$.  Because $x_1 + x_5 + x_9 =
  \Tr(\mathbf{x}) = 0$, we can also think of $x_9$ as being determined
  by $x_1$.  We observe that $\lvert \spl_3(\lri) :
  \Cen_{\spl_3(\lri)} (\mathbf{b} + p^{n+1}\lat) \rvert = q^{4(n+1)}$
  (and not larger) if and only if one can choose $x_1, x_2, x_3, x_8$
  freely, i.e.\ if the remaining conditions (v)-(viii) do not impose
  extra restrictions.

  In fact, the congruence (v) will be automatically satisfied, as
  indicated, because $x_6, x_7 \equiv_p 0$.  As $x_1 + x_5 + x_9 =
  \Tr(\mathbf{x}) = 0$, the conditions (vi) and (vii) will give rise
  to restrictions on $x_8$ or $x_3$, unless $y_1 \equiv_p 0$.
  Finally, the last condition (viii) is equivalent to the condition
  (i), since $p=3$.

  The discussion so far shows that, with respect to the action of
  $\GL_3^n(\lri)$, the intersection of $\Menge^{[3]}_n$ and $(\mathbf{a}_c +
  \spl_3^n(\lri)) / p^{n+1}\lat$ consists of $q^3$ orbits, represented
  by matrices
  $$
  \mathbf{b}_c(0,y_2,y_3,y_4)
  $$ 
  and each of size $q^4$.  By induction, there are $q^{5(n-1)}$
  matrices modulo $p^{n-1} \lat$, represented by matrices such as
  $\mathbf{a}_c$, which lift to elements of $\Menge^{[3]}_n$.  Hence
  $\lvert \Menge^{[3]}_n \rvert = q^{5(n-1)} q^{3+4} = q^{5n+2}$.  In
  order to show that the $\lvert T_\lri(n-1) \rvert q^3 = q^{n+2}$
  matrices $\mathbf{b}_c(0,y_2,y_3,y_4)$ form a complete set of
  representatives for the $\GL_3^1(\lri)$-orbits comprising
  $\Menge^{[3]}_n$, it suffices to show that for any one of them,
  $\mathbf{b} = \mathbf{b}_c(0,y_2,y_3,y_4)$ say, one has
  \begin{equation} \label{equ:centr=q4n} \lvert \GL_3^1(\lri) :
    \Cen_{\GL_3^1(\lri)} (\bfb + p^{n+1}\lat) \rvert = q^{4n}.
  \end{equation}
  We make three observations.  Firstly, a straightforward translation
  between $\GL_3^1(\lri)$ and its Lie lattice $\gl_3^1(\lri)$ yields
  $$
  \lvert \GL_3^1(\lri) : \Cen_{\GL_3^1(\lri)} (\bfb + p^{n+1}\lat)
  \rvert = \lvert \gl_3^1(\lri) : \Cen_{\gl_3^1(\lri)} (\bfb +
  p^{n+1}\lat) \rvert.
  $$
  Secondly, we note that $\mathbf{c} :=
  \left( \begin{smallmatrix}0&&\\&0&\\&&p \end{smallmatrix} \right)
  \in \Cen_{\gl_3^1(\lri)}(\mathbf{b} + p^{n+1} \lat)$.  Since
  $\gl_3^1(\lri) = \lri \mathbf{c} \oplus \spl_3^1(\lri)$, this
  implies that $\gl_3^1(\lri) = \Cen_{\gl_3^1(\lri)}(\mathbf{b} +
  p^{n+1} \lat) + \spl_3^1(\lri)$, and consequently
  $$
  \lvert \gl_3^1(\lri) : \Cen_{\gl_3^1(\lri)} (\bfb + p^{n+1}\lat)
  \rvert = \lvert \spl_3^1(\lri) : \Cen_{\spl_3^1(\lri)} (\bfb +
  p^{n+1}\lat) \rvert.
  $$
  Finally, we observe that the property $\lvert \spl_3(\lri) :
  \Cen_{\spl_3(\lri)} (\bfb + p^{n+1}\lat) \rvert = q^{4(n+1)}$ is, in
  fact, equivalent to $\lvert \spl_3(\lri) : \Cen_{\spl_3(\lri)} (\bfb
  + p\lat) \rvert = q^4$ so that
  $$
  \vert \spl_3^1(\lri) : \Cen_{\spl_3^1(\lri)} (\bfb + p^{n+1}\lat)
  \vert = q^{-4} q^{4(n+1)} = q^{4n}.
  $$
  These three observations yield \eqref{equ:centr=q4n}, as wanted.

  To establish the last part of the induction claim, consider the
  relevance of the values of $y_2,y_3$ for lifting one of these
  matrices one step further.  Consider $\mathbf{b} =
  \mathbf{b}_c(y_1,y_2,y_3,y_4) = \mathbf{b}_{\tilde
    c}(y_1,y_2,y_3,0)$, where $\tilde c := c + p^{n-1} y_4 \in
  T_\lri(n+1)$, similarly as above, but allowing $y_1,\ldots,y_4 \in
  T_\lri(2)$.  Then the centraliser condition $[\mathbf{b},
  \mathbf{x}] \equiv 0$ modulo $p^{n+2} \lat$ leads, on the diagonal
  entries (cf.\ conditions (i) and (viii) in the list above), to the
  restriction
  \begin{align*}
    0 & \equiv_{p^{n+2}} \left( 4 p^2 {\tilde c}^2 x_2 + p^n (2y_1
      x_2 + y_2 x_3 + y_3 x_8) \right) - 2 \cdot \left( p^2 {\tilde c}^2 x_2
      - p^n ( y_1 x_2 - y_2 x_3 + 2 y_3 x_8) \right) \\
    & = p^n (3y_2 x_3 - 3 y_3 x_8)
  \end{align*}
  which is equivalent to $0 \equiv_p y_2 x_3 - y_3 x_8$, as $p=3$.
  Unless both $y_2$ and $y_3$ are congruent to $0$ modulo $p$, this
  leads to an unwanted restriction on $x_3$ and $x_8$.  This shows
  that it is enough to look for representatives of the elements of
  $\Menge^{[3]}_{n+1}$ within the sets $\mathbf{a}_{\tilde c} +
  \spl_3^{n+1}(\lri)$ modulo $\spl_3^{n+2}(\lri)$, where
  $\mathbf{a}_{\tilde c}$ arises from ${\tilde c} \equiv c$ modulo
  $p^{n-1}$.
\end{proof}

\subsubsection{} Next we consider elements belonging to cosets modulo
$p\lat$ of type \caseC{}.  Similarly as for type \caseB{}, we claim
that the relevant integral for type \caseC{} is equal to the
$\mathcal{Z}_\lri^{[2]}(r,t)$.  The latter may be computed easily from the
integer sequence $\Anzahl^{[2]}_n$, $n\in\N_0$, defined by
\begin{equation*}
  \Anzahl^{[2]}_n := \# \{ \bfy + (\mfp^{n+1})^{(8)} \mid \bfy \in
  \mfp^{(8)} \text{ such that } \lvert p \rvert_{\mfp} = 
  q^{-n-1}\}=
  \begin{cases}
    1 & \text{ if }n=0,\\
    0 & \text{ if }n \geq 1,
  \end{cases}
\end{equation*}
observing that the numbers $\Mass^{[2]}_l$ defined in
\eqref{equ:def_Mass^2} satisfy
\begin{equation*}
  \Mass^{[2]}_l = q^{-8l}\Anzahl^{[2]}_{l-1} - q^{-8(l+1)}\Anzahl^{[2]}_l.
\end{equation*}
The claim follows from the following proposition.
\begin{pro}
  For $n \in \N_0$ the set
  \begin{multline*}
    \Menge^{[2]}_n := \left\{ \mathbf{a} + p^{n+1}\lat \in
      \lat/p^{n+1}\lat \mid \mathbf{a} \equiv \left(
      \begin{smallmatrix} 0 & 1 & 0 \\ 0 & 0 & 1 \\ 0 & 0 & 0
      \end{smallmatrix} \right) \textnormal{ modulo $p\lat$} \right. \\
      \left. \textnormal{and } \lvert \spl_3(\lri) :
      \Cen_{\spl_3(\lri)} (\mathbf{a} + p^{n+1}\lat) \rvert =
      q^{4(n+1)} \right\}
  \end{multline*}
has cardinality $\Anzahl^{[2]}_n$.
\end{pro}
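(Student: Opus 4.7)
The proof follows the inductive strategy of Proposition~\ref{pro:schwierigster_fall}, but is substantially shorter because $\Anzahl^{[2]}_n$ takes only the values $1$ (for $n=0$) and $0$ (for $n\geq 1$). The two cases are treated separately.

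For $n=0$, there is a single candidate coset, namely $\mathbf{a}_0 + p\lat$ with $\mathbf{a}_0 = \mathbf{e}_{12} + \mathbf{e}_{23}$ (a regular nilpotent). The plan is to verify directly that it satisfies the centraliser condition by translating $[\mathbf{x},\mathbf{a}_0] \in p\lat$ into a linear congruence on $\mathbf{x}\in\spl_3(\lri)$ modulo $p$. This uses the fact that multiplication by $p=3$ carries $\tfrac{2}{3}\mathbf{h}_{12} + \tfrac{1}{3}\mathbf{h}_{23} \in \lat$ into an element of $\spl_3(\lri)$ whose image in $\spl_3(\F_q)$ is a scalar matrix, so that $\spl_3(\lri)/p\lat$ is naturally a quotient of $\spl_3(\F_q)$ by a one-dimensional scalar subspace. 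A short explicit computation in this quotient pins down the centraliser and shows that its index agrees with the claimed value $q^{4}$, so that $|\Menge^{[2]}_0| = 1$.

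For $n\geq 1$, the goal is to show that no lift $\mathbf{a}$ of $\mathbf{a}_0$ modulo $p^{n+1}\lat$ meets the required centraliser index. Following the template of the previous proof, one introduces a generic matrix $\mathbf{x}\in\gl_3^1(\lri)$ as in~\eqref{equ:generic_matrix}, computes $[\mathbf{a}_0,\mathbf{x}]$, and uses the conjugation formula~\eqref{equ:conjugation} to bring any such $\mathbf{a}$ into a normal form modulo $p^{n+1}\lat$. The crucial structural observation is that the kernel of $\ad(\mathbf{a}_0)$ in $\spl_3(\F_q)$ is the full associative centraliser $\F_q[\mathbf{a}_0]$; and because each element of $\F_q[\mathbf{a}_0]$ has automatic trace zero in characteristic~$3$, this centraliser is one dimension larger than in the generic characteristic. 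This excess dimension is inherited by every lift, forcing the centraliser index to strictly exceed the tight value $q^{4(n+1)}$; hence $\Menge^{[2]}_n$ is empty.

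The main obstacle is the induction step for $n\geq 1$: rigorously establishing that \emph{every} normal-form lift retains the characteristic-$3$ surplus dimension. In contrast with the partial-nilpotent case of Proposition~\ref{pro:schwierigster_fall}, where the trace constraint precisely trimmed one dimension of the centraliser at each step, here the trace imposes no condition on polynomials in $\mathbf{a}_0$, so the centraliser stays one dimension too large throughout the tower of lifts and the count $|\Menge^{[2]}_n|$ collapses to zero.
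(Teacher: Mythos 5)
Your treatment of the case $n=0$ is sound and coincides with the paper's: the unique candidate coset is $\mathbf{a}_0+p\lat$ with $\mathbf{a}_0$ the regular nilpotent, and translating $[\mathbf{a}_0,\mathbf{x}]\in p\lat$ into congruences modulo $p$ on $\mathbf{x}\in\spl_3(\lri)$ (i.e.\ asking that $[\overline{\mathbf{a}_0},\overline{\mathbf{x}}]$ be scalar in $\spl_3(\F_q)$) yields a centraliser of index exactly $q^4$, so $\lvert\Menge^{[2]}_0\rvert=1$.

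The case $n\geq 1$ contains a genuine gap, and the heuristic you offer in place of an argument is wrong in direction. You claim that the characteristic-$3$ surplus in the centraliser of $\overline{\mathbf{a}_0}$ (namely that $\F_q[\overline{\mathbf{a}_0}]$ lies entirely in $\spl_3(\F_q)$ because traces vanish) is ``inherited by every lift'', so that the centraliser stays one dimension too large. First, a persistently too-large centraliser would make the index \emph{smaller} than $q^{4(n+1)}$, whereas you assert it ``strictly exceeds'' that value: your mechanism and your conclusion point in opposite directions. Second, and decisively, the inheritance claim is false. The level-$0$ surplus is precisely what brings the index down to the required $q^4$; at level $1$ the situation reverses. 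Conjugating a lift by $\GL_3^1(\lri)$ into the normal form $\mathbf{b}=\mathbf{a}_0+p\,(y_1\mathbf{f}_{13}+y_2\mathbf{f}_{23})$ and spelling out $[\mathbf{b},\mathbf{x}]\equiv 0$ modulo $p^2\lat$, the diagonal entries force $x_4\equiv_p x_8\equiv_p 0$, and the $(1,2)$- and $(2,3)$-entries combined with $\Tr(\mathbf{x})=0$ and $p=3$ force one further congruence modulo $p$ on the coordinates $x_1,x_3$. These are extra constraints on coordinates that were free at level $0$, so the index is at least $q^{10}>q^{8}$ for every choice of $y_1,y_2$: the centraliser of every lift is too \emph{small}, not too large. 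Hence $\Menge^{[2]}_1=\varnothing$, and since membership in $\Menge^{[2]}_n$ forces the exact index condition already at level $2$, $\Menge^{[2]}_n=\varnothing$ for all $n\geq 1$. This explicit commutator computation at level $1$ is the substance of the proof; your proposal omits it, and the premise you substitute for it does not hold.
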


\begin{proof}
  The case $n=0$ is a simple computation.  The only candidate for an
  element of $\Menge^{[2]}_0$ is $\mathbf{a}_0 + p\lat$, where
  $\mathbf{a}_0 :=
  \left( \begin{smallmatrix} 0 & 1 & 0 \\
      0 & 0 & 1 \\ 0 & 0 & 0 \end{smallmatrix} \right)$, and a short
  computation reveals that
  $$
  \lvert \spl_3(\lri) : \Cen_{\spl_3(\lri)} (\mathbf{a}_0 + p\lat) \rvert =
  q^{4}.
  $$
  Indeed, for
  \begin{equation} \label{equ:generic_sl_matrix}
  \mathbf{x} =
    \begin{pmatrix}
      x_1 & x_2 & x_3 \\
      x_4 & x_5 & x_6 \\
      x_7 & x_8 & x_9
    \end{pmatrix}
    \in \spl_3(\lri)
  \end{equation}
  the commutator identity
  \begin{equation} \label{equ:comm_id} [ \mathbf{a}_0, \mathbf{x} ] =
    \begin{pmatrix}
      x_4 & x_5 - x_1 & x_6 - x_2 \\
      x_7 & x_8 - x_4 & x_9 - x_5 \\
      0 & -x_7 & -x_8
    \end{pmatrix}
  \end{equation}
  shows that $\mathbf{x} \in \Cen_{\spl_3(\lri)}(\mathbf{a}_0 +p\lat)$ if
  and only if the following congruences are satisfied:
  $$
  x_4 \equiv_p -x_8, \quad x_5 \equiv_p x_9 \equiv_p x_1, \quad x_6
  \equiv_p x_2, \quad x_7 \equiv_p 0,
  $$
  where $x_1, x_2, x_3, x_8$ can be chosen freely.  Thus $\lvert \Menge^{[2]}_0
  \rvert = 1$, as claimed.

  Next suppose that $n \geq 1$, and consider a lift $\mathbf{b} \in
  \mathbf{a}_0 + \spl_3^1(\lri)$ modulo $\spl_3^{2}(\lri)$.  Replacing
  $\mathbf{b}$ by a conjugate under $\GL_3^1(\lri)$ if necessary, we
  may assume, by a computation similar to~\eqref{equ:conjugation},
  that $\mathbf{b}$ is of the form
  $$
  \mathbf{b} = \mfb(y_1,y_2) = \mathbf{a}_0 +
  \begin{pmatrix}
    0 & 0 & 0 \\
    0 & 0 & 0 \\
    py_1 & py_2 & 0 
  \end{pmatrix} = 
  \begin{pmatrix}
    0 & 1 & 0 \\
    0 & 0 & 1 \\
    py_1 & py_2 & 0 
  \end{pmatrix},
  $$ where $y_1,y_2 \in T_\lri(1)$ are Teichm\"uller representatives
  in $\lri$.  Suppose that $\mathbf{x}$, as in
  \eqref{equ:generic_sl_matrix}, lies in
  $\Cen_{\spl_3(\lri)}(\mathbf{b} +p^2 \lat)$.  Then the commutator
  identity
  $$
  [ \mathbf{b}, \mathbf{x}] = [ \mathbf{a}_0, \mathbf{x}] +
  \begin{pmatrix}
    -py_1 x_3 & -py_2 x_3 & 0 \\
    -py_1 x_6 & -py_2 x_6 & 0 \\
    p(y_1(x_1 - x_9) + y_2 x_4) & p(y_1 x_2 + y_2(x_5 - x_9)) & p(y_1
    x_3 + y_2 x_6)
  \end{pmatrix},
  $$
  in conjunction with \eqref{equ:comm_id}, reveals that
  $$
  x_4 - py_1 x_3 \equiv_{p^2} (x_8 - x_4)-p y_2 x_6 \equiv_{p^2} - x_8
  + p(y_1 x_3 + y_2 x_6),
  $$
  hence $3x_4 \equiv_{p^2} 3x_8 \equiv_{p^2} 0$ irrespective of the
  particular values of $y_1,y_2$.  Furthermore, inspection of the
  $(1,2)$- and $(2,3)$-entries of the commutator identity shows that
  $x_1 \equiv_p x_5 \equiv_{p^2} x_9$.  Since $x_1 + x_5 + x_9
  \equiv_p 0$, this yields $x_1 \equiv_p x_5 \equiv_p x_9 \equiv_p 0$.
  As before, $x_4, x_5, x_6, x_7, x_9$ are determined by the values of
  $x_1, x_2, x_3, x_8$, but the latter satisfy the extra condition
  $x_1 \equiv_p x_8 \equiv_p 0$.  Hence the relevant index $\lvert
  \spl_3(\lri) : \Cen_{\spl_3(\lri)} (\mathbf{b} + p^2\lat) \rvert
  \geq q^{10}$.  This shows that $\Menge^{[2]}_1 = \varnothing$ and consequently
  $\Menge^{[2]}_n = \varnothing$ for all $n \geq 1$.
\end{proof}

\subsection{} \label{sec:summand_2} In order to conclude the
justification of the equations~\eqref{equ:S1_and_S2} we explain how
one obtains the summand $\mathcal{S}_2(r,t)$.  For this we decompose
$\lat \setminus \spl_3(\lri)$ into cosets modulo $p\lat$.  As $p=3$,
every element in this domain is of the form $u (p^{-1} \Id +
\mathbf{x})$, where $u \in \lri^*$ and $\mathbf{x} \in \gl_3(\lri)$
has trace $-1$.  We are thus led to decomposing the finite affine
space of matrices $\{ \mathbf{x} \in \gl_3(\mathbb{F}_q) \mid
\Tr(\mathbf{x}) = -1 \}$ into cosets modulo the $1$-dimensional
subspace $\mathbb{F}_q \overline{\mathbf{z}}$, spanned by the
reduction modulo $\mfp$ of $\mathbf{z} := \mathbf{h}_{12} -
\mathbf{h}_{23}$.  Of course, the latter coincides with the space of
scalar matrices over $\F_q$.  An overview of the orbits in $\{
\mathbf{x} \in \gl_3(\mathbb{F}_q) \mid \Tr(\mathbf{x}) = -1 \}$ under
the adjoint action of $\GL_3(\mathbb{F}_q)$ is provided in
Table~\ref{table_p=3_tr-1}; see Appendix~\ref{sec:aux_sl3_p=3} for a
short discussion.  The corresponding total number of cosets modulo
$\F_q \overline{\mathbf{z}}$, given in the last column of the table,
is obtained upon division by $q$.  Indeed, a short calculation
confirms that the sizes of the orbits listed in
Table~\ref{table_p=3_tr-1} add up to $q^7$ so that with $q-1$ choices
for $u$ modulo $\mfp$ we obtain $(q-1)q^7 = \lvert \lat : p\lat \rvert
- \lvert \spl_3(\lri) : p\lat \rvert$, as wanted.

\begin{table}[H]
  \centering
  \begin{tabular}{|c|l|l|l|l|}
    \hline
    type &  & no.\ of orbits & size of each orbit & total
    number modulo $\F_q \overline{\mathbf{z}}$ \\
    \hline\hline
    \caseG{} & irregular\ & $q$ & $(q^2+q+1)q^2$ &
    $(q^2+q+1)q^2$ \\ 
    \caseI{} & regular & $(q-3)q/6$ &
    $(q^2+q+1)(q+1)q^3$ & $(q-3)(q^2+q+1)(q+1)q^3/6$ \\
    \caseJ{} & regular & $(q-1)q/2$ &
    $(q^3-1)q^3$ & $(q^3-1)(q-1)q^3/2$ \\ 
    \caseK{}& regular & $q^2/3$ & $(q^2-1)(q-1)q^3$ &
    $(q^2-1)(q-1)q^4/3$ \\
    \caseH{}& regular & $q$ & $(q^3-1)(q+1)q^2$
    & $(q^3-1)(q+1)q^2$ 
    \\
    \hline
  \end{tabular}
  \caption{Adjoint orbits in $\{ \mathbf{x} \in \gl_3(\F_q) \mid
    \Tr(\mathbf{x})=-1 \}$ under the action of $\GL_3(\F_q)$, $q
    \equiv_3 0$}
  \label{table_p=3_tr-1}
\end{table}

That $\mathcal{Z}_\lri^{[0]}(r,t)$ is the correct integral for the
types \caseI{}, \caseJ{}, \caseK{}, \caseH{} is clear, because they
correspond to regular elements modulo $p$.  We need to link the
contributions by irregular elements belonging to cosets of type
\caseG{} to the integral $\mathcal{Z}_\lri^{[1]}(r,t)$, as shown
within the summand $\mathcal{S}_2(r,t)$ in \eqref{equ:S1_and_S2}.

Hence let us consider elements belonging to cosets modulo $p\lat$ of
type~\caseG{}.  A typical coset of this type is $\mathbf{a}_0 +
p\lat$, where $\mathbf{a}_0 := \left( \begin{smallmatrix} 0 & 0 & 0 \\
    0 & 0 & 0 \\ 0 & 0 & -1 \end{smallmatrix} \right)$, and each coset
has measure $\mu(p\lat) = q^{-7}$.  The determinant of the Jacobi
matrix associated to $\iota_0 : \lat \rightarrow
\Hom_\lri(\spl_3(\lri),\lri)$ contributes another factor $\lvert 3
\rvert_\mfp = q^{-1}$.  The integral over $\mfp \times (\mathbf{a} +
p\lat)$ with Jacobi factor $q^{-1}$ can thus be described as an
integral over $\mfp \times \mfp^{(8)}$. Similarly as in cases \caseC{}
and \caseB{}, we claim that this integral equals
$\mathcal{Z}_\lri^{[1]}(r,t)$. The latter may be computed from the
integer sequence $\Anzahl^{[1]}_n$, $n\in\N_0$, defined by
\begin{align*}
\Anzahl^{[1]}_n &:= \#\{\bfy + (\mfp^{n+1})^{(8)} \mid \bfy \in
\mfp^{(8)} \text{ such that }\lVert \{ y_1, y_2, y_3, p^{n+1} \}\rVert
_{\mfp}=q^{-n-1}\}\\ 
&= \#\{\bfy + (\mfp^{n+1})^{(8)} \mid \bfy \in
\mfp^{(8)} \text{ such that } y_1,y_2,y_3 \in \mfp^{n+1}\}\\
&= \vert
\mfp^{(5)}/(\mfp^{n+1})^{(5)} \vert\\
&= q^{5n},
\end{align*}
describing the lifting behaviour of points modulo $\mfp^{n+1}$ on the
variety defined by the integrand of
$\mathcal{Z}_\lri^{[1]}(r,t)$. Indeed, we observe that the numbers
$\Mass^{[1]}_l$ defined in \eqref{equ:def_Mass^1} satisfy
\begin{equation*}
\Mass^{[1]}_l = q^{-8l}\Anzahl^{[1]}_{l-1} -
q^{-8(l+1)}\Anzahl^{[1]}_l. 
\end{equation*}
The following proposition establishes the claim and thereby concludes
the overall proof of Theorem~\ref{thm:SL3_p=3}.

\begin{pro}
For $n \in \N_0$ the set
 \begin{multline*}
   \Menge^{[1]}_n := \left\{ \mathbf{a} + p^{n+1}\lat \in
     \lat/p^{n+1}\lat \mid \mathbf{a} \equiv p^{-1} \Id + \left(
     \begin{smallmatrix} 0 & 0 & 0 \\ 0 & 0 & 0 \\ 0 & 0 & -1
     \end{smallmatrix} \right) \textnormal{ modulo $p\lat$} \right. \\
     \left. \textnormal{and } \lvert \spl_3(\lri) :
     \Cen_{\spl_3(\lri)} (\mathbf{a} + p^{n+1}\lat) \rvert =
     q^{4(n+1)} \right\}
  \end{multline*}
has cardinality $\Anzahl^{[1]}_n$.
\end{pro}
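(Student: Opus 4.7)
The plan is to prove the proposition by induction on $n$, mirroring closely the argument for $\Menge^{[3]}_n$ in Proposition~\ref{pro:schwierigster_fall}.

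\emph{Base case} ($n=0$). The unique candidate is $\mathbf{a}_0 + p\lat$ with $\mathbf{a}_0 := p^{-1}\Id + \mathbf{h}$ and $\mathbf{h} := \mathrm{diag}(0,0,-1)$. Since $p^{-1}\Id$ is central in $\gl_3(\lfi)$, the centraliser of $\mathbf{a}_0 + p\lat$ in $\spl_3(\lri)$ equals that of $\mathbf{h} + p\lat$. The formula $[\mathbf{h}, \mathbf{x}]_{ij} = (d_i - d_j) x_{ij}$ with $d = (0,0,-1)$ identifies this centraliser as $\{\mathbf{x} \in \spl_3(\lri) \mid x_{13}, x_{23}, x_{31}, x_{32} \in \mfp\}$, a sublattice of $\F_q$-codimension $4$; so the index equals $q^4$ and $\lvert \Menge^{[1]}_0\rvert = 1 = \Anzahl^{[1]}_0$.

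\emph{Inductive step}. Given $\mathbf{b} + p^{n+1}\lat \in \Menge^{[1]}_n$, I consider the lifts $\mathbf{b}' := \mathbf{b} + p^{n+1}\mathbf{y}$ modulo $p^{n+2}\lat$, parametrised by $\mathbf{y} \in \lat/p\lat$, and identify those in $\Menge^{[1]}_{n+1}$. As in~\eqref{equ:conjugation}, conjugation by $g = 1 + p^{n+1}\mathbf{x} \in \GL_3^{n+1}(\lri)$ adds $[\mathbf{a}_0, \mathbf{x}]$ to $\mathbf{y}$ modulo $p\lat$, with image the $4$-dimensional $\F_q$-subspace spanned by $\mathbf{e}_{13}, \mathbf{e}_{23}, \mathbf{f}_{13}, \mathbf{f}_{23}$ in $\lat/p\lat$. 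A complementary $4$-dimensional subspace of block-diagonal elements---observing the relation $\bar{\mathbf{h}}_{12} \equiv \bar{\mathbf{h}}_{23}$ in $\lat/p\lat$ induced by $p\mathbf{v} = 2\mathbf{h}_{12} + \mathbf{h}_{23} \in p\lat$---yields $q^4$ normal-form lifts per base representative.

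\emph{Centraliser condition and finish}. The main technical task is to determine which of these normal-form lifts satisfy $\lvert \spl_3(\lri) : \Cen_{\spl_3(\lri)}(\mathbf{b}' + p^{n+2}\lat)\rvert = q^{4(n+2)}$. In the spirit of conditions~(i)--(viii) in the proof of Proposition~\ref{pro:schwierigster_fall}, one computes $[\mathbf{b}', \mathbf{z}] \pmod{p^{n+2}\lat}$ block by block for a generic $\mathbf{z} \in \spl_3(\lri)$: the off-block entries give invertible linear conditions forcing $z_{13}, z_{23}, z_{31}, z_{32}$ into $\mfp^{n+2}$, while the $2{\times}2$-block commutator reduces to $[\mathbf{M}_0, Z_1] \equiv 0 \pmod p$ for an element $\mathbf{M}_0 \in \spl_2(\F_q)$ built from the block-diagonal parameters of $\mathbf{y}$. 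Exact index $q^{4(n+2)}$ forces $\mathbf{M}_0 \equiv 0$, giving three linear constraints on the four block-diagonal parameters of $\mathbf{y}$---including the characteristic-$3$ coupling $\alpha \equiv -\delta/2 \pmod p$ between the $\mathbf{h}_{12}$-component $\alpha$ and the $\mathbf{v}$-component $\delta$---so that exactly $q$ normal-form lifts survive. Combined with the orbit-size identity $\lvert \GL_3^1(\lri) : \Cen_{\GL_3^1(\lri)}(\mathbf{b}+p^{n+1}\lat)\rvert = q^{4n}$ (proved as at the end of Proposition~\ref{pro:schwierigster_fall} via $\gl_3^1(\lri) = \lri \cdot (p\Id) \oplus \spl_3^1(\lri)$), this gives $\lvert \Menge^{[1]}_{n+1}\rvert = q^{n+1} \cdot q^{4(n+1)} = q^{5(n+1)} = \Anzahl^{[1]}_{n+1}$. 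The principal obstacle is the careful tracking of this characteristic-$3$ coupling through the inductive normal-form reduction.
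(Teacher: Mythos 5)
Your proposal follows essentially the same route as the paper: induction on $n$, the base case via the centraliser of $\mathrm{diag}(0,0,-1)$, and an inductive step that parametrises lifts modulo the image of the adjoint map (the $4$-dimensional off-block space), with the survivors cut out by requiring the upper-left $2\times 2$ block to be scalar — your coupling $\alpha\equiv-\delta/2$ is exactly the paper's condition ``$Y$ scalar'' combined with the relation $2\mathbf{h}_{12}+\mathbf{h}_{23}\in p\lat$, and all counts ($q$ survivors per representative, orbit size $q^{4(n+1)}$, total $q^{5(n+1)}$) agree. The only difference is bookkeeping: you work with the full $4$-dimensional block-diagonal complement in $\lat/p\lat$, whereas the paper splits this into a Teichm\"uller parameter $c\in T_\lri(n)$ plus a trace-zero $Y$.
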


\begin{proof}
  We argue by induction on $n$.  The case $n=0$ is a simple
  computation.  Indeed, the only candidate for an element of $\Menge^{[1]}_0$ is
  $\mathbf{a}_0 + p\lat$, where $\mathbf{a}_0 := p^{-1} \Id +
  \left( \begin{smallmatrix} 0 & 0 & 0 \\
      0 & 0 & 0 \\ 0 & 0 & -1 \end{smallmatrix} \right)$, and a short
  computation reveals that, indeed,
  $$
  \lvert \spl_3(\lri) : \Cen_{\spl_3(\lri)} (\mathbf{a}_0 + p\lat) \rvert =
  \lvert \spl_3(\F_q) : \Cen_{\spl_3(\F_q)} (\overline{\mathbf{a}_0})
  \rvert = q^{4},
  $$
  where $\overline{\mathbf{a}_0}$ denotes the image of $\mathbf{a}_0$
  in $\spl_3(\F_q)$; cf.\ \eqref{equ:centr_type_3}.  Thus $\lvert \Menge^{[1]}_0
  \rvert = 1$, as claimed.

  Now suppose that $n \geq 1$.  In fact, we will prove more than
  stated in the proposition.  For any $l \in \N$, let $T_\lri(l)$
  denote the representatives for $\lri / \mfp^l$ derived from the
  Teich\-m\"uller representatives for $\lri/\mfp$; see
  \eqref{equ:Teichmueller}.  By induction on $n$, the following
  assertions are proved below.

  \smallskip

  \noindent
  \emph{Claim.} Every matrix $\mathbf{a} \in \lat$ with $\mathbf{a} +
  p^{n+1}\lat \in \Menge^{[1]}_n$ can be conjugated by elements of
  $\GL_3^1(\lri)$ to the `normal' form
  $$
  p^{-1} \Id +
  \begin{pmatrix}
    c & 0 & 0 \\
    0 & c & 0 \\
    0 & 0 & -1 -2 c
  \end{pmatrix}
  \qquad \text{modulo $p^{n+1}\lat$,}
  $$ where $c \in T_\lri(n)$.  These matrices modulo $p^{n+1} \lat$
 form a complete set of representatives for the $\GL_3^1(\lri)$-orbits
 comprising $\Menge^{[1]}_n$.  Moreover, the index in $\GL_3^1(\lri)$
 of the centraliser of any such matrix modulo $p^{n+1}\lat$ is
 $q^{4n}$, and $\lvert \Menge^{[1]}_n \rvert = \lvert T_\lri(n) \rvert
 q^{4n} = q^{5n}$, as wanted.
  
  \smallskip

  Let $c \in T_\lri(n)$ and put
  $$
  \mathbf{a} := \bfa_c :=
  \begin{pmatrix}
    c & 0 & 0 \\
    0 & c & 0 \\
    0 & 0 & -1 - 2c
  \end{pmatrix}.
  $$ 
  Clearly, the eigenvalues of $\mathbf{a}$ are $c$ and $-1-2c$ with
  multiplicities $2$ and $1$ respectively.  Hence $c$ modulo $p^{n-1}$
  is an invariant of the $\GL_3^1(\lri)$-orbit of $\mathbf{a}$ modulo
  $p^n \lat$.  By induction, it suffices to look for representatives
  of the elements of $\Menge^{[1]}_n$ within the set $\mathbf{a} +
  \spl_3^n(\lri)$ modulo $p^{n+1} \lat$.  We consider the set
  $\mathbf{a} + \spl_3^n(\lri)$ modulo $\spl_3^{n+1}(\lri)$, up to
  conjugation by $\GL_3^n(\lri)$.  Let
  $$
  \mathbf{x} :=
  \begin{pmatrix}
    A & \underline{b}^{\text{t}} \\
    \underline{d} & a 
  \end{pmatrix}
  \in \Mat_3(\lri), \qquad \text{where $A \in \Mat_2(\lri)$ and
    $\underline{b}, \underline{d} \in \lri^{(2)}$, $a\in\lri$.}  
  $$
  Then, as $p=3$,
  \begin{align}
    \left[ \mathbf{a}, \mathbf{x} \right] & =
    \begin{pmatrix}
       0 & (1+3c) \underline{b}^{\text{t}} \\
      -(1+3c)\underline{d} & 0
    \end{pmatrix} \label{equ:commutator_complete_tr-1} \\
    & \equiv_{\mfp}
    \begin{pmatrix}
       0 & \underline{b}^{\text{t}} \\
      \underline{d} & 0
    \end{pmatrix}. \label{equ:commutator_congruence_tr-1}
  \end{align}
  As in the proof of Proposition~\ref{pro:schwierigster_fall}, the
  congruence \eqref{equ:commutator_congruence_tr-1} shows that the
  elements
  $$
  \mathbf{b}_c(Y) := \mathbf{a} + p^n \begin{pmatrix}
    Y & 0 \\
    0 & -\Tr(Y)
    \end{pmatrix},
  \quad \text{with $Y = \begin{pmatrix} y_1 & y_2 \\ y_3 & y_4
    \end{pmatrix} \in \Mat_2(T_\lri(1))$,} 
  $$ form a complete set of representatives for the
  $\GL_3^n(\lri)$-orbits of $\mathbf{a} + \spl_3^n(\lri)$ modulo
  $\spl_3^{n+1}(\lri)$, and with the extra restriction $\Tr(Y) =0$ a
  complete set of representatives modulo $p^{n+1} \lat$.

  Consider one of these lifts, $\mathbf{b} = \mathbf{b}_c(Y)$ and put
  $z := \Tr(Y)$.  In order to describe the centraliser index of
  $\mathbf{b} + p^{n+1}\lat$ in $\spl_3(\lri)$, we consider a generic
  matrix
  $$
  \qquad \mathbf{x} :=
  \begin{pmatrix}
    A & \underline{b}^{\text{t}} \\
    \underline{d} & -\Tr(A) 
  \end{pmatrix}
  \in \spl_3(\lri), \qquad \text{where $A \in \Mat_2(\lri)$ and
    $\underline{b}, \underline{d} \in \lri^{(2)}$}
  $$
  and compute
  $$
  [\mathbf{b},\mathbf{x}] = [\mathbf{a},\mathbf{x}] + p^n
  \begin{pmatrix}
     YA - AY & Y \underline{b}^{\text{t}} - z \underline{b}^{\text{t}} \\
     z \underline{d} - \underline{d} Y & 0
  \end{pmatrix}.
  $$
  Taking into account \eqref{equ:commutator_complete_tr-1}, the
  condition $[\mathbf{b}, \mathbf{x}] \equiv 0$ modulo $p^{n+1} \lat$ can
  be expressed in terms of the following list of restrictions on the
  entries of $\mathbf{x}$, involving as parameters the matrix $Y \in
  \Mat_2(T_\lri(1))$ and $z = \Tr(Y)$:
  \begin{enumerate}
  \item[(i)] $YA - AY \equiv_p 0$,
  \item[(ii)] $((1 + 3c - p^nz) \Id + p^n Y) \underline{b}^{\text{t}}
    \equiv_{p^{n+1}} 0$,
  \item[(iii)] $\underline{d} ((1 + 3c - p^nz) \Id + p^n Y)
    \equiv_{p^{n+1}} 0$.
  \end{enumerate}
  If these conditions are to hold then the last two congruences show
  that $\underline{b}$ and $\underline{d}$ are to be $0$ modulo
  $p^{n+1}$.  From this we observe that $\lvert \spl_3(\lri) :
  \Cen_{\spl_3(\lri)} (\mathbf{a} + p^{n+1}\lat) \rvert = q^{4(n+1)}$
  (and not larger) if and only if one can choose $A$ freely, i.e.\ if
  the first condition does not impose extra restrictions.  This
  implies that $Y$ is scalar, and $\Tr(Y) = 0$ implies that $Y=0$.
  The proof concludes in analogy to the proof of
  Proposition~\ref{pro:schwierigster_fall}
\end{proof}

%%%%%%%%%

%\part*{Appendices}

\appendix

\section{Adjoint action of $\GL_2(\F_q)$ on $\spl_2(\F_q)$}
\label{sec:aux_sl2}

In Section~\ref{sec:full_sl_2}, we require an overview of the elements
in $\spl_2(\mathbb{F}_q)$ up to conjugacy under the group
$\GL_2(\F_q)$.  We distinguish four different types, labelled $0$,
$1$, $2$a,~$2$b.  The total number of elements of each type and the
isomorphism types of their centralisers in $\SL_2(\F_q)$ are
summarised in Tables~\ref{table_sl2_1} and \ref{table_sl2_2}.  We
briefly discuss the four different types.

Type $0$ consists of the zero matrix, which does not feature in our
calculation but is shown for completeness.  Its centraliser is the
entire group $\SL_2(\F_q)$.

Type $1$ consists of nilpotent matrices with minimal polynomial equal
to $X^2$ over $\F_q$.  The centraliser of a typical element is
$$
\Cen_{\SL_2(\F_q)} \left( \Bigl(\begin{smallmatrix} 0 & 1 \\ 0 & 0
  \end{smallmatrix} \Bigr) \right) = \left\{ \Bigl(
  \begin{smallmatrix} a & b \\ 0 & a \end{smallmatrix} \Bigr) \mid a,b
  \in \F_q, a^2 = 1 \right\}
$$
and matrices of type $1$ are regular.

Type $2$a consists of semisimple matrices with distinct eigenvalues
$\lambda, -\lambda \in \F_q \setminus \{0\}$.  The minimal polynomial
of such elements over $\F_q$ is equal to $X^2-\lambda^2$.  The
centraliser of a typical element is
$$
\Cen_{\SL_2(\F_q)} \left( \Bigl(\begin{smallmatrix} \lambda & 0 \\ 0 &
    -\lambda \end{smallmatrix} \Bigr) \right) = \left\{ \Bigl(
  \begin{smallmatrix} a & 0 \\ 0 & b \end{smallmatrix} \Bigr) \mid a,b
  \in \F_q, ab = 1 \right\}
$$
and matrices of type $2$a are regular.

Type $2$b consists of semisimple matrices with eigenvalues $\lambda,
\lambda^q \in \F_{q^2} \setminus \F_q$.  The minimal polynomial of
such elements over $\F_q$ is equal to $(X-\lambda)(X-\lambda^q)$.  The
centraliser of a typical element is isomorphic to the group of
elements of norm $1$ in the field $\F_{q^2}$ and matrices of type $2$b
are regular.

%%%%%

\section{Adjoint action of $\GL_3(\F_q)$ on $\spl_3(\F_q)$}
\label{sec:aux_sl3}

This appendix is almost identical to Appendix~B in \cite{AvKlOnVo10+}
and included for the reader's convenience.  Let $\F_q$ be a finite
field of characteristic not equal to $3$.  We give an overview of the
elements in $\spl_3(\F_q)$ up to conjugacy under the group
$\GL_3(\F_q)$.  For this we distinguish eight different types,
labelled $0$, $1$, $2$, $3$, $4$a, $4$b, $4$c,~$5$.  The total number
of elements of each type and the isomorphism types of their
centralisers in $\SL_3(\F_q)$ are summarised in Tables~7.1 and 7.2
in~\cite{AvKlOnVo10+}.  We briefly discuss the eight different types.

Type $0$ consists of the zero matrix, which does not feature in our
calculation but is shown for completeness.  Its centraliser is the
entire group $\SL_3(\F_q)$.

Type $1$ consists of nilpotent matrices with minimal polynomial equal
to $X^3$ over~$\F_q$.  The centraliser of a typical element is
\begin{equation}\label{equ:centr_type_1}
  \Cen_{\SL_3(\F_q)} \left( \Bigl(\begin{smallmatrix} 0 & 1 & 0 \\ 0 & 0 & 1
      \\ 0 & 0 & 0 \end{smallmatrix} \Bigr) \right) = \left\{
    \Bigl( \begin{smallmatrix} a & b & c \\ 0 & a & b \\ 0 & 0 &
      a \end{smallmatrix} \Bigr) \in \GL_3(\F_q) \mid  a^3 = 1 \right\}
\end{equation}
and matrices of type $1$ are regular.

Type $2$ consists of nilpotent matrices with minimal polynomial equal
to $X^2$ over~$\F_q$.  The centraliser of a typical element is
\begin{equation}\label{equ:centr_type_2}
  \Cen_{\SL_3(\F_q)} \left( \Bigl(\begin{smallmatrix} 0 & 1 & 0 \\ 0 & 0 & 0
      \\ 0 & 0 & 0 \end{smallmatrix} \Bigr) \right) = \left\{
    \Bigl( \begin{smallmatrix} a & b & c \\ 0 & a & 0 \\ 0 & d &
      e \end{smallmatrix} \Bigr) \in \GL_3(\F_q) \mid a^2 e = 1 \right\}
\end{equation}
and matrices of type $2$ are irregular.

Type $3$ consists of semisimple matrices with eigenvalues $\lambda \in
\F_q \setminus \{0\}$ of multiplicity $2$ and $\mu := -2\lambda$.  The
minimal polynomial of such elements over $\F_q$ is equal to
$(X-\lambda)(X-\mu)$.  The centraliser of a typical element is
\begin{equation}\label{equ:centr_type_3}
  \Cen_{\SL_3(\F_q)} \left( \Bigl(\begin{smallmatrix} \lambda & 0 & 0 \\ 0 &
      \lambda & 0 \\ 0 & 0 & \mu \end{smallmatrix} \Bigr) \right) =
  \left\{ \Bigl( \begin{smallmatrix} a & b & 0 \\ c & d & 0 \\ 0 & 0 &
      e \end{smallmatrix} \Bigr) \in \GL_3(\F_q) \mid (ad-bc) e = 1
  \right\}
\end{equation}
and matrices of type $3$ are irregular.

Type $4$a consists of semisimple matrices with distinct eigenvalues
$\lambda,\mu,\nu := -\lambda-\mu \in \F_q \setminus \{0\}$.  The
minimal polynomial of such elements over $\F_q$ is equal to
$(X-\lambda)(X-\mu)(X-\nu)$.  The centraliser of a typical element is
\begin{equation}\label{equ:centr_type_4a}
\Cen_{\SL_3(\F_q)} \left( \Bigl(\begin{smallmatrix} \lambda & 0 & 0 \\
    0 & \mu & 0 \\ 0 & 0 & \nu \end{smallmatrix} \Bigr) \right) =
\left\{ \Bigl( \begin{smallmatrix} a & 0 & 0 \\ 0 & b & 0 \\ 0 & 0 &
    c \end{smallmatrix} \Bigr) \in \GL_3(\F_q) \mid abc = 1 \right\}
\end{equation}
and matrices of type $4$a are regular.

Type $4$b consists of semisimple matrices with eigenvalues
$\lambda,\mu := \lambda^q \in \F_{q^2} \setminus \F_q$ and $\nu :=
-\lambda-\mu \in \F_q$.  The minimal polynomial of such elements over
$\F_q$ is equal to $(X-\lambda)(X-\mu)(X-\nu)$.  The centraliser of a
typical element is isomorphic to the multiplicative group of the field
$\F_{q^2}$ and matrices of type $4$b are regular.

Type $4$c consists of semisimple matrices with eigenvalues
$\lambda,\mu := \lambda^q, \nu := \lambda^{q^2} \in \F_{q^3} \setminus
\F_q$ with $\lambda + \mu + \nu = 0$.  The minimal polynomial of such
elements over $\F_q$ is equal to $(X-\lambda)(X-\mu)(X-\nu)$.  The
centraliser of a typical element is isomorphic to the group of
elements of norm $1$ in the field $\F_{q^3}$ and matrices of type $4$c
are regular.

Type $5$ consists of matrices with eigenvalues $\lambda \in \F_q
\setminus \{0\}$ of multiplicity $2$ and $\mu := -2\lambda$.  The
minimal polynomial of such elements over $\F_q$ is equal to
$(X-\lambda)^2(X-\mu)$.  The centraliser of a typical element is
\begin{equation}\label{equ:centr_type_5}
  \Cen_{\SL_3(\F_q)} \left( \Bigl(\begin{smallmatrix} \lambda & 1 & 0 \\ 0 &
      \lambda & 0 \\ 0 & 0 & \mu \end{smallmatrix} \Bigr) \right) =
  \left\{ \Bigl( \begin{smallmatrix} a & b & 0 \\ 0 & a & 0 \\ 0 & 0 &
      c \end{smallmatrix} \Bigr) \in \GL_3(\F_q) \mid a^2 c = 1 \right\}
\end{equation}
and matrices of type $5$ are regular.

\section{Auxiliary results regarding $\spl_3(\F_q)$ and $\gl_3(\F_q)$
  for $q \equiv_3 0$}
\label{sec:aux_sl3_p=3}

\subsection{} Let $\F_q$ be a finite field of characteristic equal to
$3$.  In Section~\ref{sec:sl3_Z3}, we require an overview of the
elements in $\spl_3(\F_q)$ up to conjugacy under the group
$\GL_3(\F_q)$.  We distinguish six different types, labelled \caseA{},
\caseC{}, \caseB{}, \caseF{}, \caseD{} and \caseE{}, corresponding to
the types $0$, $1$, $2$, $4$a, $4$b and $4$c in the generic case
($p\neq 3$); cf.~Appendix~\ref{sec:aux_sl3}. There are no analogues to
types $3$ and $5$ in characteristic $3$.  The total number of elements
of each type, number of orbits and orbit sizes, are summarised in
Table~\ref{table_p=3}.  We briefly discuss the six different types.

Type \caseA{} (corresponding to type $0$ in the generic case) consists
of the scalar matrices.  This type is listed for completeness and does
not feature in our calculation.

Type \caseC{} (corresponding to type $1$) consists of matrices with
minimal polynomial equal to $(X-\lambda)^3$ over~$\F_q$.  There are
$q$ possible values for $\lambda \in \F_q$, hence $q$ orbits.  The
orbit sizes are as in the generic case; see \eqref{equ:centr_type_1}.
In contrast to the generic case, matrices of type \caseC{} are
irregular in characteristic $3$.

Type \caseB{} (corresponding to type $2$) consists of matrices with
minimal polynomial equal to $(X-\lambda)^2$ over~$\F_q$.  There are
$q$ possible values for $\lambda \in \F_q$, hence $q$ orbits.  The
orbit sizes are as in the generic case; see \eqref{equ:centr_type_2}.
In contrast to the generic case, matrices of type \caseC{} are
irregular in characteristic $3$.

Type \caseF{} (corresponding to type $4$a) consists of matrices with
minimal polynomial equal to $(X-\lambda)(X-\mu)(X-\nu)$ over~$\F_q$,
with distinct $\lambda, \mu, \nu$ such that $\lambda + \mu + \nu = 0$.
There are $(q-1)q/6$ possible choices for $\{\lambda, \mu, \nu\}$,
hence the same number of orbits.  The orbit sizes are as in the
generic case; see \eqref{equ:centr_type_4a}.  As in the generic case,
matrices of type \caseF{} are regular.

Type \caseD{} (corresponding to type $4$b) consists of semisimple
matrices with eigenvalues $\lambda,\mu := \lambda^q \in \F_{q^2}
\setminus \F_q$ and $\nu := -\lambda-\mu \in \F_q$.  The number of
orbits and the orbit sizes are exactly as in the generic case; see
Appendix~\ref{sec:aux_sl3}.  Matrices of type \caseD{} are regular.

Type \caseE{} (corresponding to type $4$c) consists of semisimple
matrices with eigenvalues $\lambda,\mu := \lambda^q, \nu :=
\lambda^{q^2} \in \F_{q^3} \setminus \F_q$ with $\lambda + \mu + \nu =
0$.  In characteristic $3$, the number of elements in $\F_{q^3}
\setminus \F_q$ with trace $0$ in $\F_q$ is $q^2-q$.  Thus there are
$(q-1)q/3$ orbits.  The orbit sizes are as in the generic case; see
Appendix~\ref{sec:aux_sl3}.  Matrices of type \caseE{} are regular.

\subsection{} Let $\F_q$ be a finite field of characteristic equal to
$3$.  In Section~\ref{sec:sl3_Z3}, we also require an overview of the
elements in $\{ \mathbf{x} + \F_q \overline{\mathbf{z}} \mid
\mathbf{x} \in \gl_3(\F_q) \text{ with } \Tr(\mathbf{x})=-1 \}$ up to
conjugacy under the group $\GL_3(\F_q)$.  We distinguish five
different types, labelled \caseG{}, \caseI{}, \caseJ{}, \caseK{} and
\caseH{}, corresponding to the types $3$, $4$a, $4$b, $4$c and $5$ in
the generic case ($p\neq3$); cf.~Appendix~\ref{sec:aux_sl3}. There are
no analogues to types 1 and 2 in characteristic~$3$. The total number
of elements of each type, number of orbits and orbit sizes, are
summarised in Table~\ref{table_p=3_tr-1}.  We briefly discuss the five
different types.

Type \caseG{} (analogous to type $3$ in the generic case) consists of
semisimple matrices with eigenvalues $\lambda \in \F_q$ of
multiplicity $2$ and $\mu := \lambda-1$.  The minimal polynomial of
such an element over $\F_q$ is equal to $(X-\lambda) (X-\mu)$.  There
are $q$ possible values for $\lambda$, hence $q$ orbits.  The orbit
sizes are as in the generic case; see \eqref{equ:centr_type_3}.  As in
the generic case, matrices of type \caseG{} are irregular.

Type \caseI{} (analogous to type $4$a) consists of matrices with
minimal polynomial equal to $(X-\lambda)(X-\mu)(X-\nu)$ over~$\F_q$,
with distinct $\lambda, \mu, \nu$ such that $\lambda + \mu + \nu =
-1$.  One checks that there are $(q-3)q/6$ possible choices for
$\{\lambda, \mu, \nu\}$, hence the same number of orbits.  The orbit
sizes are as in the generic case; see \eqref{equ:centr_type_4a}.  As
in the generic case, matrices of type \caseI{} are regular.

Type \caseJ{} (analogous to type $4$b) consists of semisimple matrices
with eigenvalues $\lambda,\mu := \lambda^q \in \F_{q^2} \setminus
\F_q$ and $\nu := -\lambda-\mu-1 \in \F_q$.  The number of orbits and
the orbit sizes are exactly as in the generic case; see
Appendix~\ref{sec:aux_sl3}.  Matrices of type \caseJ{} are regular.

Type \caseK{} (analogous to type $4$c) consists of semisimple matrices
with eigenvalues $\lambda,\mu := \lambda^q, \nu := \lambda^{q^2} \in
\F_{q^3} \setminus \F_q$ with $\lambda + \mu + \nu = -1$.  In
characteristic $3$, the number of elements in $\F_{q^3} \setminus
\F_q$ with trace $-1$ in $\F_q$ is $q^2$.  Thus there are $q^2/3$
orbits.  The orbit sizes are as in the generic case; see
Appendix~\ref{sec:aux_sl3}.  Matrices of type \caseK{} are regular.

Type \caseH{} (analogous to type $5$) consists of matrices with
eigenvalues $\lambda \in \F_q$ of multiplicity $2$ and $\mu :=
\lambda-1$.  The minimal polynomial of such an element over $\F_q$ is
equal to $(X-\lambda)^2 (X-\mu)$.  There are $q$ possible values for
$\lambda$, hence $q$ orbits.  The orbit sizes are as in the generic
case; see \eqref{equ:centr_type_5}.  As in the generic case, matrices
of type \caseH{} are irregular.
 
%%%%%

\bigskip

\begin{acknowledgements}
 The authors would like to thank Alexander Lubotzky as well as the
 following institutions: the Batsheva de Rothschild Fund for the
 Advancement of Science, the EPSRC, the Mathematisches
 Forschungsinstitut Oberwolfach, the National Science Foundation and
 the Nuffield Foundation.
\end{acknowledgements}

%%%%%

\end{document}